\documentclass[10pt,a4paper,reqno]{amsart}
\usepackage[utf8]{inputenc}
\usepackage{amsmath}
\usepackage{mathpazo}
\usepackage{enumitem}
\usepackage{tikz-cd}
\usepackage{amsthm}
\usepackage{indentfirst}
\usepackage{amsfonts}
\usepackage{tikz}
\usepackage{verbatim}
\usepackage{enumitem}
\usepackage{amssymb}
\usepackage[hidelinks,linktocpage]{hyperref}
\hypersetup{
	colorlinks   = true, %Colours links instead of ugly boxes
	urlcolor     = blue, %Colour for external hyperlinks
	linkcolor    = blue, %Colour of internal links
	citecolor   = green %Colour of citations
}
\usepackage{array}
\usepackage{stmaryrd}
\usepackage{dsfont}
\usepackage{graphicx} 
\usepackage[left=2.5cm,right=2.5cm,top=3cm,bottom=3cm]{geometry}
\usepackage{color}
\usepackage[all]{xy}
\usepackage[english, french]{babel}
\frenchbsetup{StandardLists=true}
\usepackage{mathrsfs}
\theoremstyle{plain}
\newtheorem{prop}{Proposition}[section]
\newtheorem{lem}[prop]{Lemma}
\newtheorem*{lem*}{Lemma}
\newtheorem{thm}[prop]{Theorem}

\newtheorem{cor}[prop]{Corollary}
\theoremstyle{definition}
\newtheorem{defi}[prop]{Definition}
\newtheorem{ex}[prop]{Example}
\theoremstyle{remark}

\setlength{\parindent}{15pt}

\newcommand{\qbin}[2]{\left[\begin{smallmatrix} #1 \\ #2 \end{smallmatrix} \right]}

\newcommand{\s}{\mathfrak S}
\newcommand{\cL}{\mathcal{L}}
\newcommand{\U}{\mathcal U}

\numberwithin{equation}{section}
\setcounter{tocdepth}{1}
\begin{document}

\selectlanguage{english}

	\title{Faithfulness of simple 2-representations of $\mathfrak{sl}_2$}
	
	\author{Laurent Vera}
	
	\maketitle
	
	\begin{abstract}
	Let $\mathcal U$ be the 2-category associated with $\mathfrak{sl}_2$. We prove that a complex of 1-morphisms of $\mathcal U$ is null-homotopic if and only if its image in every simple 2-representation is null-homotopic. Under mild boundedness assumptions, we prove that it actually suffices for the image in the simple 2-representations to be acyclic. We apply this result to the study of the Rickard complex $\Theta$ categorifying the action of the simple reflection of $\mathrm{SL}_2$. We prove that $\Theta$ is invertible in the homotopy category of $\U$, and that there is a homotopy equivalence $\Theta E \simeq F\Theta[-1]$.
	\end{abstract}
	
	\tableofcontents
	
	\section{Introduction} Consider the Lie algebra $\mathfrak{sl}_2$ of traceless complex $2\times 2$ matrices, and $U$ its enveloping algebra or associated quantum group. The category of finite dimensional $U$-modules is well understood. It is semi-simple and its simple objects are the $L(n)$ for $n \in \mathbb N$, where $L(n)$ denotes the simple $U$-module of dimension $n+1$. A well-known result is that the collection $\left(L(n)\right)_{n \in \mathbb{N}}$ of $U$-modules is \textit{faithful}. That is, if an element $z$ of $U$ acts by zero on $L(n)$ for all $n \in \mathbb{N}$, then $z=0$. In this paper, we prove a categorification of this result.
	
	The current theory of categorifications of quantized enveloping algebras and their representations began with the seminal work of Chuang and Rouquier \cite{ChR}. In their paper, Chuang-Rouquier introduce a notion of \textit{$\mathfrak{sl}_2$-categorification}, or action of $\mathfrak{sl}_2$ on a category. Roughly speaking, an $\mathrm{sl}_2$-categorification is the data of a category $\mathcal V$, a adjoint pair $(E,F)$ of endofunctors of $\mathcal{V}$ and some natural transformations subject to certain relations. These relations imply in particular that the Grothendieck group of $\mathcal{V}$ inherits an action of $\mathfrak{sl}_2$, with the functors $E$ and $F$ corresponding to the matrices $e = \left(\begin{smallmatrix} 0 & 1 \\ 0 & 0 \end{smallmatrix}\right)$ and $f = \left(\begin{smallmatrix} 0 & 0 \\ 1 & 0 \end{smallmatrix}\right)$ respectively. The finite-dimensional simple $U$-modules admit categorical analogues in this context, called \textit{minimal categorifications}. Unlike the representation theory of $\mathfrak{sl}_2$, the resulting theory is not semi-simple. However, minimal categorifications do form the elementary building blocks of all $\mathfrak{sl}_2$-categorifications, in the sense that every $\mathfrak{sl}_2$-categorification admits a filtration whose quotients are minimal categorifications. This notably enables Chuang-Rouquier to prove strong structural results about $\mathfrak{sl}_2$-categorifications, with deep consequences such as a proof of Brou\'e's abelian defect group conjecture for symmetric groups.
	
	This theory was later expanded in \cite{kl2}, \cite{kl3}, \cite{lauda}, \cite{2km}, \cite{qha}. These papers define a 2-category $\U$ associated with $\mathfrak{sl}_2$ (or more generally associated with any Kac-Moody algebra, but we will only work in type $A_1$ in this paper). This 2-category categorifies the integral idempotent form of $U$. The $\mathfrak{sl}_2$-categorifications from \cite{ChR} are precisely the 2-representations of $\U$, that is 2-functors from $\U$ to the 2-category of categories. Note that the definitions of Khovanov-Lauda and Rouquier, despite being different, are actually equivalent by work of Brundan \cite{brun}.
	
	The minimal categorifications from \cite{ChR} have an additive version introduced in \cite{2km}, called \textit{simple 2-representations} and denoted $\cL(n)$ for $n \in \mathbb{N}$. They form the building blocks of integrable 2-representations of $\U$. For instance, a result of particular interest is \cite[Lemma 5.17]{2km}, which says that if a complex of 1-morphisms of $\U$ is null-homotopic in all simple 2-representations, then it is null-homotopic in all integrable 2-representations. Our main theorem is an extension of this result, showing that we can lift the null-homotopy to the 2-category $\U$ itself.
	
	\begin{thm}[Theorem \ref{lifthom}]\label{main}
		\begin{enumerate}
			\item Let $C$ be a complex of 1-morphisms of $\mathcal U$. If the image of $C$ in $\cL(n)$ is null-homotopic for all $n \in \mathbb N$, then $C$ is hull-homotopic.
			\item Let $f$ be a morphism between complexes of 1-morphisms of $\mathcal U$. If the image of $f$ in $\cL(n)$ is a homotopy equivalence for all $n \in \mathbb N$, then $f$ is a homotopy equivalence.
		\end{enumerate}
	\end{thm}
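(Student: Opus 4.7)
Part (2) reduces immediately to part (1) via the cone construction: $f$ is a homotopy equivalence if and only if $\mathrm{cone}(f)$ is null-homotopic, and the cone is preserved by any additive 2-functor. So I focus on part (1).

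The strategy for (1) is to combine \cite[Lemma 5.17]{2km} --- which yields null-homotopy in every integrable 2-representation --- with a faithfulness property of the family of simple 2-representations. Consider the 2-representation $\mathcal V := \bigoplus_{n \ge 0} \cL(n)$, which is integrable as a direct sum of integrable 2-representations; by hypothesis and \cite[Lemma 5.17]{2km}, the image of $C$ in $\mathcal V$ is null-homotopic. The remaining task is to lift a null-homotopy in $\mathcal V$ to one in $\U$, via the 2-functor $\rho: \U \to \mathcal V$.

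The first step is to establish that $\rho$ is \emph{faithful on 2-morphisms}. This is the categorification of the classical injection $\dot U_{\mathfrak{sl}_2} \hookrightarrow \prod_n \mathrm{End}_{\mathbb Z}(L(n))$; I would prove it using the explicit KLR-type basis of the 2-morphism spaces of $\U$ (see \cite{lauda, 2km}), by checking that each basis element acts non-trivially in some $\cL(n)$. Once faithfulness is in hand, it suffices to produce any 2-morphism $h$ in $\U$ whose image in $\mathcal V$ is a null-homotopy of $\rho(C)$, for then the equation $dh + hd - \mathrm{id}_C = 0$, being true in every $\cL(n)$, lifts to $\U$.

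The technical heart is producing the lift $h$, since $\rho$ is not full. My approach would be to use the biadjunction between $E$ and $F$ to reduce the 2-morphism spaces appearing in $\mathrm{End}(C)$ to endomorphism rings of identity 1-morphisms $\mathrm{End}_{\U}(1_\lambda)$, whose structure (governed by bubbles and the nil Hecke algebra) is known explicitly, and then to match this structure weight-by-weight with the collection of the $h_n$ on the $\mathcal V$ side. I expect this matching --- identifying which tuples of homotopies across the $\cL(n)$ descend to a single coherent 2-morphism of $\U$ --- to be the principal obstacle, and to rely on the compatibility of the simple 2-representations enforced by the 2-category axioms, rather than on any naive gluing of the $h_n$.
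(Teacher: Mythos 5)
Your reduction of (2) to (1) via the cone matches the paper, but your plan for (1) has a genuine gap at exactly the point you yourself flag as ``the principal obstacle'': lifting a null-homotopy along the 2-functor $\rho:\U\rightarrow\bigoplus_n\cL(n)$. Faithfulness of $\rho$ on 2-morphisms is not enough. For a faithful but non-full functor a complex can become null-homotopic in the target without being so in the source: for instance the 2-periodic complex $\cdots\rightarrow A\xrightarrow{x}A\xrightarrow{x}A\rightarrow\cdots$ over $A=K[x]/(x^2)$ is not null-homotopic in $A\mathrm{-proj}$, yet its image under the faithful forgetful functor to vector spaces is split exact. So the whole content of the theorem sits in the lifting step, and your sketch (pushing by adjunction towards $\mathrm{End}_{\U}(1_\lambda)$ and ``matching weight-by-weight'') gives no mechanism for producing the 2-morphism $h$ in $\U$; you identify the obstacle but do not overcome it. Invoking \cite[Lemma 5.17]{2km} is also redundant here: the hypothesis already gives null-homotopy in each $\cL(n)$, hence in their direct sum, and the point of the theorem is precisely to pass from integrable 2-representations to $\U$ itself.

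The paper never lifts a homotopy. It instead exploits the fact that $\U$ is Krull-Schmidt with explicitly known indecomposable 1-morphisms (Theorem \ref{indecofU}): for any finite family of pairwise non-isomorphic indecomposables there is an $n$, large enough and of the parity of $\lambda$, such that $\Phi_n$ sends them to indecomposable, pairwise non-isomorphic objects with local maps on endomorphism rings; this is a graded-dimension computation (Propositions \ref{indec} and \ref{iso}, via Proposition \ref{adj} and Lemma \ref{polsym}). A purely formal Krull-Schmidt argument then shows that such a family of functors detects split surjections and injections (Proposition \ref{liftss}), hence detects null-homotopy of bounded-above or bounded-below complexes (Proposition \ref{bounded}), and Gaussian elimination reduces an unbounded complex to a direct sum of a bounded-above and a bounded-below one. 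Note in particular that no single $n$ works for all indecomposables simultaneously, so the correct statement is ``every finite family is separated by some $\cL(n)$'', not faithfulness of one 2-functor into $\bigoplus_n\cL(n)$. To salvage your approach you would essentially have to rediscover this detection-of-split-maps argument; faithfulness alone will not close the gap.
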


	Note that the second point results from the first, by considering the cone of $f$. We  now give a short overview of the proof of this Theorem. Our proof relies on Lauda's determination of the indecomposable 1-morphisms of $\U$, as well as bases for 2-morphism spaces 
	\cite{lauda}. We prove that the image of an indecomposable 1-morphism of $\U$ in $\cL(n)$ is indecomposable if $n$ is large enough and of suitable parity. We also prove that the images of two non-isomorphic indecomposable 1-morphisms of $\U$ in $\cL(n)$ are non-isomorphic if $n$ is large enough and of suitable parity. Once these results are established, Theorem \ref{main} follows from general facts about Krull-Schmidt categories.
	
	Our approach is similar to that of \cite{lauda}, which uses 2-representations of $\U$ coming from the geometry of Grassmanians to determine the indecomposable 1-morphisms and bases of 2-morphism spaces of $\U$. Another related work is \cite{bl}, which proves that $\U$ can be realized as an inverse limit of some integrable 2-representations. In particular, this shows that $\U$ is determined by its integrable 2-representation theory.
	
	Using results and methods of \cite{ChR}, we can prove that under mild boundedness assumptions, it suffices for the image of a complex of 1-morphisms of $\U$ in the simple 2-representations to be acyclic to conclude that the complex is null-homotopic.
	
	\begin{thm}[Theorem \ref{liftder}]\label{main2}
		\begin{enumerate}
			\item Let $C$ be a complex of 1-morphisms of $\mathcal U$ such that for any object $M$ of an integrable 2-representation, the complex $C(M)$ is bounded. If the image of $C$ in $\cL(n)$ is acyclic for all $n \in \mathbb N$, then $C$ is null-homotopic.
			\item Let $f$ be a morphism between complexes of 1-morphisms of $\mathcal U$ that both satisfy the boundedness assumption of (1). If  the image of $f$ in $\cL(n)$ is a quasi-isomorphism for all $n \in \mathbb N$, then $f$ is a homotopy equivalence.
		\end{enumerate}
	\end{thm}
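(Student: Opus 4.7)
The plan is to reduce Theorem~\ref{liftder} to Theorem~\ref{lifthom}. First, part (2) follows from part (1) by the standard cone argument: the cone of $f$ is a complex whose evaluation in any integrable 2-representation is bounded (being the cone of two bounded complexes), and whose evaluation in each $\cL(n)$ is acyclic; part (1) then gives that this cone is null-homotopic in $\U$, so $f$ is a homotopy equivalence. It therefore suffices to prove (1).

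To prove (1), I would promote the hypothesis ``acyclic in each $\cL(n)$'' to ``null-homotopic in each $\cL(n)$'' and then invoke Theorem~\ref{lifthom}. The key intermediate claim to establish is the following: for every $n \in \mathbb N$, if the image of $C$ in $\cL(n)$ is a bounded complex of 1-morphisms that is object-wise acyclic, then it is already null-homotopic in $\cL(n)$. With this in hand, Theorem~\ref{lifthom} finishes the job.

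To prove the intermediate claim, I would use the explicit description of the minimal categorification $\cL(n)$ from \cite{ChR} and \cite{2km}: each weight space $\cL(n)_\lambda$ is equivalent to the category of finitely generated graded modules over a finite-dimensional symmetric algebra $A_{n,\lambda}$ (a cohomology ring of a Grassmannian), and the generating 1-morphisms $E$, $F$ act by tensoring with bimodules which are projective on both sides. Consequently, every 1-morphism of $\U$ evaluates in $\cL(n)$ to an exact functor that preserves projectives. Fix a projective generator $P_\mu$ of each weight space $\cL(n)_\mu$. Evaluating a 1-morphism $\mathbf{1}_\mu \to \mathbf{1}_\lambda$ (in the image of $\U \to \cL(n)$) on $P_\mu$ identifies the additive category of such 1-morphisms with a full additive subcategory of $\mathrm{proj}(A_{n,\lambda})$, via the standard Morita correspondence. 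Under this identification, $C$ becomes a bounded complex of projective $A_{n,\lambda}$-modules; the hypothesis gives that it is acyclic, hence null-homotopic in $K^b(\mathrm{proj}(A_{n,\lambda}))$. Since the evaluation functor is fully faithful on the relevant subcategory of 1-morphisms, this null-homotopy lifts to a null-homotopy of $C$ as a complex of 1-morphisms of $\cL(n)$.

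The main obstacle in this plan is the verification that evaluation at a projective generator is fully faithful on the relevant subcategory of 1-morphisms of $\cL(n)$, and that the 1-morphisms coming from $\U$ really act through projective-preserving functors. Both statements follow from the bimodule description of $\cL(n)$ due to Chuang--Rouquier (the ``sweet'' bimodules arising from partial flag varieties), which is exactly where the ``results and methods of \cite{ChR}'' enter. Once these technical inputs are in place, the last step --- ``bounded acyclic complex of projectives is null-homotopic'' --- is standard homological algebra.
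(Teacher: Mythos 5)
Your reduction to Theorem \ref{lifthom} via the cone is exactly the paper's strategy, and the target of your intermediate claim is the right one: one must upgrade ``$\Phi_n(C)$ acyclic'' to ``$\Phi_n(C)$ null-homotopic in $\cL(n)$''. But your proposed proof of that claim has a genuine gap. A null-homotopy ``in $\cL(n)$'' must consist of 2-morphisms of the 2-representation, i.e.\ natural transformations of functors, equivalently morphisms of $(H_{k,n},H_{\ell,n})$-bimodules. Evaluation at a projective generator is faithful on such 1-morphisms but it is \emph{not} full: it forgets the right module structure. Concretely, for $n=2$, $k=1$ the image of $F1_0$ is $P_2$ as a $(P_2^{\s_2},P_2)$-bimodule; its bimodule endomorphisms are $P_2$ (multiplications), whereas $\mathrm{End}_{P_2^{\s_2}}(P_2)$ is a $2\times 2$ matrix algebra over $P_2^{\s_2}$ (it contains the Demazure operator $\partial_1$, which is not right $P_2$-linear). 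Equivalently: the terms of $\Phi_n(C)$ are projective only as one-sided modules, not as bimodules, so a bounded acyclic complex of them is contractible as a complex of left (or right) modules, but the contracting homotopy produced this way need not consist of bimodule maps, and there is no formal reason it can be corrected to one. So ``bounded acyclic complex of projectives is null-homotopic'' applies one side at a time and does not deliver the null-homotopy in $\cL(n)$ that Theorem \ref{lifthom} requires.

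The paper closes exactly this gap with Proposition \ref{nhonobj}, which is where the Chuang--Rouquier input really enters, and it is more than the ``sweet bimodule'' description. The argument is: (i) by the universal property of $\cL(n)$ (Theorem \ref{univ}), $C(F^iN)$ is null-homotopic for every highest weight object $N$ of an abelian integrable 2-representation, because on $\cL(n)$ itself $C(F^iP_n)$ is a bounded acyclic complex of projective \emph{modules}, where the one-sided argument is legitimate; (ii) an adjunction/d\'evissage argument ($X=C^{\vee}C(M)$, lowest nonvanishing cohomology, maximal $E$-power) shows $C(M)$ is acyclic for every object $M$ of every abelian integrable 2-representation; (iii) \cite[Corollary 5.33]{ChR} realizes $M$ as a summand of $S(A)$ for a morphism of 2-representations $S:A\mathrm{-mod}\rightarrow\mathcal V$ with the action preserving $A\mathrm{-proj}$, so $C(A)$ is a bounded acyclic complex of projective $A$-modules, hence null-homotopic by $A$-module maps, which \emph{are} morphisms in that 2-representation; transporting along $S$ contracts $C(M)$. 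Finally, the theorem is deduced by applying this to the abelian integrable 2-representation $\cL(n)\mathrm{-bim}$ (with $\U$ acting by left tensoring) and the object $M_n=\oplus_k H_{k,n}$, for which $C(M_n)=\Phi_n(C)$; this is what produces a null-homotopy by bimodule maps, after which Theorem \ref{lifthom} applies as you intended. Also note a small inaccuracy in your setup: in this paper $\cL(n)_\lambda$ is $H_{k,n}\mathrm{-proj}$ with $H_{k,n}$ Morita equivalent to the infinite-dimensional invariant ring $P_n^{\s_k\times\s_{n-k}}$, not to a finite-dimensional Grassmannian cohomology ring (that is the minimal, cyclotomic version); this does not affect the substance of the objection above.
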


	Categorifications of simple modules exist for all Kac-Moody algebras, see \cite{2km} for an abstract approach, and \cite{kk}, \cite{webs} for a concrete approach using cyclotomic quiver Hecke algebras. It would be interesting to generalize Theorems \ref{main} and \ref{main2} to arbitrary types. Unfortunately, our proof does not carry over since we do not have an explicit form for the indecomposable 1-morphisms of $\U$ in general. It is known that the indecomposable 1-morphism of $\U$ decategorify to the canonical basis of $U$ (see \cite{qha}, \cite{vv}), but the canonical basis does not have an explicit expression in general.\\	
	
	Our main motivation for Theorems \ref{main} and \ref{main2} is that performing explicit computations in the homotopy category of $\U$ is in general difficult. Our results provide a strategy to approach such computations. The simple 2-representations have an explicit form: they are categories of free modules over certain polynomial rings. In such a setting, proving that a complex is acyclic is a reasonable task in general.
	
	We give a concrete example of this strategy to study the Rickard complex $\Theta$. To understand our results, let us first recall the decategorified picture. The simple reflection $s=\left(\begin{smallmatrix} 0 & 1 \\ -1 & 0 \end{smallmatrix}\right)$ of $\mathrm{SL}_2$ acts on integrable representations of $U$, providing isomorphisms between weight spaces of opposite weights. The action of $s$ and the Chevalley generators $e,f$ of $U$ are related by the relation $se=-fs$. The Rickard complex $\Theta$ is a complex of 1-morphisms of $\U$ which decategorifies to $s$. Chuang and Rouquier proved in \cite{ChR} that $\Theta$ provides derived equivalences on integrable 2-representations of $\U$. The complex $\Theta$ was also studied in \cite{ck}, \cite{ckl}, with a more geometric framework for 2-representations. In this paper, we prove a categorification of the fact that $s$ is invertible and of the relation $se=-fs$.
	
	\begin{thm}[Theorems \ref{thetainv} and \ref{thetae}]\label{main3}
		The complex $\Theta$ is invertible up to homotopy and there is a homotopy equivalence $\Theta E \simeq F\Theta[-1]$.
	\end{thm}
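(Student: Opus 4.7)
The plan is to apply Theorems \ref{main} and \ref{main2} to reduce both assertions to their analogues in each simple 2-representation $\cL(n)$, where $\Theta$ specializes to the Rickard complex on a minimal categorification studied by Chuang and Rouquier in \cite{ChR}. The complex $\Theta$ is a priori unbounded as a complex of 1-morphisms of $\U$, but on any object $M$ of an integrable 2-representation only finitely many terms act nontrivially, since the weight decomposition of $M$ is bounded and sufficiently high divided powers of $E$ or $F$ annihilate $M$. This supplies the boundedness hypothesis needed to invoke Theorem \ref{main2}.

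For the invertibility of $\Theta$, I would construct a candidate inverse $\Theta^{-1}$ in $\U$ by dualizing $\Theta$ via the biadjunction between $E$ and $F$, together with explicit 2-morphisms $\eta\colon \mathrm{Id} \to \Theta^{-1}\Theta$ and $\varepsilon\colon \Theta\Theta^{-1} \to \mathrm{Id}$ built from the units and counits of that biadjunction. The resulting $\eta$ and $\varepsilon$ should specialize in each $\cL(n)$ to the invertibility data for the Rickard complex produced in \cite{ChR}; granting this, Theorem \ref{main2}(2) promotes them to honest homotopy equivalences in $\U$.

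For the homotopy equivalence $\Theta E \simeq F\Theta[-1]$, the plan is to construct an explicit chain map $\phi\colon \Theta E \to F\Theta[-1]$ using the differentials of $\Theta$, the adjunction 2-morphisms for $(E,F)$, and the divided-power relations in $\U$; the shift $[-1]$ categorifies the minus sign in $se = -fs$. In each $\cL(n)$ the image of $\phi$ would be checked to be a homotopy equivalence using Chuang-Rouquier's analysis of how $\Theta$ interacts with $E$ and $F$ on minimal categorifications, and Theorem \ref{main2}(2) then yields the desired homotopy equivalence in $\U$.

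The main obstacle will be the explicit construction of the 2-morphisms $\eta$, $\varepsilon$, and $\phi$ in the 2-morphism calculus of $\U$. The decategorified identities ($s s^{-1} = \mathrm{id}$ and $s e = -f s$) suggest the shape these 2-morphisms should take, but writing them down and verifying that they specialize to the maps produced by Chuang-Rouquier in each $\cL(n)$ requires careful bookkeeping, particularly because $\Theta$ has infinitely many terms in general and the boundedness only becomes visible after evaluating on an object of an integrable 2-representation.
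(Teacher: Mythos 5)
Your reduction strategy is exactly the one the paper uses: both assertions are deduced from Theorem \ref{liftder}, the boundedness hypothesis is checked from integrability exactly as you describe, and for invertibility the paper likewise takes the right dual $\Theta^{\vee}$ with the adjunction unit $u_{\lambda}\colon 1_{\lambda}\to\Theta^{\vee}\Theta 1_{\lambda}$ and counit $v_{\lambda}\colon\Theta\Theta^{\vee}1_{-\lambda}\to 1_{-\lambda}$, and verifies that their images under $\Phi_n$ are quasi-isomorphisms. So the skeleton is right. But what you defer as ``careful bookkeeping'' is the actual substance of the argument, and your plan to import the $\cL(n)$-verifications from \cite{ChR} does not quite close the gap. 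Chuang--Rouquier work with the minimal (abelian) categorifications, not with the additive simple 2-representations $\cL(n)$; the paper therefore re-proves, in the $\cL(n)$ setting, that the cohomology of $\Phi_n(\Theta 1_{-n+2k})$ is concentrated in top degree $n-k$ and that the top cohomology is an invertible bimodule (Theorem \ref{basis}, Corollary \ref{top}, Theorem \ref{coho}), and only then does invertibility of $\Phi_n(\Theta)$ in $D(\cL(n)\mathrm{-bim})$ --- hence the quasi-isomorphism property of $\Phi_n(u_{\lambda})$ and $\Phi_n(v_{\lambda})$ --- follow. Some argument of this kind (or an explicit transfer from the abelian to the additive setting) is needed and is absent from your plan.

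More seriously, for $\Theta E\simeq F\Theta[-1]$ there is no map in \cite{ChR} to compare against: the chain map has to be produced in $\U$ itself, and that is where the real work lies. The paper constructs it explicitly as $G_{\lambda}$, assembled from the elements $G_{k,\ell}=\sum_{r=0}^{\ell-1}(-1)^{r}x_1^{r}x_{[2,k]}\tau_{\omega_0[1,k]}\otimes\epsilon_{\ell-1-r}(x_2,\ldots,x_{\ell})e'_{\ell}\in H_k\otimes H_{\ell}^{\mathrm{op}}$ (note the precise statement carries a grading shift, $\Theta E1_{\lambda}\simeq q^{\lambda+2}F\Theta 1_{\lambda}[-1]$, which the construction must produce), verifies the chain-map identity in Lemma \ref{morphcomp}, builds a map $T_{\lambda}$ in the opposite direction with $T_{\lambda}G_{\lambda}=\mathrm{id}$ for $\lambda\geqslant 0$ and $G_{\lambda}T_{\lambda}=\mathrm{id}$ for $\lambda\leqslant 0$, and then shows $\Phi_n(G_{\lambda})$ is a quasi-isomorphism because both cohomologies are concentrated in a single degree, are isomorphic bimodules, and are locally finite of the same graded dimension, so a one-sided split map between them is forced to be an isomorphism. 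Note also that Theorem \ref{liftder} only requires $\Phi_n(G_{\lambda})$ to be a quasi-isomorphism, not a homotopy equivalence as in your plan --- that weakening is precisely what makes the strategy workable. Without an actual construction of the comparison map and a concrete argument for the quasi-isomorphism in each $\cL(n)$, your proposal is a correct reduction but not yet a proof.
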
 

	Here, $[-1]$ denotes a homological grading shift. To prove these results, we use Theorem \ref{main2}. More precisely, we define a morphism of complexes $\Theta E\rightarrow F\Theta[-1]$, and we check that it is a quasi-isomorphism in every simple 2-representation. Along the way, we revisit some of the results of \cite{ChR} regarding $\Theta$, in the setup of simple 2-representations rather than minimal categorifications. Namely, we prove that in every simple 2-representation, the cohomology of $\Theta$ is concentrated in top degree, and that the top cohomology is invertible. The fact that the top cohomology is invertible implies, thanks to Theorem \ref{main2}, that $\Theta$ is invertible up to homotopy. Our approach to these results is slightly different from that of Chuang and Rouquier, and is based on constructing explicit bases for the terms of $\Theta$ on simple 2-representations.\\
	
	We now give an overview of the structure of our paper. Section 2 introduces the notation used throughout the paper, and Section 3 defines the 2-category $\U$ and its 2-representations, and gives some of its properties, following \cite{2km} and \cite{lauda}. In Section 4, we recall the definition of the simple 2-representations, and we prove Theorems \ref{main} and \ref{main2}. Finally, Section 5 is devoted to the application of our results to the study of the Rickard complex and the proof of Theorem \ref{main3}
	
	\subsection*{Acknowledgments} I thank my advisor, Rapha\"{e}l Rouquier for suggesting this project and for his guidance and support through its completion.
	
	\bigskip
	
	\section{Notation and definitions}
	
	For the rest of this paper, we fix a field $K$. We will use the symbol $\otimes$ to mean $\otimes_K$.
	
	\subsection*{Graded categories} A \textit{graded category} is a $K$-linear category $\mathcal C$ together with an auto-equivalence $\mathcal C \rightarrow \mathcal C$ called the \textit{shift functor}. We will use the notation $M \mapsto qM$ to denote the shift functor on objects. For instance, the category $\mathrm{Vect}_K$ of graded $K$-vector spaces with homogeneous linear maps is graded. In that case the shift functor is as follows: for $V = \oplus_{k \in \mathbb{Z}} V_k \in \mathrm{Vect}_K$, $qV$ is the graded $K$-vector space defined by $(qV)_k = V_{k-1}$. More generally given a graded $K$-algebra $A$, the category $A\mathrm{-mod}$ of finitely generated graded modules and its full subcategory $A\mathrm{-proj}$ of graded projective modules are graded. Their shift functor is the restriction of that of $\mathrm{Vect}_K$. Given a Laurent polynomial $p = \sum_{\ell \in \mathbb Z} p_{\ell}q^{\ell} \in \mathbb N[q,q^{-1}]$ and $M$ an object of a graded category $\mathcal C$, we put
	\[
		pM = \bigoplus_{\ell \in \mathbb{Z}} q^{\ell}M^{\oplus p_{\ell}}.
	\]
	
	For two objects $M,N$ of a graded category $\mathcal C$ and $k \in \mathbb Z$, we define $\mathrm{Hom}_{\mathcal C}^{k}(M,N) = \mathrm{Hom}_{\mathcal C}(M,q^{-k}N)$. The elements of $\mathrm{Hom}_{\mathcal C}^{k}(M,N)$ are said to be \textit{morphisms of degree $k$} from $M$ to $N$. We can then define a graded $K$-vector space $\mathrm{Hom}^{\bullet}_{\mathcal C}(M,N)$ by
	\[
		\mathrm{Hom}^{\bullet}_{\mathcal C}(M,N) = \bigoplus_{k \in \mathbb{Z}} \mathrm{Hom}_{\mathcal C}^{k}(M,N).
	\]
	We have isomorphisms of graded vector spaces $\mathrm{Hom}^{\bullet}_{\mathcal C}(M,qN)\simeq q \, \mathrm{Hom}^{\bullet}_{\mathcal C}(M,N)$ and $\mathrm{Hom}^{\bullet}_{\mathcal C}(qM,N) \simeq q^{-1}\mathrm{Hom}^{\bullet}_{\mathcal C}(M,N)$.
	
	\subsection*{Graded dimensions} A graded $K$-vector space $V = \oplus_{k \in \mathbb{Z}} V_k$ is said to be \textit{locally finite} if each $V_k$ is finite dimensional. In this case, its \textit{graded dimension} is the formal series defined by
	\[
		\mathrm{grdim}(V) = \sum_{k\in \mathbb Z} \mathrm{dim}(V_k)q^k \in \mathbb{N}((q,q^{-1})).
	\] 
	We have $\mathrm{grdim}(qV)=q \, \mathrm{grdim}(V)$. If $M,N$ are two objects of a graded category $\mathcal C$ and $\mathrm{Hom}^{\bullet}_{\mathcal C}(M,N)$ is locally finite, we put
	\[
		\left< M,N \right> = \mathrm{grdim}\left(\mathrm{Hom}^{\bullet}_{\mathcal C}(M,N)\right).
	\]
	Then we have $\left<M,qN\right>=q\left<M,N\right>$ and $\left<qM,N\right>=q^{-1}\left<M,N\right>$. If $M=N$, we put $\left|M \right| = \left< M,M\right>$.
	
	\subsection*{Krull-Schmidt categories} A $K$-linear category $\mathcal C$ is called \textit{Krull-Schmidt} if every object of $\mathcal C$ is isomorphic to a finite direct sum of objects having local endomorphism algebras. If $\mathcal C$ is Krull-Schmidt, then the indecomposable objects are precisely the objects having local endomorphism algebras, and every object decomposes uniquely as a finite direct sum of indecomposable objects up to reordering of the terms. We say that a 2-category $\mathfrak C$ is Krull-Schmidt if for all objects $\lambda, \mu$ of $\mathfrak{C}$ the category $\mathrm{End}_{\mathfrak C}(\lambda,\mu)$ is Krull-Schmidt. In other words, $\mathfrak{C}$ is Krull-Schmidt if every 1-morphism of $\mathfrak{C}$ decomposes as a finite direct sum of 1-morphisms having local endomorphism algebras.
	
	\subsection*{Homological algebra} Let $\mathcal C$ be a $K$-linear category. We denote by $\mathrm{Comp}(\mathcal C)$ the category of (cochain) complexes of $\mathcal C$. That is, the objects of $\mathrm{Comp}(\mathcal C)$ are of the form
	\[
		M = (\cdots \rightarrow M^r \xrightarrow{d_M^r} M^{r+1} \xrightarrow{d_M^{r+1}} M^{r+2} \rightarrow \cdots )
	\]
	with $d_M^{r+1}d_M^r = 0$ for all $r \in \mathbb Z$. If $M \in \mathrm{Comp}(\mathcal C)$, its homological shift $M[1]$ is defined by $(M[1])^r = M^{r-1}$ with differential $d_{M[1]}^r=-d_M^{r-1}$ for all $r\in \mathbb Z$. Note that when $\mathcal{C}$ is graded, $\mathrm{Comp}(\mathcal C)$ has two compatible gradings: the one coming from $\mathcal C$ and the homological grading.
	
	Given $M,N \in \mathrm{Comp}(\mathcal C)$ and $f : M \rightarrow N$ a morphism of complexes, the \textit{cone} of $f$ is the object $\mathrm{Cone}(f)$ of $\mathrm{Comp}(\mathcal C)$ defined by $\mathrm{Cone}(f)^r = M^{r+1} \oplus N^r$ with differential given by
	\[
		d_{\mathrm{Cone}(f)}^r = \left[ 
		\begin{array}{cc}
			-d_M^{r+1} & 0 \\
			f & d_N^r
		\end{array}
		\right].
	\]
	We denote by $K(\mathcal C)$ the homotopy category of $\mathcal C$. If $\mathcal{C}$ is furthermore abelian, we denote by $D(\mathcal C)$ its derived category. In that case, we denote by $H^r(M)$ the $r^{\mathrm{th}}$ cohomology object of a complex $M$.
	
	\bigskip
	
	\section{Categorified quantum $\mathfrak{sl}_2$}
	
	\subsection{Affine nil Hecke algebras}
	
	We start by recalling the essential facts about affine nil Hecke algebras. Let $\s_n$ be the symmetric group on $n$ letters. It is generated by $s_1,\ldots,s_n$, where $s_i$ denotes the transposition $(i \ i+1)$. The \textit{length} of an element $\omega \in \s_n$ is the smallest integer $r$ such that $\omega=s_{i_1}\ldots s_{i_r}$ for some $i_1,\ldots,i_r \in \lbrace 1, \ldots, n-1\rbrace$. We denote the length of $\omega$ by $l(\omega)$. For $k \leqslant \ell$ two integers in $\lbrace 1,\ldots, n\rbrace$, we denote by $\s_{[k,\ell]}$ the subgroup of $\s_n$ generated by $\left  \{s_i, \, i \in \lbrace k,\ldots,\ell-1\rbrace \right \}$. The longest element of $\s_{[k,\ell]}$ is denoted $\omega_0[k,\ell]$. If $a,b \in \mathbb{N}$ are such that $a+b=n$, we will denote the subgroup $\s_{[1,a]}\times\s_{[a+1,n]}$ of $\s_n$ by $\s_a\times\s_b$.  
	
	\begin{defi}
		The \textit{affine nil Hecke algebra} $H_n$ is the unital $K$-algebra on the generators $x_1,\ldots,x_n$ and $\tau_1,\ldots,\tau_{n-1}$ subject to the following relations for all $i,j \in \lbrace1,\ldots,n\rbrace$ and $k,\ell \in \lbrace 1,\ldots,n-1\rbrace$:
		\begin{enumerate}
			\item $x_ix_j=x_jx_i$,
			\item $\tau_k^2=0$,
			\item $\tau_k x_i - x_{s_k(i)}\tau_k = \delta_{i,k+1} - \delta_{i,k}$,
			\item $\tau_k\tau_{\ell}=\tau_{\ell}\tau_k$ if $\vert k - \ell \vert >1$,
			\item $\tau_{k+1}\tau_k\tau_{k+1}=\tau_k\tau_{k+1}\tau_k$ if $k<n-1$.
		\end{enumerate}
		We fix a grading on $H_n$ with $x_1,\ldots,x_n$ in degree 2 and $\tau_1,\ldots,\tau_{n-1}$ in degree $-2$.
	\end{defi}

	Let $P_n=K[x_1,\ldots,x_n]$. We put a grading on $P_n$ with $x_1,\ldots,x_n$ in degree 2. The symmetric group $\s_n$ acts on $P_n$ by permuting $x_1,\ldots,x_n$. The \textit{Demazure operators} are the $P_n^{\s_n}$-linear operators $\partial_1,\ldots,\partial_{n-1}$ on $P_n$ defined by
	\[
		\partial_i(P) = \frac{P-s_i(P)}{x_{i+1}-x_i}.
	\]
	There is an isomorphism of graded $K$-algebras (see \cite[Proposition 3.4]{2km})
	\begin{equation}\label{reppol}
		\left \{ \begin{array}{rcl}
			H_n & \rightarrow & \mathrm{End}_{P_n^{\s_n}}^{\bullet}(P_n), \\
			\tau_k & \mapsto & \partial_k, \\
			x_k & \mapsto & \text{multiplication by} \ x_k.
		\end{array} \right.
	\end{equation}
	Given $\omega = s_{i_1}\ldots s_{i_r}$ with $l(\omega)=r$, the element $\tau_{i_1}\ldots\tau_{i_r} \in H_n$ only depends on $\omega$ and is denoted by $\tau_{\omega}$. We define similarly $\partial_{\omega}=\partial_{i_1}\ldots\partial_{i_r}$.
	
	As a $P_n^{\s_n}$-module, $P_n$ is free of rank $n!$. The following sets are bases
	\begin{align*}
		&\left \{ x_1^{a_1}\ldots x_n^{a_n}, \ 0\leqslant a_i \leqslant n-i \right \}, \\
		&\left \{ \partial_{\omega}(x_2x_3^2\ldots x_n^{n-1}), \ \omega \in \s_n \right \}.
	\end{align*}
	In particular, $H_n$ is isomorphic the algebra of $(n!)\times(n!)$ matrices with coefficients in $P_n^{\s_n}$. The $H_n$-module $P_n$ with action given by (\ref{reppol}) is the unique indecomposable graded projective $H_n$-module up to isomorphism. Define
	\begin{align*}
		& e_n = x_2x_3^2\ldots x_{n}^{n-1}\tau_{\omega_0[1,n]}, \\
		& e_n'=(-1)^{\frac{n(n-1)}{2}}\tau_{\omega_0[1,n]}x_1^{n-1}x_2^{n-2}\ldots x_{n-1}.
	\end{align*}
	These are orthogonal primitive idempotents of $H_n$. We have isomorphisms of graded $(H_n,P_n^{\s_n})$-bimodules:
	\[
		\left \{
		\begin{array}{rcl}
			P_n & \xrightarrow{\sim} & q^{\frac{n(n-1)}{2}}H_ne_n, \\
			1 & \mapsto & \tau_{\omega_0[1,n]},
		\end{array}
		\right. \quad \text{ and } \quad \left \{		
		\begin{array}{rcl}
			P_n & \xrightarrow{\sim} & H_ne'_n, \\
			1 & \mapsto & e'_n.
		\end{array}
		\right.
	\]
	Hence we have Morita equivalences
	\begin{equation}\label{morita}
		\left \{
		\begin{array}{rcl}
		H_n\mathrm{-mod} & \xrightarrow{\sim} & P_n^{\s_n}\mathrm{-mod},  \\
		M & \mapsto & e_nM,
		\end{array}
		\right. \quad \text{ and } \quad \left \{		
		\begin{array}{rcl}
		H_n\mathrm{-mod} & \xrightarrow{\sim} & P_n^{\s_n}\mathrm{-mod}, \\
		M & \mapsto & e'_nM.
		\end{array}
		\right.
	\end{equation}

	\medskip
	
	\subsection{The 2-category $\mathcal U$}
	
	\subsubsection{Definitions} We now define the 2-category associated with $\mathfrak{sl}_2$, following \cite{2km}. For $k \in \mathbb Z$, we define the quantum integer $\left[k\right]$ by
	\[
		\left[k\right] = \frac{q^k-q^{-k}}{q-q^{-1}}.
	\]
	If $k,n \in \mathbb N$ and $k \leqslant n$, we put 
	\[
		\left[k\right]! = \prod_{\ell=1}^k \left[\ell\right], \quad \left[ \begin{array}{c} n \\ k \end{array}\right] = \frac{[n]!}{[k]![n-k]!}.
	\]
	The 2-category $\U$ will be defined as the idempotent completion of a strict 2-category $\U'$ that we start by defining.
	
	\begin{defi}\label{catsl2}
		Define a strict 2-category $\U'$ by the following data.
	\begin{itemize}
		\item The set of objects of $\U'$ is $\mathbb Z$.
		
		\item Given $\lambda, \lambda' \in \mathbb{Z}$, the 1-morphisms $\lambda \rightarrow \lambda'$ are direct sums of shifts of words of the form $E^{n_1}F^{m_1}\ldots E^{n_r}F^{m_r}$, with $r,n_i,m_i \in \mathbb N$ and $\lambda' - \lambda = 2\sum_i (n_i-m_i)$. Composition of of 1-morphisms is given by concatenation of words, and the identity 1-morphism of $\lambda$ is the empty word, denoted $1_{\lambda}$. When we want to specify that a word $X$ has source $\lambda$ (resp. target $\lambda'$), we will write $X1_{\lambda}$ (resp. $1_{\lambda'}X$).
		
		\item The 2-morphisms of $\U'$ are generated by $x : F \rightarrow F$ of degree $2$, $\tau : F^2 \rightarrow F^2$ of degree $-2$, $\eta : 1_\lambda \rightarrow FE1_{\lambda}$ of degree $1+\lambda$, and $\varepsilon : EF1_{\lambda} \rightarrow 1_{\lambda}$ of degree $1-\lambda$, for all $\lambda \in \mathbb Z$. On these generating 2-morphisms, we impose the following relations:
		\begin{enumerate}
			\item\label{sq} $\tau^2=0$,
			\item\label{xt} $\tau \circ Fx - xF\circ \tau = Fx \circ \tau - \tau \circ xF=F^2$,
			\item\label{br} $\tau F\circ F\tau \circ \tau F = F\tau \circ \tau F \circ F\tau$,
			\item\label{ap} $F = F\varepsilon\circ \eta F$ and $E=\varepsilon E \circ E\eta$,
			\item\label{weyl} for all $\lambda \in \mathbb{Z}$, $\rho_{\lambda}$ is invertible, where $\rho_{\lambda}$ is the 2-morphism defined by
				\[
					\rho_{\lambda} = \left \{\begin{array}{cl}
					\left[ \begin{array}{c} 
					\sigma \\
					\varepsilon \\
					\varepsilon \circ Ex \\
					\vdots \\
					\varepsilon\circ Ex^{\lambda-1}
					\end{array} \right] : EF1_{\lambda} \rightarrow FE1_{\lambda} \oplus \left[\lambda\right]1_{\lambda} & \text{ if } \lambda \geqslant 0, \\
					& \\
					\left[\sigma \quad \eta \quad xE \circ \eta \quad \ldots \quad  x^{1-\lambda}E\circ \eta \right] :  EF1_{\lambda} \oplus [-\lambda]1_{\lambda} \rightarrow FE1_{\lambda} & \text{ if } \lambda \leqslant 0,
					\end{array} \right.
				\]	
			with $\sigma = \varepsilon FE \circ E\tau E \circ EF\eta : EF \rightarrow FE$.
		\end{enumerate}
	\end{itemize}
	\end{defi}

	Let us give some more comments on this definition. Relations (\ref{sq}), (\ref{xt}) and (\ref{br}) imply that the affine nil Hecke algebra $H_n$ acts on $F^n$, as follows
	\[
		\left \{ \begin{array}{rcl}
			H_n & \rightarrow & \mathrm{End}_{\U'}^{\bullet}(F^n), \\
			x_k & \mapsto & F^{k-1}xF^{n-k}, \\
			\tau_k & \mapsto & F^{k-1}\tau F^{n-k-1}.
		\end{array} \right.
	\]
	Relation (\ref{ap}) says that for all $\lambda \in \mathbb{Z}$, we have adjoint pairs $\left(E1_{\lambda}, q^{-1-\lambda}F1_{\lambda+2}\right)$ with unit and counit of adjunction given by $\eta$ and $\varepsilon$. Finally, relation (\ref{weyl}) means that there are additional generating 2-morphisms in $\mathcal{U'}$, defined to be the inverses of the maps $\rho_{\lambda}$.	
	
	\begin{defi}\label{defU}
		The 2-category $\U$ is the idempotent completion of $\U'$. This means that $\U$ has the same set of objects as $\U'$ and that for all $\lambda,\lambda' \in \mathbb{Z}$, the category $\mathrm{Hom}_{\U}(\lambda,\lambda')$ is the idempotent completion of the category $\mathrm{Hom}_{\U'}(\lambda,\lambda')$.
	\end{defi}

	In particular, we can define \textit{divided powers} $F^{(n)}$ and $E^{(n)}$ in $\U$ using idempotents 2-morphisms. More precisely, using the idempotent $e_n \in H_n$, we define
	\[
		F^{(n)}=q^{-\frac{n(n-1)}{2}}e_n(F^n).
	\]
	Then we have an isomorphism $F^n \simeq \left[n\right]!F^{(n)}$. Furthermore for $a,b \in \mathbb{N}$ we have
	\begin{equation}\label{decompdp}
		F^{(a)}F^{(b)} \simeq \qbin{a+b}{a}F^{(a+b)}.
	\end{equation}
	Similarly, the adjunction between $E$ and $F$ provides an action of $H_n^{\mathrm{opp}}$ on $E^n$. Using the idempotent $e_n' \in H_n$, we define
	\[
		E^{(n)}=q^{-\frac{n(n-1)}{2}}(E^n)e_n',
	\]
	where we have written the action of $H_n^{\mathrm{opp}}$ on $E^n$ as a right action of $H_n$. Then there are adjoint pairs $\left(E^{(n)}1_{\lambda}, q^{-n(\lambda+n)}F^{(n)}1_{\lambda+2n}\right)$.\\
	
	Given $\lambda \in \mathbb{Z}$ we define
	\[
		U1_{\lambda}  = \bigoplus_{\lambda' \in \mathbb{Z}} \mathrm{Hom}_{\U}(\lambda,\lambda').
	\]
	It is an additive and idempotent complete category. In $\U1_{\lambda}$ there are isomorphisms
	\begin{align}\label{decomp}
		&E^{(a)}F^{(b)}1_{\lambda} \simeq \bigoplus_{i=0}^{\mathrm{min}\left \{a,b\right \}} \qbin{\lambda+a-b}{i} F^{(b-i)}E^{(a-i)}1_{\lambda} \quad \mathrm{if} \ \lambda \geqslant b-a,\\
		\label{decomp2}&F^{(b)}E^{(a)}1_{\lambda} \simeq \bigoplus_{i=0}^{\mathrm{min}\left \{a,b\right \}} \qbin{-\lambda-a+b}{i} E^{(a-i)}F^{(b-i)}1_{\lambda} \quad \mathrm{if} \ \lambda \leqslant b-a.
	\end{align}
	They can be constructed using the isomorphisms $\rho_{\lambda}$ (see \cite[Lemma 4.14]{2km} for details).
	
	\medskip

	\subsubsection{Indecomposable 1-morphisms}
	
	In \cite{lauda}, Lauda defines a different 2-category. The main differences are that Lauda's version requires an adjoint pair $\left(q^{\lambda+1}F1_{\lambda+2}, E1_{\lambda}\right)$ and the inverses of the maps $\rho_{\lambda}$ are given explicitly in terms of the unit and counit of this second adjunction. However, Brundan proved in \cite{brun} that the 2-categories of Lauda and Rouquier are isomorphic. In particular, the results proved in \cite{lauda} remain valid in $\U$. The main result of interest for this paper is the determination of the indecomposable 1-morphisms of $\U$.
	
	\begin{thm}[\cite{lauda}]\label{indecofU} The 2-category $\mathcal U$ is Krull-Schmidt.	A complete set of pairwise non-isomorphic indecomposable 1-morphisms of $\mathcal U$ is given by
		\[
		\left \{ q^sE^{(a)}F^{(b)}1_{\lambda}, \ \lambda \leqslant b-a, \ s \in \mathbb{Z} \right \} \bigcup \left \{ q^sF^{(b)}E^{(a)}1_{\lambda}, \ \lambda > b-a, \ s \in \mathbb Z \right \}.
		\]
		Furthermore, if $X$ is an indecomposable 1-morphism of $\U$ we have $\left| X\right| \in 1 + q\mathbb{N}[q]$.
	\end{thm}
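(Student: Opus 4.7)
The plan has three parts: (i) every 1-morphism of $\U$ is a finite direct sum of shifts of members of the claimed list; (ii) each list member has a graded local endomorphism ring; (iii) distinct list members are pairwise non-isomorphic. For (i), I would take an arbitrary word $E^{n_1}F^{m_1}\cdots E^{n_r}F^{m_r}1_\lambda$ and iteratively apply the commutation isomorphisms (\ref{decomp})--(\ref{decomp2}), together with $F^n \simeq [n]!\,F^{(n)}$, $E^n \simeq [n]!\,E^{(n)}$, and the product rule (\ref{decompdp}), to push every $E$ past every $F$. Depending on the sign of $b - a - \lambda$ we land in the $E^{(a)}F^{(b)}1_\lambda$ normal form or the $F^{(b)}E^{(a)}1_\lambda$ normal form; the passage from $\U'$ to $\U$ by idempotent completion absorbs the $[n]!$ multiplicities into divided powers, so the claimed set additively generates $\U$.

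For step (ii), the heart of the proof is the computation of $|E^{(a)}F^{(b)}1_\lambda|$ for $\lambda \le b-a$ (the other case being symmetric). Using the adjunction $(E^{(a)}, q^{-a(\lambda+a)}F^{(a)})$, I would rewrite the self-pairing of $E^{(a)}F^{(b)}1_\lambda$, up to a controlled grading shift, as a graded Hom from $F^{(a)}E^{(a)}F^{(b)}1_\lambda$ to $F^{(b)}1_\lambda$. Iteratively applying (\ref{decomp}), (\ref{decomp2}), and (\ref{decompdp}) as appropriate, together with repeated use of the adjunctions, reduces this to pairings between pure divided powers of $F$ acting on various weights. Using the Morita equivalence (\ref{morita}) and the polynomial representation (\ref{reppol}) to identify $\mathrm{End}^\bullet(F^{(n)}1_\lambda)$ with an explicit non-negatively graded algebra whose degree-zero part is one-dimensional (namely $P_n^{\s_n}$ tensored with the bubble endomorphisms of $1_\lambda$), careful bookkeeping of the $q$-binomial multiplicities yields $|E^{(a)}F^{(b)}1_\lambda| \in 1 + q\mathbb N[q]$. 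A graded $K$-algebra concentrated in non-negative degrees whose degree-zero part is one-dimensional is local, so each list member is indecomposable, which also proves the final assertion of the theorem.

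For step (iii), the same adjunction-and-expansion manoeuvre applied to $\ps{E^{(a)}F^{(b)}1_\lambda}{E^{(a')}F^{(b')}1_\lambda}$ (both indices in the first list) produces a sum whose degree-zero contribution survives only when $(a,b) = (a',b')$ and the grading shifts coincide; the second list is handled identically. The two lists are disjoint because the second family imposes the strict inequality $\lambda > b-a$, so there is no overlap to rule out separately. Combining (i), (ii), (iii): every 1-morphism of $\U$ is a finite direct sum of objects with local endomorphism rings, so $\U$ is Krull--Schmidt, and the uniqueness of such decompositions classifies the indecomposables up to shift and isomorphism.

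The main obstacle is making the graded dimension computation in (ii) rigorous. The isomorphisms (\ref{decomp})--(\ref{decomp2}) are stated at the object level, and to extract $|E^{(a)}F^{(b)}1_\lambda|$ one needs a matching lower bound to complement the upper bound produced by the adjunction-plus-decomposition calculation. Lauda's strategy is two-pronged: an upper bound from an explicit diagrammatic basis for 2-morphism spaces of $\U$, and a matching lower bound obtained by evaluating on a sufficiently faithful 2-representation coming from the cohomology of Grassmannians. I would follow this route rather than attempt a purely intrinsic argument, since the diagrammatic basis theorem and the Grassmannian calculation are essentially what drive the statement being proved.
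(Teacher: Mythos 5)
You should note first that the paper does not prove this statement at all: it is imported verbatim from \cite{lauda}, with the remark that Brundan's isomorphism \cite{brun} between Lauda's 2-category and Rouquier's $\U$ is what makes Lauda's result applicable here. So there is no ``paper proof'' to match; the right comparison is with Lauda's argument, and your outline is essentially a faithful sketch of it. Your step (i) (pushing $E$'s past $F$'s via (\ref{decomp})--(\ref{decomp2}) and (\ref{decompdp})) and your Hom-pairing computations in (ii)--(iii) are the same manoeuvres the paper itself performs later (Propositions \ref{adj}, \ref{indec}, \ref{iso}), but note that the paper deliberately carries them out inside a 2-representation, where the base case $\mathrm{End}^{\bullet}(F^{(n)}1_{\lambda'})$ is computable; doing them inside $\U$ itself, as your step (ii) requires, needs the exact value of $\mathrm{End}^{\bullet}_{\U}(F^{(n)}1_{\lambda})$ (the polynomial-times-bubble algebra with one-dimensional degree-zero part and no negative degrees), and that is precisely Lauda's nondegeneracy theorem, proved via the 2-representations on cohomology of Grassmannians. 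You acknowledge this and propose to follow that route, which is legitimate but means your argument is not independent of the citation --- it reproves the cited theorem by the cited method. Two minor points to tighten if you develop this: (a) you should invoke Brundan's isomorphism explicitly, since Lauda's basis and indecomposability results are stated for his 2-category (with the second adjunction built in) and must be transported to $\U$ as defined here; (b) in step (iii), the disjointness of the two lists is not purely a matter of the index constraint --- one compares $q^sE^{(a)}F^{(b)}1_{\lambda}$ with $q^rF^{(d)}E^{(c)}1_{\lambda}$ only when the targets agree, which forces $b-a=d-c$, and then $\lambda\leqslant b-a$ and $\lambda>d-c$ are incompatible; spelling this out closes the gap cleanly.
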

	
	\medskip
	
	\subsection{2-representations}
	
	A \textit{2-representation} of $\mathcal U$ is a strict 2-functor $\mathcal U \rightarrow \mathfrak{Lin}_K$, where $\mathfrak{Lin}_K$ denotes the strict 2-category of $K$-linear, graded and idempotent complete categories. More explicitly, a 2-representation $\mathcal V$ of $\mathcal U$ is the data of
	\begin{itemize}
		\item categories $\mathcal V_{\lambda} \in \mathfrak{Lin}_K$ for each $\lambda \in \mathbb Z$,
		\item functors $E : \mathcal V_{\lambda} \rightarrow \mathcal V_{\lambda+2}$ and $F : \mathcal V_{\lambda} \rightarrow \mathcal V_{\lambda-2}$ for each $\lambda \in \mathbb Z$,
		\item natural transformations $x : F \rightarrow F$ of degree 2, $\tau : F^2 \rightarrow F^2$ of degree $-2$, $\varepsilon : EF1_{\lambda} \rightarrow 1_{\lambda}$ of degree $1-\lambda$, and $\eta : 1_{\lambda} \rightarrow FE1_{\lambda}$ of degree $1+\lambda$,
	\end{itemize}
	such that the relations of Definition \ref{catsl2} are satisfied. An \textit{abelian 2-representation} is a 2-representation $\mathcal V$ such that for all $\lambda \in \mathbb Z$, the category $\mathcal V_{\lambda}$ is abelian. Note that since the functors $E,F$ are both left and right adjoints, they are exact on $\mathcal V$. A 2-representation $\mathcal V$ is said to be \textit{integrable} if for every $M \in \mathcal V$, we have $E^i(M)=F^i(M)=0$ for some $i \in \mathbb{N}$.\\
	
	Given 2-representations $\mathcal V,\mathcal W$, a \textit{morphism of 2-representations} $D : \mathcal V \rightarrow \mathcal W$ is the data of:
	\begin{itemize}
		\item functors $D : \mathcal V_{\lambda} \rightarrow \mathcal W_{\lambda}$ for all $\lambda \in \mathbb{Z}$,
		\item natural isomorphisms $\alpha : DE \xrightarrow{\sim} ED$ and $\beta : DF \xrightarrow{\sim} FD$,
	\end{itemize}
	such that $\alpha$ and $\beta$ are compatible with the 2-morphisms $x,\tau,\varepsilon$ and $\eta$ of $\U$. This compatibility condition means that the following diagrams commute:
	\begin{align*}
		&\hspace{0.54cm} \xymatrix{
			DF \ar[r]^-{Dx} \ar[d]_-{\beta} & DF \ar[d]^-{\beta} \\
			FD \ar[r]_-{xD} & FD
		} \quad \quad \xymatrix{
		DF^2 \ar[r]^-{D\tau} \ar[d]_-{F\beta\circ \beta F} & DF^2 \ar[d]^-{F\beta\circ \beta F} \\
		F^2D \ar[r]_-{\tau D} & F^2D
	} \\
		&\xymatrix{
		DEF \ar[r]^-{D\varepsilon} \ar[d]_-{E\beta \circ \alpha F} & D \\
		EFD \ar[ru]_-{\varepsilon D} &
		} \hspace{1.6cm} \xymatrix{
		D \ar[r]^-{D\eta} \ar[dr]_-{\eta D} & DFE \ar[d]^-{F\alpha\circ \beta E} \\
		 & FED
		}
	\end{align*}
	Then, given $D$ a morphism of 2-representations and $C$ a complex of 1-morphisms of $\U$, we have a canonical isomorphism of complexes of functors $DC \simeq CD$.\\
	
	The following computational result will be useful. It is a slightly different presentation of \cite[Proposition 9.8]{lauda}.
	
	\begin{prop}\label{adj}
		Let $\mathcal V$ be a 2-representation of $\mathcal U$ and denote by $\mathcal W$ the category of endofunctors of $\mathcal V$. Let $\lambda \in \mathbb{Z}$ and denote by $\Phi : U1_{\lambda} \rightarrow \mathcal{W}$ the functor induced by the structure of 2-representation on $\mathcal V$. Let $a,b,c,d \in \mathbb{N}$ be such that $a-b=c-d$. If $\lambda \geqslant b-a$, there is an isomorphism
		\begin{multline*}
			 \mathrm{Hom}^{\bullet}_{\mathcal W}\left(\Phi\left(F^{(b)}E^{(a)}1_{\lambda}\right), \Phi\left(F^{(d)}E^{(c)}1_{\lambda}\right)\right) \simeq \\
			 \bigoplus_{i=0}^{\mathrm{min} \left \{a,c\right \}} q^{(a+c-i)(\lambda+a+c-i)}\qbin{\lambda+a+c}{i}\qbin{b+c-i}{b}\qbin{a+d-i}{d} \mathrm{End}^{\bullet}_{\mathcal W}\left(\Phi\left(F^{(a+d-i)}1_{\lambda+2(a+c-i)}\right)\right).
		\end{multline*}		
		If $\lambda \leqslant b-a$, there is an isomorphism
		\begin{multline*}
			\mathrm{Hom}^{\bullet}_{W}\left(\Phi\left(E^{(a)}F^{(b)}1_{\lambda}\right),\Phi\left( E^{(c)}F^{(d)}1_{\lambda}\right)\right) \simeq \\
			\bigoplus_{i=0}^{\mathrm{min}\left \{b,d \right\}} q^{(b+d-i)(b+d-i-\lambda)} \qbin{b+d-\lambda}{i}\qbin{a+d-i}{a}\qbin{b+c-i}{c}\mathrm{End}^{\bullet}_{\mathcal W}\left(\Phi\left(E^{(a+d-i)}1_{\lambda-2(b+d-i)}\right)\right).
		\end{multline*}
	\end{prop}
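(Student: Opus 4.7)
I will treat the case $\lambda \geq b-a$; the case $\lambda \leq b-a$ should follow by an analogous argument swapping the roles of $E$ and $F$ and using (\ref{decomp2}) in place of (\ref{decomp}). Let $\nu := \lambda + 2(a-b) = \lambda + 2(c-d)$ be the common output weight of both 1-morphisms. My plan is to use biadjunction to reduce the Hom-space to a computation at weight $\nu$, then apply (\ref{decomp}) and (\ref{decompdp}) to decompose the resulting endofunctor into a sum indexed by $i$.

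First I would iterate the biadjoint pairs $(E^{(n)}1_\mu, q^{-n(\mu+n)}F^{(n)}1_{\mu+2n})$ from Definition~\ref{catsl2} and their Lauda counterparts $(q^{n(\mu+n)}F^{(n)}1_{\mu+2n}, E^{(n)}1_\mu)$ to compute the left adjoint of $F^{(b)}E^{(a)}1_\lambda$: one finds it equals $q^{s_0}F^{(a)}E^{(b)}1_\nu$ with $s_0 = a(\lambda+a) - b(\nu+b)$, the minus sign arising from the opposite sign conventions in the left adjoints of $E^{(n)}$ and $F^{(n)}$. The usual adjunction isomorphism for left adjoints then gives
\[
\mathrm{Hom}^\bullet_{\mathcal W}(\Phi(F^{(b)}E^{(a)}1_\lambda), \Phi(F^{(d)}E^{(c)}1_\lambda)) \simeq q^{s_0}\mathrm{Hom}^\bullet_{\mathcal W}(\Phi(1_\nu), \Phi(F^{(d)}E^{(c)}F^{(a)}E^{(b)}1_\nu)).
\]

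Next I would apply (\ref{decomp}) to the interior subword $E^{(c)}F^{(a)}1_{\nu+2b}$; the weight condition $\nu+2b \geq a-c$ follows from $\lambda \geq b-a$ combined with $a+d=b+c$. Using $\nu+2b+c-a = \lambda+a+c$, and then collapsing the outer pairs $F^{(d)}F^{(a-i)}$ and $E^{(c-i)}E^{(b)}$ via (\ref{decompdp}) and its $E$-analog, setting $n_i := a+d-i = b+c-i$ one obtains
\[
F^{(d)}E^{(c)}F^{(a)}E^{(b)}1_\nu \simeq \bigoplus_{i=0}^{\min\{a,c\}}\qbin{\lambda+a+c}{i}\qbin{a+d-i}{d}\qbin{b+c-i}{b}\,F^{(n_i)}E^{(n_i)}1_\nu.
\]
Finally, for each $i$ the adjunction $(E^{(n_i)}1_\nu, q^{-n_i(\nu+n_i)}F^{(n_i)}1_{\nu+2n_i})$ together with the degree-preserving biadjoint identification of endomorphism algebras yields $\mathrm{Hom}^\bullet_{\mathcal W}(\Phi(1_\nu), \Phi(F^{(n_i)}E^{(n_i)}1_\nu)) \simeq q^{n_i(\nu+n_i)}\mathrm{End}^\bullet_{\mathcal W}(\Phi(F^{(n_i)}1_{\nu+2n_i}))$. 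Since $\nu+2n_i = \lambda+2(a+c-i)$ the End factor matches the statement, and a direct computation gives $s_0 + n_i(\nu+n_i) = (a+c-i)(\lambda+a+c-i)$, producing the claimed overall shift. The main technical obstacle will be the careful bookkeeping of the biadjunction shifts.
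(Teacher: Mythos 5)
Your proposal is correct and takes essentially the same route as the paper: both arguments combine the shifted divided-power adjunctions with the decompositions (\ref{decomp}) and (\ref{decompdp}), the only cosmetic difference being that you pass to $\mathrm{Hom}^{\bullet}(\Phi(1_{\nu}),-)$ in a single step via the left adjoint of the whole word (which also forces you to invoke the $E$-analogue of (\ref{decompdp})), whereas the paper peels off one divided power at a time. Your bookkeeping is right: the left adjoint of $F^{(b)}E^{(a)}1_{\lambda}$ is indeed $q^{a(\lambda+a)-b(\nu+b)}F^{(a)}E^{(b)}1_{\nu}$, and $s_0+n_i(\nu+n_i)=(a+c-i)(\lambda+a+c-i)$, reproducing the paper's shift.
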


	\begin{proof}
		We prove the first isomorphism, the second one being proved similarly. There is an adjoint pair $\left(q^{a(\lambda+a)}F^{(a)}1_{\lambda+2a},E^{(a)}1_{\lambda}\right)$ providing an isomorphism
		\[
			\mathrm{Hom}^{\bullet}_{\mathcal W}\left(F^{(b)}E^{(a)}1_{\lambda}, F^{(d)}E^{(c)}1_{\lambda}\right) \simeq q^{a(\lambda+a)}\mathrm{Hom}^{\bullet}_{\mathcal W}\left(F^{(b)}1_{\lambda+2a},F^{(d)}E^{(c)}F^{(a)}1_{\lambda+2a}\right).
		\]
		Assume $\lambda \geqslant b-a=d-c$. In particular, we have $\lambda+a+c \geqslant 0$. Thus by (\ref{decomp}) we have an isomorphism
		\[
			E^{(c)}F^{(a)}1_{\lambda+2a} \simeq \bigoplus_{i=0}^{\mathrm{min}\left\{a,c\right\}} \qbin{\lambda+a+c}{i} F^{(a-i)}E^{(c-i)}1_{\lambda+2a}.
		\]
		Using the adjoint pair $\left(E^{(c-i)}1_{\lambda+2a},q^{-(c-i)(\lambda+2a+c-i)}F^{(c-i)}1_{\lambda+2(a+c-i)}\right)$, we deduce that there is an isomorphism
		\begin{multline*}
			\mathrm{Hom}^{\bullet}_{\mathcal W}\left(F^{(b)}E^{(a)}1_{\lambda}, F^{(d)}E^{(c)}1_{\lambda}\right) \simeq \\ \bigoplus_{i=0}^{\mathrm{min}\left\{a,c\right\}} q^{(a+c-i)(\lambda+a+c-i)} \qbin{\lambda+a+c}{i} \mathrm{Hom}^{\bullet}_{\mathcal W}\left(F^{(b)}F^{(c-i)}1_{\lambda+2(a+c-i)}, F^{(d)}F^{(a-i)}1_{\lambda+2(a+c-i)}\right).
		\end{multline*}
		Using the isomorphisms (\ref{decompdp}), we conclude that
		\begin{multline*}
			\mathrm{Hom}^{\bullet}_{\mathcal W}\left(F^{(b)}E^{(a)}1_{\lambda}, F^{(d)}E^{(c)}1_{\lambda}\right) \simeq \\  \bigoplus_{i=0}^{\mathrm{min} \left \{a,c\right \}} q^{(a+c-i)(\lambda+a+c-i)}\qbin{\lambda+a+c}{i}\qbin{b+c-i}{b}\qbin{a+d-i}{d} \mathrm{End}^{\bullet}_{\mathcal W}\left(F^{(a+d-i)}1_{\lambda+2(a+c-i)}\right).
		\end{multline*}
	\end{proof}

	\bigskip

	\section{Faithfulness of simple 2-representations}
	
	\subsection{Simple 2-representations} We now define the simple 2-representations of $\mathcal U$, following \cite{2km}. For $n \in \mathbb N$, and $k \in \lbrace 0,\ldots,n\rbrace$, we denote by $H_{k,n}$ the subalgebra of $H_n$ generated by $P_n^{\s_n}$, $\tau_1,\ldots,\tau_{k-1}$ and $x_1,\ldots,x_k$. We have an isomorphism of algebras $H_{k,n} \simeq H_k \otimes P_{n-k}^{\s_{n-k}}$, and a tower of algebras
	\[
		P_n^{\s_n} = H_{0,n} \subseteq H_{1,n} \subseteq \ldots \subseteq H_{n,n}=H_n.
	\]	
	The 2-representation $\cL(n)$ of $\mathcal U$ is defined as follows:
	\begin{itemize}
		\item for $k \in \lbrace0,\ldots,n\rbrace$, we define $\mathcal{L}(n)_{-n+2k} = H_{k,n}\mathrm{-proj}$, the category of finitely generated, projective and graded $H_{k,n}$-modules,
		\item the functor $E : \cL(n)_{-n+2k} \rightarrow \cL(n)_{-n+2(k+1)}$ is $\mathrm{ind}_{H_{k,n}}^{H_{k+1,n}}$ and the functor $F :\cL(n)_{-n+2k} \rightarrow \cL(n)_{-n+2(k-1)}$ is $q^{2k-n-1}\mathrm{res}_{H_{k-1,n}}^{H_{k,n}}$,
		\item the map $x : F \rightarrow F$ is left multiplication by $x_{k}$ on $\mathrm{res}_{H_{k-1,n}}^{H_{k,n}}$ and the map $\tau : F^2 \rightarrow F^2$ is left multiplication by $\tau_{k-1}$ on $\mathrm{res}_{H_{k-2,n}}^{H_{k,n}}$,
		\item the maps $\varepsilon$ and $\eta$ are the counit and unit of the canonical adjunction between induction and restriction.
	\end{itemize}

	\begin{prop}[\cite{2km}]
		The above data defines a 2-representation of $\mathcal U$ on $\cL(n)$.
	\end{prop}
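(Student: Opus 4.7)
The plan is to verify each of the five relations of Definition \ref{catsl2} in turn. Relations (\ref{sq}), (\ref{xt}) and (\ref{br}) express the defining relations of the affine nil Hecke algebra on the 2-morphisms $x$ and $\tau$ acting on powers of $F$. On $\cL(n)_{-n+2k}$, the iterated composition $F^j$ is, up to the total grading shift, the restriction functor $\mathrm{res}^{H_{k,n}}_{H_{k-j,n}}$, and the 2-morphisms $F^{i}xF^{j-i-1}$ and $F^{i}\tau F^{j-i-2}$ act by left multiplication by $x_{k-i}$ and $\tau_{k-i-1}$ respectively. These generators live in $H_{k,n}$ and satisfy the nil Hecke relations there, so relations (\ref{sq}), (\ref{xt}) and (\ref{br}) transfer immediately.

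For relation (\ref{ap}), the maps $\eta$ and $\varepsilon$ are by definition the unit and counit of the canonical adjunction $(\mathrm{ind}^{H_{k,n}}_{H_{k-1,n}}, \mathrm{res}^{H_{k,n}}_{H_{k-1,n}})$, so the triangle identities are automatic. A direct accounting of the grading shift $q^{2k-n-1}$ carried by $F$ shows that $\eta$ and $\varepsilon$ acquire the required degrees $1+\lambda$ and $1-\lambda$ on the weight-$\lambda$ component.

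The main obstacle is relation (\ref{weyl}), the invertibility of $\rho_\lambda$. The approach is to describe $EF1_\lambda$ and $FE1_\lambda$ on $H_{k,n}\mathrm{-proj}$ via their representing bimodules $H_{k,n}\otimes_{H_{k-1,n}}H_{k,n}$ and $H_{k+1,n}$ (viewed as an $(H_{k,n},H_{k,n})$-bimodule via the inclusion on both sides), and to compare them using a Mackey-style analysis of the tower $H_{k-1,n}\subset H_{k,n}\subset H_{k+1,n}$. The key input is that $H_{k+1,n}$ is free of rank $k+1$ over $H_{k,n}$, with basis $1, x_{k+1}, \ldots, x_{k+1}^{k}$ on one side and a $\tau$-basis on the other, yielding an explicit bimodule decomposition whose summands match the source and target of $\rho_\lambda$ exactly when one shifts by the correct grading prescribed by the weight $\lambda = -n+2k$.

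Once this decomposition is in place, I would verify that the component $\sigma$ of $\rho_\lambda$ identifies with the natural bimodule comparison map, and that the components $\varepsilon \circ Ex^i$ (or $x^iE\circ \eta$ in the dual case) realize the filtration of $H_{k+1,n}$ by the powers of $x_{k+1}$. Constructing the inverse then reduces to a linear-algebra identity over $P_n^{\s_n}$, made transparent by the Morita reduction (\ref{morita}). The hard part is purely bookkeeping: ensuring that the signs, gradings and multiplicities $[\lambda]$ appearing in $\rho_\lambda$ match those produced by the Mackey decomposition, and that the resulting square matrix of bimodule maps is invertible in every weight $\lambda = -n+2k$ with $0 \le k \le n$ (the endpoints $k=0$ and $k=n$ being limiting cases where one of the two summands on the target vanishes, in agreement with the piecewise definition of $\rho_\lambda$).
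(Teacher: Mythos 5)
The paper itself does not prove this proposition: it is quoted from \cite{2km}, so your sketch has to be measured against the argument there rather than against anything in this paper. Your division of labour is the right one and matches that argument in outline: relations (\ref{sq})--(\ref{br}) hold because $x$ and $\tau$ act by left multiplication by $x_k$ and $\tau_{k-1}$, which satisfy the nil Hecke relations inside $H_{k,n}$ and commute with the subalgebra one restricts to; relation (\ref{ap}) is the triangle identity for the induction--restriction adjunction, and the degree count with the shift $q^{2k-n-1}$ does give $\eta$ and $\varepsilon$ the degrees $1+\lambda$ and $1-\lambda$. That part of your proposal is fine.

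The gap is in the key input you invoke for relation (\ref{weyl}). It is not true that $H_{k+1,n}$ is free of rank $k+1$ over $H_{k,n}$ with basis $1,x_{k+1},\dots,x_{k+1}^{k}$: from $H_{k+1,n}\simeq H_{k+1}\otimes P_{n-k-1}^{\s_{n-k-1}}$ and $H_{k,n}\simeq H_k\otimes P_{n-k}^{\s_{n-k}}$ one gets one-sided freeness of rank $(k+1)(n-k)$, and the relevant powers of $x_{k+1}$ run up to $n-k-1$, not $k$. More seriously, a one-sided rank count that does not involve $n$ cannot produce what is actually needed, namely an $(H_{k,n},H_{k,n})$-bimodule decomposition whose multiplicities are quantum integers in $\lambda=-n+2k$ and whose direction flips with the sign of $\lambda$: for $2k\leqslant n$, up to the grading shifts dictated by the definition of $F$, one needs
\[
H_{k+1,n}\;\simeq\;\bigl(H_{k,n}\otimes_{H_{k-1,n}}H_{k,n}\bigr)\ \oplus\ \bigoplus_{i=0}^{n-2k-1} H_{k,n}\,x_{k+1}^{\,i},
\]
with the first summand embedded by $a\otimes b\mapsto a\tau_k b$, while for $2k\geqslant n$ it is instead $[2k-n]$ shifted copies of $H_{k,n}$ that split off $H_{k,n}\otimes_{H_{k-1,n}}H_{k,n}$, with complement $H_{k+1,n}$. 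The multiplicity $\vert n-2k\vert=\vert\lambda\vert$ is exactly where the highest weight $n$ enters and is the reason $\rho_\lambda$ is invertible on $\cL(n)$ but not for, say, the generic polynomial representation; your ``rank $k+1$'' lemma contains no $n$ and therefore cannot see this. Finally, even granting the correct decomposition, invertibility of the \emph{specific} map $\rho_\lambda$ of Definition \ref{catsl2} (built from $\sigma=\varepsilon FE\circ E\tau E\circ EF\eta$ and $\varepsilon\circ Ex^i$, resp.\ $x^iE\circ\eta$) still requires identifying these 2-morphisms with the bimodule maps above and checking triangularity with respect to the filtration by powers of $x_{k+1}$; you defer this, but it is the computational heart of the proof in \cite{2km}, not bookkeeping.
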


	Define
	\[
		\cL(n)\mathrm{-bim} = \bigoplus_{0\leqslant k,\ell \leqslant n} (H_{k,n},H_{\ell,n})\mathrm{-bim}
	\]
	where $(H_{k,n},H_{\ell,n})\mathrm{-bim}$ is the category of finitely generated graded $(H_{k,n},H_{\ell,n})$-bimodules. The structure of 2-representation of $\U$ on $\cL(n)$ induces functors $\Phi_n : \U1_\lambda \rightarrow \cL(n)\mathrm{-bim}$ for all $\lambda \in \mathbb{Z}$. For $k \in \lbrace 0,\ldots,n\rbrace$ and $a \in \lbrace 0,\ldots,k \rbrace$ we have
	\begin{align*}
		& \Phi_n\Big( F^{a}1_{-n+2k} \Big) = q^{a(2k-n-a)} H_{k,n} \, \text{ as } (H_{k-a,n},H_{k,n})\text{-bimodules},\\
		& \Phi_n\left(E^{a}1_{-n+2(k-a)}\right) = H_{k,n} \, \text{ as } (H_{k,n},H_{k-a,n})\text{-bimodules}. \\
	\end{align*}
	Let us now describe explicitly the images of the divided powers under $\Phi_n$. Given two integers $r<\ell \in \lbrace 1,\ldots,n\rbrace$, we put
	\begin{align*}
		& x_{[r,\ell]} = x_{r+1}x_{r+2}^2\ldots x_{\ell}^{\ell-r} \in P_{n}, \\
		& x'_{[r,\ell]}=(-1)^{\frac{(\ell-r)(\ell-r-1)}{2}}x_r^{\ell-r} x_{r+1}^{\ell-r-1}\ldots x_{\ell-1} \in P_{n},\\
		& e_{[r,\ell]} = x_{[r,\ell]}\tau_{\omega_0[r,\ell]}\in H_{\ell,n}, \\
		& e'_{[r,\ell]} = \tau_{\omega_0[r,\ell]}x'_{[r,\ell]} \in H_{\ell,n}.
	\end{align*}
	Note that $e_{[r,\ell]}$ and $e'_{[r,\ell]}$ are orthogonal idempotents of $H_{\ell,n}$. Then we have
	\begin{align*}
		& \Phi_n\left(F^{(a)}1_{-n+2k}\right) = q^{a(2k-n-a) -\frac{a(a-1)}{2}}e_{[k-a+1,k]}H_{k,n} \, \text{ as } (H_{k-a,n},H_{k,n})\text{-bimodules},\\
		& \Phi_n\left(E^{(a)}1_{-n+2(k-a)}\right) = q^{-\frac{a(a-1)}{2}}H_{k,n}e'_{[k-a+1,k]} \, \text{ as } (H_{k,n},H_{k-a,n})\text{-bimodules}.
	\end{align*}
	
	A crucial fact about the simple 2-representations is the following universal property.
	
	\begin{thm}\cite[Proposition 5.15]{2km}\label{univ}
		Let $\mathcal V$ be an integrable 2-representation, and let $M \in \mathcal V_{n}$ be such that $E(M)=0$. Then there is a unique morphism of 2-representations $R_M : \cL(n) \rightarrow \mathcal V$ such that $R_M(P_{n})=M$, where $P_n$ is seen as an object of $\cL(n)_n$, with the action of $H_n$ being the polynomial representation.
	\end{thm}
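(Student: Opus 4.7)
The plan is to address uniqueness first and existence second. For uniqueness, the key claim is that $R_M$ is completely determined by the requirement $R_M(P_n) = M$ together with the compatibility of $R_M$ with the 1-morphisms $E, F$ and the 2-morphisms $x, \tau, \varepsilon, \eta$ of $\U$. Using the decomposition $P_n \simeq P_k \otimes K[x_{k+1}, \ldots, x_n]$ and $H_{k,n} \simeq H_k \otimes P_{n-k}^{\s_{n-k}}$, one sees that $F^{n-k}(P_n)$, viewed via restriction as an $H_{k,n}$-module, is a sum of shifts of the unique indecomposable projective of $H_{k,n}$ — in particular a projective generator of $\cL(n)_{-n+2k}$. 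The natural isomorphism $\beta : R_M F \xrightarrow{\sim} F R_M$ forces $R_M(F^{n-k}(P_n)) \simeq F^{n-k}(M)$ for every $k$, and then every object of $\cL(n)_{-n+2k}$, being a summand of shifts of $F^{n-k}(P_n)$, has its image determined. On morphisms, compatibility with $x, \tau, \varepsilon, \eta$ — together with the fact that the endomorphisms of $F^{n-k}(P_n)$ are generated by these structure 2-morphisms acting through the $\U$-action — pins down $R_M$ uniquely.

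For existence, I construct $R_M$ weight space by weight space. Set $R_M(F^{n-k}(P_n)) := F^{n-k}(M)$ and extend by direct sums, shifts, and direct summands. To transport morphisms, I compute the endomorphism algebra $\mathrm{End}^{\bullet}_{\cL(n)}(F^{n-k}(P_n))$ via the adjunction $(E^{n-k}, F^{n-k})$ up to shift. Applying Proposition \ref{adj} and using that $E(P_n) = 0$ in $\cL(n)$, this algebra reduces to a direct sum, with explicit shifts, of copies of $\mathrm{End}^{\bullet}(P_n) \simeq P_n^{\s_n}$, the latter identification coming from the Morita equivalence (\ref{morita}). The hypothesis $E(M) = 0$ together with integrability of $\mathcal V$ permits an entirely parallel reduction on the $\mathcal V$-side, producing a natural algebra homomorphism $\mathrm{End}^{\bullet}_{\cL(n)}(F^{n-k}(P_n)) \to \mathrm{End}^{\bullet}_{\mathcal V}(F^{n-k}(M))$ that is compatible with Hom-spaces between different weight spaces. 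This data is exactly what is needed to promote the assignment on objects to a functor, and the natural isomorphisms $\alpha, \beta$ are then immediate from the construction.

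The main obstacle will be the coherent verification that $R_M$, thus constructed, is genuinely a morphism of 2-representations, i.e., that it respects $x, \tau, \varepsilon, \eta$ across all weight spaces simultaneously. This reduces to the statement that the parallel decompositions of Hom-spaces on the two sides are equivariant for the $\U$-action, which ultimately rests on Proposition \ref{adj} holding uniformly in any 2-representation. The integrability assumption on $\mathcal V$ is essential here: without it, $E^aF^b(M)$ need not decompose in the expected way, and the parallel between $\cL(n)$-side and $\mathcal V$-side computations would break down.
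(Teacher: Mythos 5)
A preliminary remark: the paper does not prove this statement at all — it is quoted from \cite[Proposition 5.15]{2km} — so your proposal has to stand on its own, and as it stands it is a plan rather than a proof. The decisive gap is in the existence step. What Proposition \ref{adj} (or, more directly, adjunction together with the decomposition (\ref{decomp}) applied to $M$ and the hypothesis $E(M)=0$) gives you is only an isomorphism of \emph{graded vector spaces}, exhibiting $\mathrm{End}^{\bullet}_{\mathcal V}(F^{n-k}(M))$ as a sum of shifted copies of $\mathrm{End}^{\bullet}_{\mathcal V}(M)$, and similarly on the $\cL(n)$ side with coefficients $\mathrm{End}^{\bullet}_{\cL(n)}(P_n)\simeq P_n^{\s_n}$. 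This does not produce the graded \emph{algebra} homomorphism $\mathrm{End}^{\bullet}_{\cL(n)}(F^{n-k}(P_n))\rightarrow \mathrm{End}^{\bullet}_{\mathcal V}(F^{n-k}(M))$ your construction requires: nothing in your argument specifies where the coefficient algebra $P_n^{\s_n}$ (equivalently, the symmetric-function part $P_n^{\s_n}\subset H_{k,n}$, equivalently the images of the bubble 2-morphisms in $\mathrm{End}^{\bullet}_{\U}(1_n)$) is to be sent inside $\mathrm{End}^{\bullet}_{\mathcal V}(M)$, nor why multiplication is respected, nor why the defining relations of $H_{k,n}$ hold after transport. Making the bubbles act on $M$ and verifying these relations is precisely where $E(M)=0$ and integrability do real work, and it is the actual content of Rouquier's proposition; you name this coherence check as ``the main obstacle'' and then defer it, so the existence half is not established.

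The uniqueness half has a related, unproved ingredient: you assert that $\mathrm{End}^{\bullet}_{\cL(n)}(F^{n-k}(P_n))$ is generated by the structure 2-morphisms acting through the $\U$-action. This is true but not free. Besides the nil Hecke part coming from $x$ and $\tau$ on $F^{n-k}$, one needs surjectivity of the bubble map $\mathrm{End}^{\bullet}_{\U}(1_n)\rightarrow \mathrm{End}^{\bullet}_{\cL(n)}(P_n)\simeq P_n^{\s_n}$, together with an argument in the spirit of Lemma \ref{polsym} showing that these two pieces generate the whole endomorphism algebra of the restricted module. Without this input, the single condition $R_M(P_n)=M$ does not even pin down $R_M$ on the positive-degree endomorphisms of $P_n$, so the claimed uniqueness does not yet follow.
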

	
	\medskip

	\subsection{Faithfulness} %We will denote by $K(\U)$ the 2-category with same set of objects $\mathbb Z$ as $\U$, and with $\mathrm{Hom}_{K(\U)}(\lambda,\lambda') = K\left(\mathrm{Hom}_{\U}(\lambda,\lambda')\right)$ for all $\lambda, \lambda' \in \mathbb{Z}$.
	The goal of this subsection is to prove the following faithfulness result.
	
	\begin{thm}\label{lifthom}
		\begin{enumerate}
			\item\label{liftnh} Let $C$ be a complex of 1-morphisms of $\U$. If $\Phi_n(C)$ is null-homotopic for all $n \in \mathbb{N}$, then $C$ is null-homotopic.
			\item\label{lifthe} Let $f$ be a morphism between complexes of 1-morphisms of $\U$. If $\Phi_n(f)$ is a homotopy equivalence for all $n \in \mathbb{N}$, then $f$ is a homotopy equivalence.
		\end{enumerate}
	\end{thm}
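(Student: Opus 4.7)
Part (2) follows from part (1) applied to the cone of $f$, so I focus on (1). The strategy exploits the Krull--Schmidt structure of $\U$ provided by Theorem \ref{indecofU}. Any complex $C$ of 1-morphisms of $\U$ admits, in the category of complexes, a decomposition $C \simeq C_{\min} \oplus C_{\mathrm{contr}}$ where $C_{\mathrm{contr}}$ is a direct sum of elementary contractibles $0 \to X \xrightarrow{\mathrm{id}} X \to 0$ and $C_{\min}$ is \emph{minimal}, meaning that every matrix entry of each differential, viewed between indecomposable summands, lies in the radical of the corresponding $\mathrm{Hom}$-space. This decomposition is obtained by iteratively extracting contractible summands, and works degree by degree since each $C^r$ is a finite direct sum of indecomposables. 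The complex $C$ is null-homotopic if and only if $C_{\min}=0$; so it suffices to show that if $C_{\min}\neq 0$, then $\Phi_n(C)$ fails to be null-homotopic for some $n$.

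The core of the argument reduces to two claims about $\Phi_n$: \emph{(A)} for each indecomposable 1-morphism $X$ of $\U$, $\Phi_n(X)$ is indecomposable in $\cL(n)\mathrm{-bim}$ for every $n$ large enough with parity matching the source weight of $X$; \emph{(B)} for two non-isomorphic indecomposable 1-morphisms $X,Y$ of $\U$, $\Phi_n(X) \not\simeq \Phi_n(Y)$ for every $n$ large enough with the appropriate parity. Granting these, fix $r$ with $C_{\min}^r \neq 0$, pick an indecomposable summand $X$ of $C_{\min}^r$, and choose $n$ large enough (appropriate parity) that (A) and (B) apply to all finitely many indecomposable summands occurring in $C_{\min}^{r-1}\oplus C_{\min}^r\oplus C_{\min}^{r+1}$. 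Then $\Phi_n(C_{\min})$ is itself minimal in $\cL(n)\mathrm{-bim}$: matrix entries between summands whose preimages are isomorphic in $\U$ came from zero entries in $C_{\min}$ (by minimality), while those between preimages that are non-isomorphic in $\U$ still land between non-isomorphic indecomposables by (A) and (B). Since $\Phi_n(X)\neq 0$ is an indecomposable summand of $\Phi_n(C_{\min})^r$, this minimal complex is nonzero and therefore not null-homotopic; hence neither is $\Phi_n(C)\simeq \Phi_n(C_{\min})\oplus \Phi_n(C_{\mathrm{contr}})$.

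To establish (A) and (B), I rely on graded-dimension computations. By Theorem \ref{indecofU}, every indecomposable 1-morphism $X$ of $\U$ has the explicit form $q^sE^{(a)}F^{(b)}1_\lambda$ or $q^sF^{(b)}E^{(a)}1_\lambda$ with $|X|\in 1+q\mathbb N[q]$; for distinct indecomposables $X\not\simeq Y$ one similarly has $\langle X,Y\rangle \in q\mathbb N[q]$. Using Proposition \ref{adj} together with the explicit description of $\Phi_n$ on divided powers via the idempotents $e_{[r,\ell]}$ and $e'_{[r,\ell]}$, one computes $|\Phi_n(X)|$ and $\langle \Phi_n(X),\Phi_n(Y)\rangle$ as sums indexed by $i$ whose upper limit is a $\min$ depending on $n$. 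The key stabilization observation is that for $n$ large of the appropriate parity these $\min$'s are attained at $i$-values independent of $n$, and the contributions from $\mathrm{End}^\bullet$ of the divided powers $F^{(k)}1_\mu$ in $\cL(n)\mathrm{-bim}$ match their counterparts in $\U$; consequently $|\Phi_n(X)|=|X|$ and $\langle \Phi_n(X),\Phi_n(Y)\rangle=\langle X,Y\rangle$. The first equality forces $\mathrm{End}^\bullet(\Phi_n(X))$ to be local with $K$ concentrated in degree $0$, yielding (A); the second forbids any degree-$0$ isomorphism between $\Phi_n(X)$ and $\Phi_n(Y)$, yielding (B). The main obstacle is this stabilization step: one must precisely identify $\mathrm{End}^\bullet_{\cL(n)\mathrm{-bim}}(F^{(k)}1_\mu)$ and verify that the formulas of Proposition \ref{adj} really match the corresponding computation intrinsic to $\U$ once $n$ is sufficiently large, which requires care with the grading and with the parity constraint imposed by the weight lattice of $\cL(n)$.
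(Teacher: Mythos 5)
Your reduction of the theorem to the two facts (A) and (B) is a legitimate alternative to the paper's route: instead of the general Krull--Schmidt lifting machinery (lifting split surjections/injections through the $\Phi_n$, an inductive construction of the homotopy for bounded-above/below complexes, and a Gaussian-elimination step at degree $0$ to split an unbounded complex), you invoke a minimal-model decomposition $C\cong C_{\min}\oplus C_{\mathrm{contr}}$ and argue that a nonzero minimal complex cannot become null-homotopic under a functor that preserves indecomposability, non-isomorphy and radicals. This can be made to work, but two points need more care than you give them. First, the existence of the decomposition for \emph{unbounded} complexes is exactly the delicate point the paper's proof is engineered to avoid; your degreewise-finiteness remark is the right idea (each extraction is an isomorphism of complexes touching only two adjacent degrees, and only finitely many extractions touch a given degree), but it should be spelled out. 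Second, your claim that differential entries between isomorphic indecomposable summands are \emph{zero} ``by minimality'' is not a consequence of minimality (minimal only means radical entries); it is true here only because $\mathrm{End}^0_{\U}(X)=K$ for $X$ indecomposable, i.e.\ because $|X|\in 1+q\mathbb{N}[q]$ from Theorem \ref{indecofU}. Relatedly, choosing $n$ adapted only to degrees $r-1,r,r+1$ does not make $\Phi_n(C_{\min})$ minimal, as you assert; it only gives radical entries around degree $r$, which fortunately is all the contradiction $\mathrm{id}_{\Phi_n(X)}=p(d^{r-1}h+hd^{r})i\in\mathrm{rad}$ requires.

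The genuine gap is in your justification of (A) and (B). The claimed stabilization $|\Phi_n(X)|=|X|$ and $\langle\Phi_n(X),\Phi_n(Y)\rangle=\langle X,Y\rangle$ is false, already for $X=1_{\lambda}$: $\mathrm{End}^{\bullet}_{\U}(1_{\lambda})$ is the bubble algebra (symmetric functions), while $\mathrm{End}^{\bullet}_{\cL(n)\mathrm{-bim}}(\Phi_n(1_{-n+2k}))$ contains $P_n^{\s_k\times\s_{n-k}}$, and these graded dimensions differ for every $n$ (and even in the limit $n\to\infty$); in general $\mathrm{End}^{\bullet}_{\cL(n)\mathrm{-bim}}(\Phi_n(E^{(a)}1_{\lambda}))\simeq P_n^{\s_k\times\s_a\times\s_{n-k-a}}$ is much larger than $\mathrm{End}^{\bullet}_{\U}(E^{(a)}1_{\lambda})$. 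So the functors $\Phi_n$ are nowhere near full, and ``matching'' graded dimensions cannot be the mechanism. What is actually needed, and what the paper proves, is only the weaker statements $|\Phi_n(X)|\in 1+q\mathbb{N}[q]$ and $\langle\Phi_n(X),\Phi_n(Y)\rangle\in q\mathbb{N}[q]$ for $n$ large of the right parity. These follow from a lowest-degree analysis of the sums in Proposition \ref{adj} (all $q$-prefactors are strictly positive except a single term contributing $1$, using $\lambda\geqslant b-a$, resp.\ $\lambda\leqslant b-a$) \emph{combined with} the identification of $\mathrm{End}^{\bullet}_{\cL(n)\mathrm{-bim}}$ of the image of a divided power as a partially symmetric polynomial ring, which rests on the surjectivity of the multiplication map $P_n^{\s_{k+\ell}\times\s_r}\otimes P_n^{\s_k\times\s_{\ell+r}}\rightarrow P_n^{\s_k\times\s_{\ell}\times\s_r}$ (the paper's Lemma \ref{polsym}); without this cyclicity the degree-$0$ part of that endomorphism ring is not a priori one-dimensional, and indecomposability of $\Phi_n(X)$ does not follow. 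You correctly flag this identification as ``the main obstacle,'' but you do not resolve it, and the resolution you propose (exact matching with $\U$) is incorrect; this is precisely the content of the paper's Propositions \ref{indec} and \ref{iso}, which your proposal would need to reproduce.
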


	\subsubsection{Generalities about Krull-Schmidt categories}  Theorem \ref{lifthom} will be a consequence of the following general result.
	
	\begin{thm}\label{liftnhgen}
		Let $\mathcal C, (\mathcal D_i)_{i \in I}$ be Krull-Schmidt categories, and let $\left(\Phi_i : \mathcal C \rightarrow \mathcal D_i\right)_{i \in I}$ be linear functors. Assume that for any finite collection $X_1,\ldots,X_n$ of indecomposable and pairwise non-isomorphic objects of $\mathcal C$, there exists $i \in I$ such that:
		\begin{enumerate}
			\item  $\Phi_i(X_1),\ldots,\Phi_i(X_n)$ are indecomposable and pairwise non-isomorphic,
			\item for all $j \in \lbrace 1,\ldots,n\rbrace$, the morphism of $K$-algebras $\mathrm{End}_{\mathcal C}(X_j) \rightarrow \mathrm{End}_{\mathcal D_i}(\Phi_i(X_j))$ induced by $\Phi_i$ is local.
		\end{enumerate}
		Let $C \in \mathrm{Comp}(\mathcal C)$. Then $C$ is null-homotopic if and only if for all $i \in I$, $\Phi_i(C)$ is null-homotopic.
	\end{thm}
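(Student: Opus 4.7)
The ``only if'' direction is immediate: any null-homotopy of $C$ is mapped by $\Phi_i$ to a null-homotopy of $\Phi_i(C)$. For the converse, my plan is to exploit the Krull-Schmidt structure of $\mathcal{C}$ to reduce null-homotopy to a local condition on indecomposable summands that is preserved by the $\Phi_i$, and then argue by contrapositive.

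The first step would be to establish the following \emph{local obstruction lemma}: for any complex $D$ in a Krull-Schmidt category, if there exists an indecomposable summand $X$ of some $D^r$ such that no entry of $d^{r-1}$ with target $X$ and no entry of $d^r$ with source $X$ is an isomorphism between isomorphic indecomposables (i.e., $X$ admits no ``trivial pair''), then $D$ is not null-homotopic. I would prove this by a direct radical argument: a null-homotopy $h$ would yield $\mathrm{id}_X = \pi_X (d^{r-1}h^r + h^{r+1}d^r) \iota_X$, and each term on the right-hand side decomposes as a sum of compositions $X \to Y \to X$ factoring through an indecomposable summand $Y$ of $D^{r-1}$ or $D^{r+1}$. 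When $Y \not\cong X$, such a composition lies in the Jacobson radical of the local ring $\mathrm{End}(X)$, since otherwise $X$ would be a direct summand of $Y$; when $Y \cong X$, the ``no trivial pair'' hypothesis forces one factor to be a non-isomorphism between isomorphic indecomposables, hence in the radical, again putting the composition in the radical. This would produce $\mathrm{id}_X \in \mathrm{rad}(\mathrm{End}(X))$, contradicting the localness of $\mathrm{End}(X)$.

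Next, assuming $C$ is not null-homotopic, I would iteratively split off trivial direct summands $X \xrightarrow{\mathrm{id}} X$ via Gaussian elimination whenever a trivial pair appears in a differential, obtaining a decomposition $C \cong T \oplus C'$ with $T$ a direct sum of such trivial two-term complexes (hence null-homotopic) and $C'$ containing no trivial pair anywhere; necessarily $C' \neq 0$. Choosing $r$ with $C'^r \neq 0$, I would let $X_1, \ldots, X_n$ represent the isomorphism classes of indecomposable summands of $C'^{r-1} \oplus C'^r \oplus C'^{r+1}$ and invoke the hypothesis to pick $i \in I$ satisfying (1) and (2) for this family. The ``no trivial pair'' condition at position $r$ would then transfer from $C'$ to $\Phi_i(C')$: the non-isomorphy-preservation in (1) implies that entries of the differential of $\Phi_i(C')$ between isomorphic copies of some $\Phi_i(X_j)$ come from entries $X_j \to X_j$ in $C'$, which are non-isomorphisms by the minimality of $C'$, and the local condition (2) ensures that these remain non-isomorphisms after applying $\Phi_i$. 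Applying the local obstruction lemma to $\Phi_i(C')$ would show it is not null-homotopic, and since $\Phi_i(T)$ is null-homotopic, neither is $\Phi_i(C) \cong \Phi_i(T) \oplus \Phi_i(C')$, contradicting the hypothesis.

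The main obstacle will be making the iterative Gaussian elimination rigorous for \emph{unbounded} complexes: the cancellation process need not terminate and a Zorn's lemma argument on the poset of partial trivial direct-summand decompositions would be required to produce a maximal splitting $C \cong T \oplus C'$. In the bounded case, the construction terminates after finitely many steps and the argument is entirely elementary.
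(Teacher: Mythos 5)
Your core mechanism is sound and genuinely different from the paper's. The local obstruction lemma is correct: in the expression $\mathrm{id}_X=\pi_X(d^{r-1}h^r+h^{r+1}d^r)\iota_X$, each summand factors as $X\to Y\to X$ with $Y$ indecomposable, and such a composite is a non-unit of the local ring $\mathrm{End}(X)$ either because $Y\not\cong X$ or because the differential entry involved is a non-isomorphism between indecomposables with local endomorphism rings; so a null-homotopy would write $\mathrm{id}_X$ as a finite sum of radical elements. The transfer of the ``no trivial pair'' condition along $\Phi_i$ using hypotheses (1) and (2) is also correct (non-isomorphic summands stay non-isomorphic, and non-units stay non-units by locality of the algebra maps). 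By contrast, the paper never passes to a minimal model: it shows that the $\Phi_i$ detect split surjections and split injections (Proposition \ref{liftss}, Corollary \ref{liftssi}), proves the theorem for complexes bounded above or below by building the homotopy inductively (Proposition \ref{bounded}), and handles an unbounded $C$ by Gaussian elimination at degree $0$ only: since $C^0$ has finitely many indecomposable summands, after finitely many eliminations one may assume $d_C^0=0$, so $C$ splits as a bounded-above plus a bounded-below complex and the bounded case applies.

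The one genuine gap is exactly the step you flag: producing, for an unbounded $C$, a splitting $C\cong T\oplus C'$ with $T$ contractible and $C'$ free of isomorphism entries everywhere. Zorn's lemma on partial trivial-summand decompositions does not obviously work: for a chain of partial splittings there is no evident upper bound, since exhibiting the union of the split-off trivial pieces as a direct summand of the complex is precisely the convergence problem you are trying to solve. What does work is a stabilization argument: a Gaussian elimination at position $(r,r+1)$ changes the matrix entries of the differential only at that position (at $(r-1,r)$ and $(r+1,r+2)$ the new entries form a subset of the old ones) and strictly decreases the finite number of indecomposable summands in degrees $r$ and $r+1$; hence one can process the positions of $\mathbb{Z}$ one at a time, each position requires only finitely many eliminations and is never spoiled by later ones, and each degree of the complex and of the comparison isomorphism is modified only finitely often, so the limit splitting $C\cong T\oplus C'$ is well defined degreewise. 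With that replacement your argument goes through; alternatively, one can avoid minimal models of unbounded complexes altogether, as the paper does, which is what makes its unbounded step so short.
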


	Let us introduce some terminology that we will use throughout the proof. With the notations of Theorem \ref{liftnhgen}, given a finite collection $X_1,\ldots,X_n$ of indecomposable and pairwise non-isomorphic objects of $\mathcal C$, we say that an element $i \in I$ is \textit{adapted} to $X_1,\ldots,X_n$ if conditions (1) and (2) of Theorem \ref{liftnhgen} are satisfied. Similarly, given an object $M$ of $\mathcal C$, we say that $i \in I$ is adapted to $M$ if it is adapted to the indecomposable summands of $M$. Given an arrow $f : M \rightarrow N$ of $\mathcal C$, we say that $i \in I$ is adapted to $f$ if it is adapted to $M\oplus N$.\\
	
	The first step to prove Theorem \ref{liftnhgen} is to prove a lifting result for split injections and split surjections. This will be based on the following Lemma.
	
	\begin{comment}
	\begin{lem}\label{matloc}
		Let $A$ be a local $K$-algebra with maximal ideal $\mathfrak m$. Let $B \in \mathrm{Mat}_{m,n}(A)$. Then there exists $P \in \mathrm{GL}_m(A)$ and $Q \in \mathrm{GL}_n(A)$ such that:
		\[
		PBQ = \left( \begin{matrix}
		I_r & 0 \\
		0 & B'
		\end{matrix}\right)
		\]
		for some integer $r$ and some matrix $B'$ with coefficients in $\mathfrak m$.
	\end{lem}
	
	\begin{proof}
		We will write $B \sim C$ if there exists $P \in \mathrm{GL}_m(A)$ and $Q \in \mathrm{GL}_n(A)$ such that $N = PMQ$. We proceed by induction on the size of $B$. If $B \in \mathrm{Mat}_{m,n}(\mathfrak{m})$, then $B$ is already in the desired form with $r=0$ and $B'=B$. Otherwise, at least one of the coefficients of $B$ is invertible. By multiplying $B$ on the left and right with appropriate permutation matrices, we have
		\[
		B \sim \left( \begin{matrix}
		x & \star \\
		\star & \star
		\end{matrix}\right)
		\]
		with $x$ an invertible element of $A$. We can then perform row and column operations to deduce that
		\[
		B \sim \left( \begin{matrix}
		1 & 0 \\
		0 & B_1
		\end{matrix}\right)
		\]
		for some matrix $B_1 \in \mathrm{Mat}_{m-1,n-1}(A)$. The result now follows by induction. 
	\end{proof}
	\end{comment}
	
	\begin{lem}\label{ks}
		Let $\mathcal C$ be a Krull-Schmidt category, and let $f : M \rightarrow N$, $g : N \rightarrow L$ be arrows in $\mathcal C$. Assume given a decomposition $N = \oplus_j N_j$ in indecomposable objects. Denote by $i_j : N_j \rightarrow N, p_j : N \rightarrow N_j$ the inclusion and projection morphisms associated to this decomposition.
		\begin{enumerate}
			\item Assume that $L$ is indecomposable and that $gf$ is a split surjection. Then there exists $j$ such that $gi_j$ is an isomorphism and $p_jf$ is a split surjection.
			\item Assume that $M$ is indecomposable and that $gf$ is a split injection. Then there exists $j$ such that $gi_j$ is a split injection and $p_jf$ is an isomorphism.
		\end{enumerate}
	\end{lem}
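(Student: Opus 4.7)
The plan is to prove (1) directly from the local structure of $\mathrm{End}_{\mathcal C}(L)$, and to deduce (2) by dualizing (either by applying (1) in $\mathcal C^{\mathrm{op}}$ or by running the same argument with a retraction of $gf$ in place of a section). The core tool is the standard Krull--Schmidt principle that in a local ring a finite sum equal to a unit must have at least one unit summand, combined with the fact that the only idempotents in a local ring are $0$ and $1$.

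For (1), I would start from a section $h : L \to M$ of $gf$ (so $gfh = \mathrm{id}_L$) and use the decomposition identity $\mathrm{id}_N = \sum_j i_jp_j$, a finite sum since $\mathcal C$ is Krull--Schmidt, to rewrite
\[
\mathrm{id}_L \;=\; gfh \;=\; \sum_j (gi_j)(p_jfh) \;\in\; \mathrm{End}_{\mathcal C}(L).
\]
Since $L$ is indecomposable, $\mathrm{End}_{\mathcal C}(L)$ is local, so some summand $(gi_j)(p_jfh)$ is invertible. Fix such a $j$ and write $\alpha = gi_j$, $\beta = p_jfh$.

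The next step is to promote the fact that $\alpha\beta$ is an isomorphism into the fact that $\alpha$ itself is an isomorphism. After replacing $\beta$ by $\beta(\alpha\beta)^{-1}$ I may assume $\alpha\beta = \mathrm{id}_L$. Then $\beta\alpha$ is an idempotent of the local ring $\mathrm{End}_{\mathcal C}(N_j)$, hence equal to $0$ or to $\mathrm{id}_{N_j}$; the value $0$ is excluded because $\beta = \beta\alpha\beta$ would then force $\beta = 0$, contradicting $\alpha\beta = \mathrm{id}_L$. Therefore $\alpha = gi_j$ and the rescaled $\beta$ are mutually inverse isomorphisms, and in particular the original $p_jfh$ (which differs from the rescaled $\beta$ by a unit of $\mathrm{End}_{\mathcal C}(L)$) is itself an isomorphism. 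Setting $s = h(p_jfh)^{-1} : N_j \to M$ then yields $p_jfs = \mathrm{id}_{N_j}$, so $p_jf$ is a split surjection, as required.

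For (2), the dual argument applies verbatim: start from a retraction $r : L \to M$ of $gf$, expand $\mathrm{id}_M = rgf = \sum_j (rgi_j)(p_jf)$ inside the local ring $\mathrm{End}_{\mathcal C}(M)$, and run the same local-ring manipulation (now with $N_j$ providing the intermediate indecomposable) to obtain a $j$ for which $p_jf$ is an isomorphism and $gi_j$ is a split injection. I do not expect any genuine obstacle; the only mildly subtle point is the idempotent-in-a-local-ring step, and once the decomposition identity is written down the rest is bookkeeping.
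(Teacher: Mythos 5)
Your argument is correct and is essentially the paper's proof: both expand $\mathrm{id}_L=\sum_j (gi_j)(p_jfu)$ using a section of $gf$, invoke locality of $\mathrm{End}_{\mathcal C}(L)$ to find an invertible summand, and then use locality of $\mathrm{End}_{\mathcal C}(N_j)$ to upgrade $gi_j$ from split surjection to isomorphism, treating (2) by duality. The only difference is that you spell out explicitly the idempotent-in-a-local-ring step that the paper compresses into ``since $N_j$ is indecomposable, it follows that $gi_j$ is an isomorphism.''
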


	\begin{proof}
		We prove the first statement, the second one being similar. Fix a right inverse $u$ of $gf$. Then we have:
		\[
			\sum_j gi_jp_jfu = \mathrm{id}_L.
		\]
		Since $\mathrm{End}_{\mathcal C}(L)$ is a local $K$-algebra, there exists $j$ such that $gi_jp_jfu$ is invertible. Thus $gi_j$ is a split surjection. Since $N_j$ is indecomposable, it follows that $gi_j$ is an isomorphism. Hence $p_jfu$ is invertible, so $p_jf$ is a split surjection.
	\end{proof}

	\begin{prop}\label{liftss}
		Let $\mathcal C, \mathcal D$ be Krull-Schmidt categories, and let $\Phi : \mathcal C \rightarrow \mathcal D$ be a functor such that:
		\begin{enumerate}
			\item for any indecomposable object $X \in \mathcal C$, $\Phi(X)$ is indecomposable and the morphism of $K$-algebras $\mathrm{End}_{\mathcal C}(X) \rightarrow \mathrm{End}_{\mathcal D}(\Phi(X))$ induced by $\Phi$ is local,
			\item for any indecomposable objects $X,Y \in \mathcal{C}$, $X \simeq Y$ if and only if $\Phi(X) \simeq \Phi(Y)$.
		\end{enumerate}
		Let $f : M \rightarrow N$ be a morphism in $\mathcal C$. Then $f$ is a split surjection (resp. injection) if and only if $\Phi(f)$ is a split surjection (resp. injection).
	\end{prop}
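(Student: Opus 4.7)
The ``only if'' direction is immediate: additive functors preserve split surjections and split injections, since the image under $\Phi$ of a section (resp.\ retraction) of $f$ is a section (resp.\ retraction) of $\Phi(f)$. For the converse I will treat the split surjection case, the split injection case being entirely dual (using Lemma \ref{ks}(2) in place of Lemma \ref{ks}(1)). The argument goes by induction on the number $n$ of indecomposable summands in a Krull--Schmidt decomposition $N = \bigoplus_{k=1}^{n} N_k$; the case $n=0$ is trivial.

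For the inductive step, suppose $\Phi(f)$ is a split surjection and fix a decomposition $M = \bigoplus_i M_i$ into indecomposables. By hypothesis (1), both of these remain decompositions into indecomposables after applying $\Phi$. Let $d_n : N \to N_n$ be the projection onto the last summand; then $\Phi(d_n f) = \Phi(d_n)\Phi(f)$ is a split surjection onto the indecomposable $\Phi(N_n)$. Applying Lemma \ref{ks}(1) in $\mathcal D$ to the factorization $\Phi(M) \xrightarrow{\mathrm{id}} \Phi(M) \xrightarrow{\Phi(d_n f)} \Phi(N_n)$, with the middle object decomposed as $\bigoplus_i \Phi(M_i)$, produces an index $i$ such that $\Phi(d_n f a_i) : \Phi(M_i) \to \Phi(N_n)$ is an isomorphism, where $a_i$ is the canonical inclusion $M_i \hookrightarrow M$.

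The crucial point is to upgrade this to an isomorphism in $\mathcal C$, and this is the main obstacle of the proof: both hypotheses (1) and (2) are essential here. By hypothesis (2), $M_i \simeq N_n$; fix any isomorphism $\psi : N_n \xrightarrow{\sim} M_i$ in $\mathcal C$. The endomorphism $\psi \circ d_n f a_i$ of $M_i$ maps under $\Phi$ to a unit in $\mathrm{End}_{\mathcal D}(\Phi(M_i))$. Since by (1) the algebra morphism $\mathrm{End}_{\mathcal C}(M_i) \to \mathrm{End}_{\mathcal D}(\Phi(M_i))$ is a local morphism between local rings, an element is a unit if and only if its image is; hence $\psi \circ d_n f a_i$ is a unit of $\mathrm{End}_{\mathcal C}(M_i)$, which forces $d_n f a_i : M_i \to N_n$ to be an isomorphism in $\mathcal C$.

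With $d_n f a_i$ invertible in $\mathcal C$, a Schur-complement argument finishes the proof. Write $M = M_i \oplus M'$ and $N = N_n \oplus N'$ with $M' = \bigoplus_{i' \neq i} M_{i'}$ and $N' = \bigoplus_{k<n} N_k$, and represent $f$ as a block matrix with invertible top-left entry $d_n f a_i$. Standard row and column operations (triangular automorphisms built from the identity and off-diagonal blocks fed into $(d_n f a_i)^{-1}$) produce automorphisms $u$ of $M$ and $v$ of $N$ such that $v f u = (d_n f a_i) \oplus g$ for some morphism $g : M' \to N'$. Applying $\Phi$ shows $\Phi(g)$ is a split surjection, so by the induction hypothesis applied to $N'$, which has $n-1$ indecomposable summands, $g$ is a split surjection in $\mathcal C$. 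Hence $vfu$, and therefore $f$, is a split surjection. The split injection statement is proved by the dual argument, replacing Lemma \ref{ks}(1) by Lemma \ref{ks}(2) and exchanging the roles of $M$ and $N$.
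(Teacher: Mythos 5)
Your proof is correct and follows essentially the same route as the paper: induction on the number of indecomposable summands of $N$, applying Lemma \ref{ks}(1) to $\Phi(M)\xrightarrow{\mathrm{id}}\Phi(M)\xrightarrow{\Phi(pf)}\Phi(N_{\text{top}})$, using hypothesis (2) to get an isomorphism in $\mathcal C$ and hypothesis (1) (locality) to lift invertibility, then a block-triangular reduction and the induction hypothesis. The only cosmetic difference is that you pre-compose with the isomorphism on the other side (working in $\mathrm{End}_{\mathcal C}(M_i)$ rather than $\mathrm{End}_{\mathcal C}(N_1)$), which changes nothing.
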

	
	\begin{proof}
		We treat the case of split surjections, the case of split injections being similar. If $f$ is a split surjection, it is clear that $\Phi(f)$ is a split surjection. Conversely, assume that $\Phi(f)$ is a split surjection. We proceed by induction on the number of isomorphism classes of indecomposable summands of $N$. The result is clear if $N=0$.
		
		Assume that $N \neq 0$ and fix a decomposition $N=N_1\oplus N'$ with $N_1$ indecomposable. Denote by $p : N \rightarrow N_1$ the projection associated with this decomposition. We also fix a decomposition $M = \oplus_{j=1}^m M_j$ in indecomposable objects, and we denote by $i_j : M_j \rightarrow M, p_j : M \rightarrow M_j$ the inclusion and projection morphisms associated with this decomposition.
		
		The composition $\Phi(M) \xrightarrow{\mathrm{id}} \Phi(M) \xrightarrow{\Phi(pf)} \Phi(N_1)$ is a split surjection and $\Phi(N_1), \left(\Phi(M_j)\right)_j$ are indecomposable by assumption (1). Thus by Lemma \ref{ks}, there exists $j$ such that $\Phi(pfi_j)$ is an isomorphism. Without loss of generality, we can assume that $j=1$. Then $\Phi(N_1)$ and $\Phi(M_1)$ are isomorphic, so by assumption (2) $N_1$ and $M_1$ are isomorphic. Fix an isomorphism $g : N_1 \xrightarrow{\sim} M_1$. The morphism $\mathrm{End}_{\mathcal C}(N_1) \rightarrow \mathrm{End}_{\mathcal D}(\Phi(N_1))$ induced by $\Phi$ is local and $\Phi(pfi_1g)$ is invertible. Thus $pfi_1g$ is invertible, and $pfi_1$ is invertible as well.
		
		Hence, if we let $M' = \oplus_{j >1} M_j$, in the decompositions $M = M_1 \oplus M'$ and $N = N_1 \oplus N'$ we can write $f$ as a matrix
		\[
			f = \left(\begin{array}{cc}
			a & b \\
			c & d
			\end{array}\right)
		\]
		with $a = pfi_1$. Since $a$ is an isomorphism, there exist automorphisms $u$ of $N$ and $v$ of $M$ such that
		\[
			ufv = \left(\begin{array}{cc}
			a & 0 \\
			0 & f'
			\end{array}\right)
		\]
		for some morphism $f' : M' \rightarrow N'$. Since $\Phi(f)$ is a split surjection and $u,v$ are invertible, $\Phi(ufv)$ is a split surjection as well. Thus, $\Phi(f')$ is a split surjection. The number of indecomposable summands of $N'$ is one less than that of $N$. By induction, we deduce that $f'$ is a split surjection. It follows that $ufv$ is a split surjection. So $f$ is a split surjection.
	\end{proof}

	\begin{cor}\label{liftssi}
		Let $\mathcal C, (\mathcal D_i)_{i \in I}$ be Krull-Schmidt categories, and let $\left(\Phi_i : \mathcal C \rightarrow \mathcal D_i\right)_{i \in I}$ be functors satisfying the same assumptions as in Theorem \ref{liftnh}. Let $f : M \rightarrow N$ be a morphism in $\mathcal C$. The following are equivalent:
		\begin{enumerate}
			\item $f$ is a split surjection (resp. injection).
			\item there exists $i \in I$ adapted to $f$ such that $\Phi_i(f)$ is a split surjection (resp. injection).
			\item for all $i \in I$, $\Phi_i(f)$ is a split surjection (resp. injection).
		\end{enumerate}
	\end{cor}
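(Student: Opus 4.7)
The implications $(1) \Rightarrow (3)$ and $(3) \Rightarrow (2)$ are the easy directions. First, any functor preserves split surjections and split injections, giving $(1) \Rightarrow (3)$. For $(3) \Rightarrow (2)$, I need an $i \in I$ adapted to $f$ to exist at all; this follows from the hypothesis of Theorem~\ref{liftnhgen} applied to the finite collection of pairwise non-isomorphic indecomposable summands of $M \oplus N$ (which is finite because $\mathcal C$ is Krull-Schmidt).

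The substantive direction is $(2) \Rightarrow (1)$, and the plan is to reduce it to Proposition~\ref{liftss}. The obstacle is that Proposition~\ref{liftss} requires its two assumptions to hold for \emph{all} indecomposable objects of the source category, whereas adaptedness of $i$ only gives these properties for the finitely many indecomposable summands of $M \oplus N$. To bridge this gap, I would introduce the full subcategory $\mathcal C' \subseteq \mathcal C$ whose objects are those isomorphic to finite direct sums of indecomposable summands of $M \oplus N$. This $\mathcal C'$ is closed under finite direct sums and direct summands, hence is itself Krull-Schmidt, and by construction every indecomposable object of $\mathcal C'$ is isomorphic to a summand of $M \oplus N$. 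Let $\mathcal D'_i$ be the analogous full subcategory of $\mathcal D_i$ generated by the $\Phi_i$-images of indecomposable summands of $M \oplus N$. By definition of $i$ being adapted to $f$, the restriction $\Phi_i \colon \mathcal C' \to \mathcal D'_i$ satisfies exactly the two hypotheses of Proposition~\ref{liftss}.

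Finally, I would observe that the morphism $f \colon M \to N$ lives in $\mathcal C'$, and being a split surjection or split injection is detected by the existence of a one-sided inverse in the ambient category, which will automatically lie in $\mathcal C'$ if $f$ does. Thus Proposition~\ref{liftss} applied to $\Phi_i \colon \mathcal C' \to \mathcal D'_i$ upgrades the fact that $\Phi_i(f)$ is a split surjection (resp.\ injection) to the statement that $f$ itself is, concluding $(2) \Rightarrow (1)$. No new computations are needed; the entire content is the bookkeeping that restricting to $\mathcal C'$ legitimately converts a pointwise adaptedness hypothesis into the uniform hypothesis required by Proposition~\ref{liftss}.
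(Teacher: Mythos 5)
Your proposal is correct and matches the paper's argument: the paper likewise passes to the smallest full subcategory of $\mathcal C$ containing $M \oplus N$ and closed under direct sums and summands, notes that an adapted $i$ makes the restricted functor satisfy the hypotheses of Proposition~\ref{liftss}, and concludes from that proposition. Your extra remarks (existence of an adapted $i$, fullness making splitness insensitive to the subcategory, the auxiliary $\mathcal D'_i$) are harmless elaborations of the same reduction.
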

	
	\begin{proof}
		Denote by $\mathcal C_f$ the smallest full subcategory of $\mathcal C$ containing $M \oplus N$ and closed under direct sums and direct summands. This is a Krull-Schmidt category with finitely many isomorphism classes of indecomposable objects. By assumption, we can fix $i \in I$ adapted to $f$. Then the functor $\mathcal C_f \rightarrow \mathcal{D}_i$ induced by $\Phi_i$ satisfies the assumptions of Proposition \ref{liftss}. The result then follows from Propostion \ref{liftss}.
	\end{proof}

	We can now prove Theorem \ref{liftnhgen} for complexes that are bounded above or below.

	\begin{prop}\label{bounded}
		Let $\mathcal C, (\mathcal D_i)_{i \in I}$ be Krull-Schmidt categories, and let $\left(\Phi_i : \mathcal C \rightarrow \mathcal D_i\right)_{i\in I}$ be functors satisfying the same assumptions as in Theorem \ref{liftnh}. Let $C \in \mathrm{Comp}(\mathcal C)$ be bounded above or below. Then C is null-homotopic if and only if for all $i \in I$, $\Phi_i(C)$ is null-homotopic.
	\end{prop}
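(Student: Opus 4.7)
The forward direction is immediate, as every $K$-linear functor preserves null-homotopies. For the converse, I treat the bounded-above case; the bounded-below case is handled dually. Fix $N$ with $M^r=0$ for $r>N$. The plan is to construct, by downward induction on $r$ starting at $r=N$, morphisms $s^r : M^r \rightarrow M^{r-1}$ (with the convention $s^r:=0$ for $r>N$) satisfying $d^{r-1} s^r + s^{r+1} d^r = \mathrm{id}_{M^r}$ at every position.

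\textbf{Initial step.} Any null-homotopy of $\Phi_i(C)$ provides a section of $\Phi_i(d^{N-1})$, so by picking $i \in I$ adapted to $d^{N-1}$ and applying Corollary \ref{liftssi}, $d^{N-1}$ itself is a split surjection in $\mathcal C$. Fix a section $s^N$. The idempotent $s^N d^{N-1}$ splits $M^{N-1}$ as $K^{N-1} \oplus M^N$, producing a decomposition $C \simeq D^{(1)} \oplus C^{(1)}$ with $D^{(1)} = (M^N \xrightarrow{\mathrm{id}} M^N)$ in degrees $N-1, N$ and $C^{(1)}$ bounded above with top term $K^{N-1}$ in degree $N-1$. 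Since $\Phi_i(D^{(1)})$ is null-homotopic, so is the direct summand $\Phi_i(C^{(1)})$ of the null-homotopic $\Phi_i(C)$.

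\textbf{Inductive step.} At stage $k \geqslant 1$, assume given $s^N, \ldots, s^{N-k+1}$ satisfying the homotopy identity at positions $N, \ldots, N-k+1$ together with a decomposition $C \simeq \bigoplus_{j=1}^{k} D^{(j)} \oplus C^{(k)}$ whose last summand has top term a direct summand $K^{N-k} \subseteq M^{N-k}$ in degree $N-k$, complementary to the image of the idempotent $e_k := s^{N-k+1} d^{N-k}$ (this is idempotent thanks to the homotopy identity at position $N-k+1$). Applying the initial step to $C^{(k)}$, the top differential $d^{(k)} : M^{N-k-1} \rightarrow K^{N-k}$ is a split surjection in $\mathcal C$; fix a section $t$ and set $s^{N-k} := t \pi_k$, with $\pi_k : M^{N-k} \twoheadrightarrow K^{N-k}$ the projection. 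The homotopy identity at position $N-k$ amounts to $d^{N-k-1} t = \iota_k$ (the inclusion $K^{N-k} \hookrightarrow M^{N-k}$): the $K^{N-k}$-component equals $d^{(k)} t = \mathrm{id}_{K^{N-k}}$ by construction, while the other component is $e_k d^{N-k-1} t = s^{N-k+1} d^{N-k} d^{N-k-1} t = 0$ by $d \circ d = 0$.

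Each $s^r$ is determined after finitely many stages and never revisited, so the family $(s^r)_{r \in \mathbb{Z}}$ is a well-defined null-homotopy of $C$. The delicate point is precisely the computation above: the lifted section $t$ might a priori carry a component along the disk summand already peeled off, and the relation $d \circ d = 0$ is exactly what forces this component to vanish, ensuring compatibility of the newly constructed section with the previously built ones. The bounded-below case is handled dually, peeling disks off from the bottom by lifting split injections via Corollary \ref{liftssi}.
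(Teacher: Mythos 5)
Your proof is correct and follows essentially the same route as the paper: a downward induction constructing the homotopy maps one degree at a time, splitting off the idempotent $s^{N-k+1}d^{N-k}$, noting that the induced differential onto the complementary summand becomes a split surjection in each $\mathcal D_i$ (since $\Phi_i(C)$ is null-homotopic), and lifting it to a split surjection in $\mathcal C$ via Corollary \ref{liftssi}. Your explicit bookkeeping of the peeled-off contractible summands $D^{(j)}$ is only a presentational variant of the paper's argument, not a different method.
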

	
	\begin{proof}
		We treat the case where $C$ is bounded above, the bounded below case being similar. If $C$ is null-homotopic, it is clear that for all $i \in I$, $\Phi_i(C)$ is null-homotopic. Conversely, assume that for all $i \in I$, $\Phi_i(C)$ is null-homotopic. Since $C$ is bounded above, it has the form
		\[
		\cdots \rightarrow C^j \xrightarrow{d^j} C^{j+1} \rightarrow \cdots \rightarrow C^{\ell-1} \xrightarrow{d^{\ell-1}} C^{\ell} \rightarrow 0,
		\]
		with $\ell$ such that $C^{j}=0$ for all $j>\ell$. We inductively construct maps $h_j \in \mathrm{Hom}_{\mathcal C}(C^j,C^{j-1})$ for $j \leqslant \ell$, such that  $h_{j+1}d^j+d^{j-1}h_j=\mathrm{id}_{C^j}$.
		
		By assumption, $\Phi_i(d^{\ell-1})$ is a split surjection for all $i\in I$. Hence $d^{\ell -1}$ is a split surjection by Corollary \ref{liftssi}. We define $h_{\ell}$ to be a right inverse to $d^{\ell-1}$. Assume that the $h_i$ are constructed for $i\geqslant j$. From $h_{j+1}d^j+d^{j-1}h_j=\mathrm{id}_{C^j}$ and $d^jd^{j-1}=0$, we get that $e= h_jd^{j-1}$ is an idempotent of $C^{j-1}$. Hence there is a decomposition $C^{j-1}=\mathrm{im}(e)\oplus\mathrm{im}(1-e)$. We have $ed^{j-2} =0$ and $d^{j-1}(1-e) = 0$. Thus $C$ has the form:
		\[
		\cdots \rightarrow C^{j-2} \xrightarrow{d^{j-2} = \left(\begin{smallmatrix} 0 \\ \delta \end{smallmatrix}\right)} \mathrm{im}(e) \oplus \mathrm{im}(1-e) \xrightarrow{d^{j-1} = \left( \begin{smallmatrix} \star & 0 \end{smallmatrix}\right)} C^j \rightarrow \cdots
		\]
		for some morphism $\delta : C^{j-2} \rightarrow \mathrm{im}(1-e)$. For all $i\in I$, $\Phi_i(\delta)$ is a split surjection since $\Phi_i(C)$ is null-homotopic. Thus, $\delta$ is also a split surjection by Corollary \ref{liftssi}. Let $\delta'$ be a right inverse to $\delta$, and define $h_{j-1} = \left( \begin{smallmatrix} 0 & \delta' \end{smallmatrix}\right) : \mathrm{im}(e) \oplus \mathrm{im}(1-e) \rightarrow C^{j-2}$. By construction, we have $h_jd^{j-1}+d^{j-2}h_{j-1}=\mathrm{id}_{C^{j-1}}$, which completes the induction.
	\end{proof}

	To finish the proof of Theorem \ref{liftnhgen}, we show how to extend this result to unbounded complexes. We will need the following standard Lemma.

	\begin{lem}[Gaussian elimination]\label{gauss}
		\begin{enumerate}
			\item Let $\mathcal C$ be an additive category, and let $C \in \mathrm{Comp}(\mathcal C)$. Assume that $C$ has the form:
			\[
			C = \cdots \rightarrow C^{i-1} \rightarrow X \oplus Y \xrightarrow{\left(\begin{smallmatrix} a & b \\ c & d \end{smallmatrix}\right)} Z \oplus W \rightarrow C^{i+2} \rightarrow \cdots
			\]
			with $a$ an isomorphism. Then $C$ is homotopy equivalent to a complex of the form:
			\[
			\cdots \rightarrow C^{i-1} \rightarrow Y \rightarrow W \rightarrow C^{i+2} \rightarrow \cdots
			\]
			\item Let $\mathcal C$ be an additive, idempotent complete category, and let $C \in \mathrm{Comp}(\mathcal C)$. Assume that $C$ has the form:
			\[
			C = \cdots \rightarrow C^{i-1} \rightarrow C^i \xrightarrow{\left(\begin{smallmatrix} a  \\ c  \end{smallmatrix}\right)} Z \oplus W \rightarrow C^{i+2} \rightarrow \cdots
			\]
			with $a$ a split surjection. Then $C$ is homotopy equivalent to a complex of the form
			\[
			\cdots \rightarrow C^{i-1} \rightarrow Y \rightarrow W \rightarrow C^{i+2} \rightarrow \cdots
			\]
			where $Y$ is an object of $\mathcal C$ such that $C^i \simeq Y \oplus Z$.
			\item Let $\mathcal C$ be an additive, idempotent complete category, and let $C \in \mathrm{Comp}(\mathcal C)$. Assume that $C$ has the form:
			\[
			C = \cdots \rightarrow C^{i-1} \rightarrow X \oplus Y \xrightarrow{\left(\begin{smallmatrix} a  & b  \end{smallmatrix}\right)} C^{i+1} \rightarrow C^{i+2} \rightarrow \cdots
			\]
			with $a$ a split injection. Then $C$ is homotopy equivalent to a complex of the form
			\[
			\cdots \rightarrow C^{i-1} \rightarrow Y \rightarrow W \rightarrow C^{i+2} \rightarrow \cdots
			\]
			where $W$ is an object of $\mathcal C$ such that $C^{i+1} \simeq X \oplus W$.
		\end{enumerate}
	\end{lem}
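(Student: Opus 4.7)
The plan is to prove part (1) by an explicit change of basis that splits off a contractible direct summand of the form $X \xrightarrow{a} Z$, and then to deduce parts (2) and (3) by using splittings to reduce them to part (1).

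For part (1), write the incoming differential to position $i$ as $\begin{pmatrix} \alpha \\ \beta \end{pmatrix} : C^{i-1} \to X \oplus Y$ and the outgoing differential from position $i+1$ as $\begin{pmatrix} \gamma & \delta \end{pmatrix} : Z \oplus W \to C^{i+2}$. The conditions $d^2 = 0$ give the four identities $a\alpha + b\beta = 0$, $c\alpha + d\beta = 0$, $\gamma a + \delta c = 0$, $\gamma b + \delta d = 0$. Define automorphisms
\[
u = \begin{pmatrix} 1_X & -a^{-1}b \\ 0 & 1_Y \end{pmatrix} : X\oplus Y \xrightarrow{\sim} X\oplus Y,\qquad v = \begin{pmatrix} 1_Z & 0 \\ -ca^{-1} & 1_W \end{pmatrix} : Z\oplus W \xrightarrow{\sim} Z\oplus W.
\]
A direct matrix computation shows $v \begin{pmatrix} a & b \\ c & d \end{pmatrix} u = \begin{pmatrix} a & 0 \\ 0 & d - ca^{-1}b \end{pmatrix}$. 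Using $a\alpha = -b\beta$ one obtains $u^{-1}\begin{pmatrix} \alpha \\ \beta \end{pmatrix} = \begin{pmatrix} 0 \\ \beta \end{pmatrix}$, and using $\gamma a = -\delta c$ one obtains $\begin{pmatrix} \gamma & \delta \end{pmatrix} v^{-1} = \begin{pmatrix} 0 & \delta \end{pmatrix}$. Thus the change of basis induced by $u$ and $v$ (and by the identity in all other degrees) is an isomorphism of complexes from $C$ to the direct sum of the contractible complex $\cdots \to 0 \to X \xrightarrow{a} Z \to 0 \to \cdots$ and the complex $\cdots \to C^{i-1} \xrightarrow{\beta} Y \xrightarrow{d - ca^{-1}b} W \xrightarrow{\delta} C^{i+2} \to \cdots$. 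Since the first summand is null-homotopic, this proves (1).

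For part (2), in an idempotent complete additive category any split surjection has a kernel, so we may choose a direct sum decomposition $C^i \simeq Z \oplus Y$ with $Y = \ker(a)$, under which $a$ corresponds to the projection $(1_Z, 0)$ and the full differential $\begin{pmatrix} a \\ c \end{pmatrix}$ takes the form $\begin{pmatrix} 1_Z & 0 \\ c_1 & c_2 \end{pmatrix} : Z \oplus Y \to Z \oplus W$. This is exactly the shape of part (1) with the isomorphism $1_Z$ in the upper-left corner, so applying (1) to this rewritten complex yields a homotopy equivalence to $\cdots \to C^{i-1} \to Y \to W \to C^{i+2} \to \cdots$ with $C^i \simeq Y \oplus Z$ as required. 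Part (3) is proved by the dual argument: a split injection in an idempotent complete additive category has a cokernel, giving $C^{i+1} \simeq X \oplus W$ with $a$ corresponding to the inclusion $(1_X, 0)^T$, and part (1) then applies.

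There is no real obstacle here beyond bookkeeping; the content is that the explicit change of basis $u, v$ works and that the compatibility with the adjacent differentials is automatic from $d^2 = 0$. The only subtlety is verifying that in parts (2) and (3) the existence of splittings (and hence of the decompositions $C^i \simeq Y\oplus Z$ and $C^{i+1} \simeq X \oplus W$) genuinely requires idempotent completeness, which is why that hypothesis is included in those cases but not in (1).
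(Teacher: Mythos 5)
Your proof is correct and follows essentially the same route as the paper: in (1) you diagonalize the middle differential by elementary automorphisms (the paper merely asserts the existence of such $p,q$, you write them out), use $d^2=0$ to see the adjacent differentials have no component through $X$ and $Z$, and split off the contractible summand $X \xrightarrow{a} Z$; in (2) and (3) you use idempotent completeness to split off the kernel/cokernel (the paper splits the idempotent $a'a$, which is the same thing) and reduce to (1). No gaps.
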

	
	\begin{proof}
		\begin{enumerate}
			\item Since $a$ is an isomorphism, there exists an automorphism $p$ of $X \oplus Y$, an automorphim $q$ of $Y \oplus Z$ and a morphism $d' : Y \rightarrow W$ such that
			\[
			q\left(\begin{smallmatrix} a & b \\ c & d \end{smallmatrix}\right)p = \left(\begin{smallmatrix} a & 0 \\ 0 & d' \end{smallmatrix}\right).
			\]
			Hence $C$ is isomorphic to
			\[
			\cdots \rightarrow C^{i-1} \rightarrow X \oplus Y \xrightarrow{\left(\begin{smallmatrix} a & 0 \\ 0 & d' \end{smallmatrix}\right)} Z \oplus W \rightarrow C^{i+2} \rightarrow \cdots
			\]
			Since $a$ is a isomorphism, the compositions $C^{i-1} \rightarrow X\oplus Y \twoheadrightarrow X$ and $X \hookrightarrow X\oplus  Y \rightarrow C^{i+1}$ are zero. Hence $C$ is isomorphic to a direct sum
			\[
			\left(\cdots \rightarrow C^{i-1} \rightarrow Y \xrightarrow{d'} W \rightarrow C^{i+2} \rightarrow \cdots \right) \bigoplus \left( 0 \rightarrow X \xrightarrow{a} Z \rightarrow 0\right).
			\]
			The complex $0 \rightarrow X \xrightarrow{a} Z \rightarrow 0$ is homotopy equivalent to zero since $a$ is an isomorphism, so the result follows.
			\item Fix a right inverse $a'$ of $a$. Then $a'a$ is an idempotent of $\mathrm{End}_{\mathcal C}(C^i)$. Put $X = \mathrm{im}(a'a)$ and $Y = \mathrm{im}(1-a'a)$. We have a decomposition $C^i = X \oplus Y$ and $a$ induces an isomorphism $X \xrightarrow{\sim} Z$. The result then follows from (1).
			\item The proof is similar to (2).
		\end{enumerate}
	\end{proof}

	\begin{proof}[Proof of Theorem \ref*{liftnhgen}]
		For $C \in \mathrm{Comp}(\mathcal C)$, we denote by $n(C)$ the number of summands in a decomposition of $C^0$ into indecomposable components. Assume that for all $i \in I$, $\Phi_i(C)$ is null-homotopic. We start by proving that $d_C^0=0$ or $C$ is homotopy equivalent to a complex $C'$ with $n(C') = n(C) -1$.
	
		Fix $i \in I$ adapted to $d_C^{-1}$ and $d_C^0$. If $\Phi_i(d_C^0) = 0$, then $\Phi_i(d_C^{-1})$ is a split surjection since $\Phi_i(C)$ is null-homotopic. By Corollary \ref{liftssi}, we deduce that $d_C^{-1}$ is a split surjection. Thus, $d_C^0 = 0$. Assume now that $\Phi_i(d_C^{0}) \neq 0$. Fix a decomposition into indecomposable summands $C^0 = \oplus_{j} M_j$. Since $\Phi_i(C)$ is null-homotopic, there exists a split injection $u : M \rightarrow \Phi_i(C^0)$ with $M \in \mathcal D_i$ indecomposable such that $\Phi_i(d_C^0)u$ is a split injection. By Lemma \ref{ks}, there exists $j$ such that $\Phi_i(d^0_Ci_j)$ is a split injection, where $i_j : M_j \rightarrow C^0$ is the inclusion associated with the decomposition of $C^0$. By Corollary \ref{liftssi}, we deduce that $d_C^0u_j$ is a split injection. Thus, by Lemma \ref{gauss}, $C$ is homotopy equivalent to a complex $C'$ with $n(C') = n(C) -1$. 
	
		Thus, if $\Phi_i(C)$ is null-homotopic for all $i \in I$, we have proved that the $d_C^0=0$, or $C$ is homotopy equivalent to a complex $C'$ with $n(C')=n(C)-1$. By induction, it follows that $C$ is homotopy equivalent to a complex $C'$ such that $d_{C'}^0=0$. Hence, $C$ is homotopy equivalent to a direct sum $C_1 \oplus C_2$, with $C_1$ bounded above and $C_2$ bounded below. For all $i \in I$, $\Phi_i(C_1)$ and $\Phi_i(C_2)$ are null-homotopic since $\Phi_i(C)$ is as well. So by Proposition \ref{bounded}, $C_1$ and $C_2$ are null-homotopic. Hence, $C$ is null-homotopic. Conversely, if $C$ is null-homotopic, it is clear that for all $i \in I$, $\Phi_i(C)$ is null-homotopic.
	\end{proof}

	\begin{cor}\label{lifthegen}
		Let $\mathcal C, (\mathcal D_i)_{i \in I}$ be Krull-Schmidt categories, and let $\left(\Phi_i : \mathcal C \rightarrow \mathcal D_i\right)_{i \in I}$ be functors satisfying the same assumptions as in Theorem \ref{liftnhgen}. Let $f : C \rightarrow D$ be a morphism between complexes of objects of $\mathcal C$. Then $f$ is a homotopy equivalence if and only if for all $i \in I$, $\Phi_i(f)$ is a homotopy equivalence.
	\end{cor}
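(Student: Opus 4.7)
The plan is to reduce Corollary \ref{lifthegen} to Theorem \ref{liftnhgen} by passing to the mapping cone. Recall the standard fact from homological algebra that a morphism of complexes $f : C \to D$ in an additive category is a homotopy equivalence if and only if its cone $\mathrm{Cone}(f)$ is null-homotopic. I would begin by noting this equivalence (or briefly recalling its proof: one direction is immediate, and the other is obtained from a contracting homotopy on $\mathrm{Cone}(f)$ by reading off its matrix components to produce a homotopy inverse of $f$ and the required homotopies).

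Next, since each $\Phi_i$ is a $K$-linear (hence additive) functor, the formation of mapping cones commutes with $\Phi_i$: there is a canonical isomorphism of complexes $\Phi_i(\mathrm{Cone}(f)) \simeq \mathrm{Cone}(\Phi_i(f))$. Consequently, $\Phi_i(f)$ is a homotopy equivalence if and only if $\Phi_i(\mathrm{Cone}(f)) \simeq \mathrm{Cone}(\Phi_i(f))$ is null-homotopic.

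With these two observations in hand, the proof reduces to an application of Theorem \ref{liftnhgen} to the complex $C' = \mathrm{Cone}(f) \in \mathrm{Comp}(\mathcal{C})$: the theorem gives that $C'$ is null-homotopic if and only if $\Phi_i(C')$ is null-homotopic for all $i \in I$, which by the previous step translates exactly into the statement that $f$ is a homotopy equivalence if and only if $\Phi_i(f)$ is a homotopy equivalence for all $i \in I$. There is no real obstacle here; the argument is a formal consequence of Theorem \ref{liftnhgen} combined with the cone characterization of homotopy equivalences, and the only thing to verify carefully is the additivity of $\Phi_i$ so that $\Phi_i(\mathrm{Cone}(f))$ can be identified with $\mathrm{Cone}(\Phi_i(f))$, which is built into our hypotheses.
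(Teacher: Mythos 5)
Your proposal is correct and follows exactly the paper's argument: the paper also proves the corollary by applying Theorem \ref{liftnhgen} to $\mathrm{Cone}(f)$, using the standard facts that $f$ is a homotopy equivalence iff its cone is null-homotopic and that the additive functors $\Phi_i$ commute with cone formation. Your write-up simply makes these routine verifications explicit.
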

	
	\begin{proof}
		It suffices to apply Theorem \ref{liftnhgen} to the complex $\mathrm{Cone}(f)$.
	\end{proof}

	\subsubsection{Application to simple 2-representations} To prove our faithfulness result, we show that the collection of simple 2-representations of $\U$ satisfies the assumptions of Theorem \ref{liftnhgen}. The first step is to study the image of the indecomposable 1-morphisms of $\U$ in the simple 2-representations. This will be based on the following observation.

	\begin{lem}\label{polsym}
		Let $k,\ell,r \in \mathbb{N}$ and put $n=k+\ell+r$. Then the multiplication map
		\[
			P_n^{\s_{k+\ell}\times\s_r}\otimes P_n^{\s_k\times\s_{\ell+r}} \rightarrow P_n^{\s_k\times\s_{\ell}\times\s_r}
		\]
		is surjective.
	\end{lem}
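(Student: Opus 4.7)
The plan is to exhibit a $K$-algebra generating set of $P_n^{\s_k\times\s_\ell\times\s_r}$ all of whose elements lie in the image of the multiplication map. This suffices because the image is a $K$-subalgebra of the target: the target is commutative, both factors $P_n^{\s_{k+\ell}\times\s_r}$ and $P_n^{\s_k\times\s_{\ell+r}}$ contain $1$, and one checks directly that a product of two elements of the form $\sum_i a_i b_i$ (with $a_i\in P_n^{\s_{k+\ell}\times\s_r}$ and $b_i\in P_n^{\s_k\times\s_{\ell+r}}$) is again of this form.

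A natural generating set for $P_n^{\s_k\times\s_\ell\times\s_r}$ is obtained by taking the elementary symmetric polynomials in each of the three blocks of variables, namely $e_j(x_1,\ldots,x_k)$ for $1\leqslant j\leqslant k$, $e_j(x_{k+1},\ldots,x_{k+\ell})$ for $1\leqslant j\leqslant \ell$, and $e_j(x_{k+\ell+1},\ldots,x_n)$ for $1\leqslant j\leqslant r$. The first family already lies in $P_n^{\s_k\times\s_{\ell+r}}$ and the third in $P_n^{\s_{k+\ell}\times\s_r}$, so both are trivially in the image. The only real task is to handle the middle family.

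For that I would use a generating-function trick. In $P_n[[t]]$, the factor $\prod_{i=1}^{k}(1-tx_i)$ is invertible (its constant term is $1$), and expanding the inverse gives $\sum_{j\geqslant 0}h_j(x_1,\ldots,x_k)t^j$, where $h_j$ is the complete homogeneous symmetric polynomial. Multiplying the identity $\prod_{i=1}^{k+\ell}(1-tx_i)=\prod_{i=1}^{k}(1-tx_i)\cdot\prod_{i=k+1}^{k+\ell}(1-tx_i)$ by this inverse yields
\[
\prod_{i=k+1}^{k+\ell}(1-tx_i)=\Big(\sum_{j=0}^{k+\ell}(-1)^j e_j(x_1,\ldots,x_{k+\ell})\,t^j\Big)\cdot\Big(\sum_{j\geqslant 0} h_j(x_1,\ldots,x_k)\,t^j\Big).
\]
Extracting the coefficient of $t^m$ for $1\leqslant m\leqslant\ell$ expresses $e_m(x_{k+1},\ldots,x_{k+\ell})$ as a finite $\mathbb Z$-linear combination of products $e_{m-j}(x_1,\ldots,x_{k+\ell})\cdot h_j(x_1,\ldots,x_k)$, in which the first factor lies in $P_n^{\s_{k+\ell}\times\s_r}$ and the second in $P_n^{\s_k\times\s_{\ell+r}}$ (since $h_j(x_1,\ldots,x_k)$ is symmetric in $x_1,\ldots,x_k$ and does not involve the remaining variables). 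Thus $e_m(x_{k+1},\ldots,x_{k+\ell})$ belongs to the image, which together with the two easy families produces the desired generating set and concludes the argument. I do not expect any real obstacle: the only subtle point is the passage through formal power series, which is harmless since only finitely many coefficients of the inverse series are needed.
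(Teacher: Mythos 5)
Your argument is correct and is essentially the paper's proof: both reduce to showing that the elementary symmetric polynomials $\epsilon_m(x_{k+1},\ldots,x_{k+\ell})$ in the middle block of variables lie in the image, which (as you note, and the paper leaves implicit) is a subalgebra containing both factor algebras, so the outer two families are free. The only difference is in how that middle family is handled: the paper argues by induction on $m$ via the expansion $\epsilon_{m}(x_1,\ldots,x_{k+\ell})=\sum_{i}\epsilon_i(x_1,\ldots,x_k)\,\epsilon_{m-i}(x_{k+1},\ldots,x_{k+\ell})$, while you unroll the same recursion in closed form using the complete homogeneous symmetric polynomials $h_j(x_1,\ldots,x_k)$ --- a cosmetic variation.
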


	\begin{proof}
		Let $I$ be the image of the multiplication map. We denote by $\epsilon_j$ the elementary symmetric polynomial of degree $j$. To prove the result, it suffices to show that $\epsilon_j(x_{k+1},\ldots,x_{k+\ell}) \in I$ for all $j \geqslant 1$. For $j=1$, we have
		\[
			\epsilon_1(x_{k+1},\ldots,x_{k+\ell}) = \epsilon_1(x_{1},\ldots,x_{k+\ell}) - \epsilon_1(x_{1},\ldots,x_{k}) \in I.
		\]
		In general, we have:
		\[
			\epsilon_{j+1}(x_{k+1}\ldots,x_{k+\ell}) = \epsilon_{j+1}(x_1\ldots,x_{k+\ell}) - \sum_{i=1}^{j+1} \epsilon_i(x_1,\ldots,x_k)\epsilon_{j+1-i}(x_{k+1},\ldots,x_{k+\ell}),
		\]
		and the result follows by induction on $j$.
	\end{proof}

	\begin{prop}\label{indec}
		Let $X$ be an indecomposable object of $\mathcal U1_{\lambda}$ for some $\lambda \in \mathbb{Z}$. If $n$ is large enough and of the same parity as $\lambda$, then $\left| \Phi_n(X) \right| \in 1 + q\mathbb{N}[q]$.
	\end{prop}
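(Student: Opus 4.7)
The plan is to apply Proposition \ref{adj} in $\cL(n)$ to reduce $|\Phi_n(X)|$ to a sum of graded dimensions $\mathrm{grdim}\bigl(\mathrm{End}^\bullet(\Phi_n(F^{(c)}1_\mu))\bigr)$, and then to compute each of these explicitly using the description of $\Phi_n$ on divided powers together with Lemma \ref{polsym}.

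By Theorem \ref{indecofU}, and since $|q^sY|=|Y|$, one may reduce to $X = F^{(b)}E^{(a)}1_\lambda$ with $\lambda > b-a$; the case $X = E^{(a)}F^{(b)}1_\lambda$ with $\lambda \leq b-a$ is entirely symmetric via the second formula of Proposition \ref{adj}. Take $n$ of the same parity as $\lambda$ and large enough that every weight $\lambda + 2(2a-i)$ with $0 \leq i \leq a$ lies in $[-n,n]$ and the associated index $k_i = (n + \lambda + 2(2a-i))/2$ satisfies $k_i \geq a+b-i$. Applying Proposition \ref{adj} with $c=a$ and $d=b$ yields
\[
|\Phi_n(X)| = \sum_{i=0}^{a} q^{(2a-i)(\lambda+2a-i)} \qbin{\lambda+2a}{i}\qbin{a+b-i}{b}^{2} \cdot \mathrm{grdim}\bigl(\mathrm{End}^\bullet(\Phi_n(F^{(a+b-i)}1_{\lambda+2(2a-i)}))\bigr).
\]

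The main computation is the identification of each inner graded dimension. Using the explicit form of $\Phi_n(F^{(c)}1_{-n+2k'})$ given after Theorem \ref{univ} as (up to shift) the bimodule $e_{[k'-c+1, k']} H_{k', n}$, the Morita equivalences (\ref{morita}) convert it to a $(P_n^{\s_{k'-c} \times \s_{n-k'+c}}, P_n^{\s_{k'} \times \s_{n-k'}})$-bimodule, which should be identified with $P_n^{\s_{k'-c} \times \s_c \times \s_{n-k'}}$ acted on by multiplication. Lemma \ref{polsym} provides precisely the surjectivity of the multiplication map from the tensor product of the two invariant rings onto this bimodule, showing it is cyclic. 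As both acting algebras are commutative, the endomorphism ring of a cyclic bimodule is the bimodule itself, so
\[
\mathrm{grdim}\bigl(\mathrm{End}^\bullet(\Phi_n(F^{(c)}1_{-n+2k'}))\bigr) = \mathrm{grdim}\bigl(P_n^{\s_{k'-c} \times \s_c \times \s_{n-k'}}\bigr),
\]
which lies in $1 + q^2\mathbb{N}[[q^2]]$, in particular has constant term $1$ and all coefficients non-negative.

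To conclude, one analyzes the lowest $q$-power contributed by each summand. Combining $q^{(2a-i)(\lambda+2a-i)}$ with the lowest powers $q^{-i(\lambda+2a-i)}$ and $q^{-2b(a-i)}$ of the quantum binomial factors and the constant term of the endomorphism ring yields lowest exponent $2(a-i)(\lambda+2a-i-b)$. Since $\lambda > b-a$, we have $\lambda+2a-i-b \geq \lambda+a-b > 0$, so this exponent is strictly positive for $i < a$ and equals $0$ precisely for $i=a$ with leading coefficient $1\cdot 1\cdot 1 = 1$. Since each summand has non-negative coefficients, $|\Phi_n(X)| \in 1 + q\mathbb{N}[q]$. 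The principal obstacle is the bimodule identification in the middle step: one must track the idempotents $e_{[k'-c+1,k']}$ through the Morita equivalences to confirm that $\Phi_n(F^{(c)}1_\mu)$ indeed corresponds to the cyclic bimodule claimed, with Lemma \ref{polsym} supplying the essential surjectivity that secures cyclicity and hence the clean formula for the endomorphism ring.
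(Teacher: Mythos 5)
Your proposal is correct and follows essentially the same route as the paper: reduce via Theorem \ref{indecofU} and Proposition \ref{adj} to the graded endomorphism rings of $\Phi_n(F^{(c)}1_\mu)$, identify these through the Morita equivalences and Lemma \ref{polsym} as invariant polynomial rings (the paper does this base case for $E^{(a)}1_\lambda$ and notes $F^{(a)}1_\lambda$ is similar), and then run the same lowest-degree analysis on the prefactors $q^{(2a-i)(\lambda+2a-i)}\qbin{\lambda+2a}{i}\qbin{a+b-i}{b}^2$, whose minimal exponent $2(a-i)(\lambda+2a-b-i)$ vanishes exactly at $i=a$. The bimodule/idempotent bookkeeping you flag as the main obstacle is exactly what the paper carries out explicitly, and your cyclicity argument for the endomorphism ring is the same use of Lemma \ref{polsym}.
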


	\begin{proof}
		We start by proving the result for $X=E^{(a)}1_{\lambda}$ for some $a\in \mathbb{N}$. Let $n>\vert \lambda \vert + 2a$ be an integer of the same parity as $\lambda$, and let $k=\frac{\lambda+n}{2}$. Then $\Phi_n\left(E^{(a)}1_{\lambda}\right) = q^{-\frac{a(a-1)}{2}} H_{k+a,n}e'_{[k+1,k+a]}$ as $(H_{k+a,n},H_{k,n})$-bimodules. By Morita equivalence, we have an isomorphism of graded $K$-algebras
		\[
			\mathrm{End}^{\bullet}_{\left(H_{k+a,n},H_{k,n}\right)}\left(H_{k+a,n}e'_{[k+1,k+a]}\right) \simeq \mathrm{End}^{\bullet}_{\left(P_n^{\s_{k+a}\times\s_{n-k-a}},P_n^{\s_k\times\s_{n-k}}\right)}\left(e_{[1,k+a]}H_{k+a,n}e'_{[1,k]}e'_{[k+1,k+a]}\right).
		\]
		There is an isomorphism of $\left(P_n^{\s_{k+a}\times\s_{n-k-a}},P_n^{\s_k\times\s_{n-k}}\right)$-bimodules
		\[
				\left \{ \begin{array}{rcl}
				P_n^{\s_k\times\s_a\times\s_{n-k-a}} & \simeq & e_{[1,k+a]}H_{k+a,n}e'_{[1,k]}e'_{[k+1,k+a]}, \\
				P & \mapsto & e_{[1,k+a]}Px'_{[1,k]}x'_{[k+1,k+a]}.
				\end{array} \right. 
		\]
		By Lemma \ref{polsym}, $P_n^{\s_k\times\s_a\times\s_{n-k-a}}$ is cyclic generated by 1 as a $\left(P_n^{\s_{k+a}\times\s_{n-k-a}},P_n^{\s_k\times\s_{n-k}}\right)$-bimodule. It follows that
		\[
			\mathrm{End}^{\bullet}_{\cL(n)\mathrm{-bim}}\left(\Phi_n\left(E^{(a)}1_{\lambda}\right)\right) = P_{n}^{\s_{k}\times\s_{a}\times\s_{n-k-a}}.
		\]
		Thus, the result holds for $X=E^{(a)}1_{\lambda}$. The case $X=F^{(a)}1_{\lambda}$ is similar.
		
		Assume now that $X=q^sF^{(b)}E^{(a)}1_{\lambda}$ for $s \in \mathbb Z$ and $\lambda \geqslant b-a$. By Proposition \ref{adj}, we have
		\[
			\left|\Phi_n\left(F^{(b)}E^{(a)}1_{\lambda}\right)\right| = \sum_{i=0}^{a} q^{(2a-i)(\lambda+2a-i)}\qbin{\lambda+2a}{i}\qbin{a+b-i}{b}^2 \left|\Phi_n\left(F^{(a+b-i)}1_{\lambda+2(2a-i)}\right)\right|.
		\]
		Let us find the lowest degree term in that expression. The lowest degree term of $\qbin{\lambda+2a}{i}$ is $q^{-i(\lambda+2a-i)}$, and that of $\qbin{a+b-i}{b}^2$ is $q^{-2b(a-i)}$. Hence, the lowest degree term of $q^{(2a-i)(\lambda+2a-i)}\qbin{\lambda+2a}{i}\qbin{a+b-i}{b}^2$ is $q^{2(a-i)(\lambda+2a-b-i)}$. For $i=a$, this term is 1. For $i<a$, we have $(a-i)(\lambda+2a-b-i)>0$ since $\lambda \geqslant b-a$. Now using the first part of the proof, we know that for $n$ large enough and of the same parity as $\lambda$, the lowest degree term of $\left|\Phi_n\left(F^{(a+b-i)}1_{\lambda+2(2a-i)}\right)\right|$ is 1 for all $i \in \lbrace 0,\ldots,a\rbrace$. Putting all this together, we indeed have
		\[
			\left|\Phi_n\left(F^{(b)}E^{(a)}1_{\lambda}\right)\right| \in 1+q\mathbb{N}[q],
		\]
		when $n$ is large enough and of the same parity as $\lambda$. The case $X=q^sE^{(a)}F^{(b)}1_{\lambda}$ with $\lambda \leqslant b-a$ and $s \in \mathbb Z$ is proved similarly.
	\end{proof}

	In particular, if $X$ is an indecomposable object of $U1_{\lambda}$ and $n$ is large enough and of the same parity as $\lambda$, then $\Phi_n(X)$ is indecomposable and the morphism $\mathrm{End}_{\U}(X) \rightarrow \mathrm{End}_{\cL(n)\mathrm{-bim}}(\Phi_n(X))$ induced by $\Phi_n$ is local (both sides are equal to $K$). The next step is to check that the simple 2-representations distinguish non-isomorphic indecomposable 1-morphisms of $\U$.
	
	\begin{prop}\label{iso}
		Let $X,Y$ be indecomposable objects of $\mathcal U1_{\lambda}$ for some $\lambda \in \mathbb{Z}$. If $X$ and $Y$ are not isomorphic, then $\Phi_n(X)$ and $\Phi_n(Y)$ are not isomorphic for $n \in \mathbb N$ large enough and of the same parity as $\lambda$.
	\end{prop}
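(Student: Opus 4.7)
The plan is to combine Theorem \ref{indecofU} with the Hom space formula of Proposition \ref{adj} to reduce non-isomorphism of images to an explicit positivity check on $q$-exponents. I would write $X = q^s X_0$ and $Y = q^t Y_0$ with $X_0, Y_0$ among the unshifted indecomposables of Lauda's list, and split into cases according to the target weights, the shape ($EF$ versus $FE$), and whether $(a,b) = (c,d)$.

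Several subcases dispose of themselves. If $X$ and $Y$ have distinct target weights, their images live in disjoint summands of $\cL(n)\text{-bim}$ and cannot be isomorphic. Given the same target, the disjoint conditions $\lambda > b - a$ and $\lambda \leqslant b - a$ in Theorem \ref{indecofU} force $X_0$ and $Y_0$ to have the same shape. In the subcase $X_0 = Y_0$ with $s \neq t$, Proposition \ref{indec} gives $\mathrm{End}^\bullet(\Phi_n(X_0)) \in 1 + q\mathbb{N}[q]$ for $n$ large of the correct parity; hence $\mathrm{End}^\bullet(\Phi_n(X_0))$ is a connected graded local algebra whose units all lie in degree $0$, which forbids an isomorphism $q^s \Phi_n(X_0) \simeq q^t \Phi_n(X_0)$.

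The main case is $X_0 = F^{(b)}E^{(a)}1_\lambda$ and $Y_0 = F^{(d)}E^{(c)}1_\lambda$ with $\lambda > b - a = c - d$ and $(a,b) \neq (c,d)$, which I may assume satisfies $a > c$ (the case $X_0, Y_0$ of $EF$ form is analogous via the second isomorphism of Proposition \ref{adj}). I would apply Proposition \ref{adj} to expand $\mathrm{Hom}^\bullet(\Phi_n(X_0), \Phi_n(Y_0))$ as a sum over $i \in \{0, \ldots, c\}$. Since the lowest-degree term of $\qbin{m}{k}$ is $q^{-k(m-k)}$ (with coefficient $1$) and each $\mathrm{End}^\bullet(\Phi_n(F^{(m)}1_\mu))$ has constant term $1$ by Proposition \ref{indec}, the lowest-degree contribution of the $i$-th summand is a power $q^{e_i}$. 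The target is $e_i > 0$ uniformly, which would give $\mathrm{Hom}^\bullet(\Phi_n(X_0), \Phi_n(Y_0)) \in q\mathbb{N}[[q]]$ and hence $\mathrm{Hom}^0(\Phi_n(X_0), \Phi_n(Y_0)) = 0$, forcing non-isomorphism.

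The main obstacle is the elementary but somewhat delicate positivity check. Expanding gives $e_i = (a+c-i)(\lambda+a+c-i) - i(\lambda+a+c-i) - b(c-i) - d(a-i)$. Setting $p = a - c > 0$ (so $b - d = p$ as well) and reparametrizing by $j = c - i \in \{0, \ldots, c\}$, this should collapse after routine algebra to the manifestly positive form $e_i = (p + 2j)(\lambda + a - b + p) + 2j^2$. Since $\lambda > b - a$ gives $\lambda + a - b + p > p > 0$ and $p + 2j \geqslant p > 0$, one obtains $e_i > p^2 > 0$, completing the argument.
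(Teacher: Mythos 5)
Your reduction is essentially the paper's own argument, and the key computation is correct: with $p=a-c$ and $j=c-i$, the exponent $(a+c-i)(\lambda+a+c-i)-i(\lambda+a+c-i)-b(c-i)-d(a-i)$ does equal $(p+2j)(\lambda+a-b+p)+2j^2$, which is a closed form of the factorization $(a-i)(\lambda+2a-b-i)+(c-i)(\lambda+2c-d-i)$ used in the paper, and your positivity conclusion follows. (Minor typo: the equal-target condition is $b-a=d-c$, not $c-d$; your subsequent identity $b-d=a-c$ uses the correct relation.) The preliminary cases --- distinct targets, same target forcing the same shape, and $X_0=Y_0$ with $s\neq t$ via $\mathrm{End}^{\bullet}(\Phi_n(X_0))\in 1+q\mathbb{N}[q]$ --- are all fine.

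The gap is in the conclusion of your main case. There $X=q^sX_0$ and $Y=q^tY_0$ with $(a,b)\neq(c,d)$ but $s,t$ still arbitrary, and the statement ``$\mathrm{Hom}^0(\Phi_n(X_0),\Phi_n(Y_0))=0$, forcing non-isomorphism'' only excludes an isomorphism when $s=t$: an isomorphism $q^s\Phi_n(X_0)\simeq q^t\Phi_n(Y_0)$ is a homogeneous morphism $\Phi_n(X_0)\to\Phi_n(Y_0)$ of some degree $k$ together with an inverse of degree $-k$, and positivity of $\left<\Phi_n(X_0),\Phi_n(Y_0)\right>$ alone does not rule out, say, a degree $k>0$ isomorphism. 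You also need $\left<\Phi_n(Y_0),\Phi_n(X_0)\right>\in q\mathbb{N}[[q]]$ --- which does hold, either by rerunning your computation with the roles of $(a,b)$ and $(c,d)$ exchanged, or by noting that the expression in Proposition \ref{adj} is symmetric under this exchange up to replacing $F^{(a+d-i)}$ by $F^{(b+c-i)}$, whose graded endomorphism ring still has constant term $1$ by Proposition \ref{indec} --- and then conclude as the paper does: since $\Phi_n(X_0)$ and $\Phi_n(Y_0)$ are indecomposable and every nonzero homogeneous morphism in either direction has strictly positive degree, any composition $\Phi_n(X)\to\Phi_n(Y)\to\Phi_n(X)$ of homogeneous morphisms is zero or of positive degree, hence never the identity, so no isomorphism exists for any choice of shifts. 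With that addendum your proof is complete and coincides with the paper's.
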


	\begin{proof}
		Assume $X=F^{(b)}E^{(a)}1_{\lambda}$ and $Y=F^{(d)}E^{(c)}1_{\lambda}$ with $\lambda \geqslant b-a=d-c$, and $a\neq c$. By Proposition \ref{adj} we have
		\[
			\left<\Phi_n(X), \Phi_n(Y)\right> = \sum_{i=0}^{\mathrm{min} \left \{a,c\right \}} q^{(a+c-i)(\lambda+a+c-i)}\qbin{\lambda+a+c}{i}\qbin{b+c-i}{b}\qbin{a+d-i}{d} \left|\Phi_n\left(F^{(a+d-i)}1_{\lambda+2(a+c-i)}\right)\right|.
		\]
		The lowest degree term of of $q^{(a+c-i)(\lambda+a+c-i)}\qbin{\lambda+a+c}{i}\qbin{b+c-i}{b}\qbin{a+d-i}{d}$ is
		\[
			q^{(a+c-i)(\lambda +a+c-i)-i(\lambda+a+c-i)-b(c-i)-d(a-i)}.
		\]
		Let us rewrite the exponent. We have:
		\begin{align*}
			& (a+c-i)(\lambda +a+c-i)-i(\lambda+a+c-i)-b(c-i)-d(a-i)\\
			& = (a-i)(\lambda +a+c-i)+(c-i)(\lambda+a+c-i)-b(c-i)-d(a-i) \\
			& = (a-i)(\lambda+a+c-d-i) + (c-i)(\lambda+a-b+c-i) \\
			& = (a-i)(\lambda+2a-b-i) + (c-i)(\lambda+2c-d-i).
		\end{align*}
		Without loss of generality, we can assume that $a>c$. Then using $i\leqslant c<a$ and $\lambda \geqslant b-a=d-c$ we see that $(a-i)(\lambda+2a-b-i)>0$ and $(c-i)(\lambda+2c-d-i)\geqslant 0$. By Proposition \ref{indec}, we have $\left|\Phi_n\left(F^{(a+d-i)}1_{\lambda+2(a+c-i)}\right)\right| \in \mathbb{N}[q]$ for all $i \in \lbrace0, \ldots, \mathrm{min}(a,c)\rbrace$ if $n$ is large enough and of the same parity as $\lambda$. Hence, $\left<\Phi_n(X), \Phi_n(Y)\right> \in q\mathbb{N}[q]$ for such an $n$. By symmetry, we also have $\left<\Phi_n(X), \Phi_n(Y)\right> \in q\mathbb{N}[q]$ if $n$ is large enough and of the same parity as $\lambda$.
		
		Thus, we can fix $n \in \mathbb{N}$ such that $\left<\Phi_n(X), \Phi_n(Y)\right>,\left<\Phi_n(Y), \Phi_n(X)\right> \in q\mathbb{N}[q]$ and $\Phi_n(X),\Phi_n(Y)$ are indecomposable. Then a composition $\Phi_n(X) \rightarrow \Phi_n(Y) \rightarrow \Phi_n(X)$ of homogeneous morphisms of arbitrary degrees is zero or of positive degree. In particular it is not invertible. Hence $\Phi_n(X)$ and $\Phi_n(Y)$ are not isomorphic. The same argument holds if $X=q^sF^{(b)}E^{(a)}1_{\lambda}$ and $Y=q^rF^{(d)}E^{(c)}1_{\lambda}$ for some $r,s \in \mathbb{Z}$, and the case of $X=q^sE^{(a)}F^{(b)}1_{\lambda}$ and $Y=q^rE^{(c)}F^{(d)}1_{\lambda}$ is proved similarly.
	\end{proof}

	\begin{proof}[Proof of Theorem \ref{lifthom}]
		Note that every complex of 1-morphisms of $\U$ can be written as a direct sum of complexes of objects of $\U1_{\lambda}$, for $\lambda$ ranging over $\mathbb{Z}$. Hence, it suffices to prove the result for complex of objects of $\U1_{\lambda}$. By Propositions \ref{indec} and \ref{iso}, the family of functors $\left(\Phi_n : \U1_{\lambda} \rightarrow \cL(n)\mathrm{-bim}\right)_{n \in \mathbb N}$ satisfies the assumptions of Theorem \ref{liftnhgen}. Thus, the result follows from Theorem \ref{liftnhgen} and Corollary \ref{lifthegen}.
	\end{proof}

	\medskip

	\subsection{Extension to the derived category}
	
	The goal of this subsection is to show that the conclusions of Theorem \ref{lifthom} still hold if the null-homotopic assumption in (\ref{liftnh}) and the homotopy equivalence assumption in (\ref{lifthe}) are weakened to acyclic and quasi-isomorphism respectively, modulo some finiteness assumptions.
	
	\begin{thm}\label{liftder}
		\begin{enumerate}
			\item Let $C$ be a complex of 1-morphisms of $\U$ such that for every integrable 2-representation $\mathcal V$ and $M \in \mathcal V$, the complex $C(M)$ is bounded. Assume that for all $n\in \mathbb{N}$, $\Phi_n(C)$ is acyclic. Then $C$ is null-homotopic.
			
			\item Let $f$ be a morphism between complexes of 1-morphisms of $\U$ that satisfy the same condition as in (1). Assume that for all $n \in \mathbb{N}$, $\Phi_n(f)$ is a quasi-isomorphism. Then $f$ is a homotopy equivalence.
		\end{enumerate}	
	\end{thm}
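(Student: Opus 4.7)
Part (2) reduces to part (1) by applying the latter to $\mathrm{Cone}(f)$, since $f$ is a homotopy equivalence iff $\mathrm{Cone}(f)$ is null-homotopic, and similarly $\Phi_n(f)$ is a quasi-isomorphism iff $\Phi_n(\mathrm{Cone}(f))$ is acyclic. So I focus on part (1). The strategy is to combine the boundedness hypothesis with Theorem \ref{lifthom}(1) by showing that under the hypotheses, $\Phi_n(C)$ is not merely acyclic but in fact null-homotopic in $\cL(n)\mathrm{-bim}$ for every $n \in \mathbb N$; Theorem \ref{lifthom}(1) will then deliver the conclusion.

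Fix $n$. The boundedness hypothesis, applied to the regular modules $H_{k,n} \in \cL(n)_\lambda$ (which are objects of the integrable 2-representation $\cL(n)$), implies that $\Phi_n(C)$ is bounded in $\cL(n)\mathrm{-bim}$: each bimodule $\Phi_n(X^r)$ is determined as a left module by its evaluation $X^r(H_{k,n})$, so the range of nonzero terms of $\Phi_n(C)$ is controlled by that of $C(H_{k,n})$, which is bounded. Moreover, for every object $M \in \cL(n)$, the complex $\Phi_n(C)(M) = C(M)$ is bounded, acyclic by hypothesis, and consists of finitely generated projective modules over $H_{k',n}$; such a complex is automatically null-homotopic in $\cL(n)_{\lambda'}$.

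The decisive step is to promote these pointwise null-homotopies to a bimodule-level (i.e., natural) null-homotopy of $\Phi_n(C)$. Following the methodology of \cite{ChR}, this uses the explicit structure of the bimodules $\Phi_n(X)$. Via the Morita equivalence $H_{k,n}\sim P_n^{\s_k\times\s_{n-k}}$ together with the divided-power idempotents $e_{[r,\ell]}, e'_{[r,\ell]}$ and Lemma \ref{polsym}, each $\Phi_n(X)$ becomes a cyclic bimodule over a tensor product of polynomial invariant rings. Using this description, in conjunction with the decompositions \eqref{decomp}--\eqref{decomp2} and the hom-space computations of Proposition \ref{adj}, one produces a bimodule-level contracting homotopy by showing that each short exact sequence $0 \to \ker(d^i) \to \Phi_n(C)^i \to \mathrm{im}(d^i) \to 0$ arising from the bounded acyclic complex $\Phi_n(C)$ splits in $\cL(n)\mathrm{-bim}$.

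The principal obstacle is exactly this last step. The bimodules $\Phi_n(X)$ are projective as left and as right modules individually, but they are not projective as bimodules (after Morita, they have positive projective dimension over the tensor product of the two polynomial invariant rings), so no abstract projectivity argument is available, and the splittings must be extracted from the specific polynomial-invariant structure of $\cL(n)\mathrm{-bim}$. A secondary subtlety is that the hypothesis only bounds $C$ through its action on modules, not $C$ itself in $\U$; unbounded $C$ can be handled by decomposing along the $\mathbb Z$-grading of the objects of $\U$ and, within each weight space, combining Lemma \ref{gauss} with Corollary \ref{liftssi} and Propositions \ref{indec}--\ref{iso} to strip off contractible summands and reduce to the bounded case before applying the preceding argument.
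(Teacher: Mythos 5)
Your overall outline (reduce (2) to (1) via the cone, then show that $\Phi_n(C)$ is actually null-homotopic in $\cL(n)\mathrm{-bim}$ and invoke Theorem \ref{lifthom}(1)) is the same as the paper's, but the step you yourself flag as the ``principal obstacle'' is precisely the content of the theorem, and your proposal does not prove it. You assert that each sequence $0 \to \ker(d^i) \to \Phi_n(C)^i \to \mathrm{im}(d^i) \to 0$ splits in $\cL(n)\mathrm{-bim}$, to be ``extracted from the specific polynomial-invariant structure,'' but no mechanism is given, and none of the tools you cite (Lemma \ref{polsym}, the decompositions \eqref{decomp}--\eqref{decomp2}, Proposition \ref{adj}, nor the Krull--Schmidt machinery of Lemma \ref{gauss}, Corollary \ref{liftssi}, Propositions \ref{indec}--\ref{iso}) produces such bimodule-level splittings: as you note, the terms $\Phi_n(X)$ are not projective as bimodules, and a bounded acyclic complex of non-projective bimodules is in general not null-homotopic, so acyclicity alone cannot be upgraded by a formal argument. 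The pointwise null-homotopies on objects $M \in \cL(n)$ (which you do obtain correctly, since the terms are right-flat and $C(M)$ is a bounded acyclic complex of projectives) do not assemble into a natural homotopy without a genuinely new input, and that input is missing.

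The paper supplies this input by a different device, which your proposal does not contain even in outline: the category $\cL(n)\mathrm{-bim}$ is itself an abelian \emph{integrable 2-representation} of $\U$ (acting by tensoring on the left), and $\Phi_n(C) = C(M_n)$ for the single object $M_n = \bigoplus_{k=0}^n H_{k,n}$. One then proves (Proposition \ref{nhonobj}, following \cite{ChR}) that for any abelian integrable 2-representation $\mathcal V$ and any $M \in \mathcal V$ the complex $C(M)$ is null-homotopic: first for $F^iN$ with $N$ a highest weight object, via the universal property of $\cL(n)$ (Theorem \ref{univ}) and the fact that $C(F^iP_n)$ is a bounded acyclic complex of projectives; then acyclicity of $C(M)$ for arbitrary $M$ via the dual complex $C^{\vee}C(M)$ and an adjunction/highest-weight argument; and finally null-homotopy of $C(M)$ via \cite[Corollary 5.33]{ChR}, which realizes $M$ as a direct summand of $S(A)$ for a morphism of 2-representations $S : A\mathrm{-mod} \to \mathcal V$, so that $C(M)$ is a summand of $S(C(A))$ with $C(A)$ a bounded acyclic complex of projective $A$-modules. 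It is this reduction to projective modules over an auxiliary algebra --- not a direct splitting inside $\cL(n)\mathrm{-bim}$ --- that closes the gap; without it (or an equivalent substitute) your argument stops exactly where the real difficulty begins. Your final remark about handling unbounded $C$ is also unnecessary: once $\Phi_n(C)$ is known to be null-homotopic for all $n$, Theorem \ref{lifthom}(1) already treats unbounded complexes.
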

	
	The first step is the following result from \cite{ChR}. Since this result is not explicitly singled out in \cite{ChR}, we provide a proof. However all of the arguments are taken from \cite{ChR}, with slight modifications to change from the abelian setup of minimal 2-representations to simple 2-representations.
	
	\begin{prop}\label{nhonobj}
		Let $C$ be a complex of 1-morphisms of $\U$ such that for every integrable 2-representation $\mathcal V$ and $M \in \mathcal V$, the complex $C(M)$ is bounded. Assume that for all $n \in \mathbb{N}$, $\Phi_n(C)$ is acyclic. Then for any abelian integrable 2-representation $\mathcal V$ and $M \in \mathcal V$, the complex $C(M)$ is null-homotopic.
	\end{prop}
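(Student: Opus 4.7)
The plan is to combine the universal property of simple 2-representations (Theorem \ref{univ}) with an induction on an $E$-depth invariant of $M$, in the spirit of \cite{ChR}.

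\textbf{Base case.} Suppose $M \in \mathcal V_n$ satisfies $E(M) = 0$. By Theorem \ref{univ}, there is a morphism of 2-representations $R_M : \cL(n) \to \mathcal V$ sending $P_n$ to $M$. Because $R_M$ intertwines the $\U$-action, there is an isomorphism $C(M) \simeq R_M(\Phi_n(C)(P_n))$, so it is enough to show that $\Phi_n(C)(P_n)$ is null-homotopic in $\cL(n)$. The hypothesis that $\Phi_n(C)$ is acyclic as a complex of bimodules, together with the explicit description of $\Phi_n(F^{(a)}1_{\lambda})$ and $\Phi_n(E^{(a)}1_{\lambda})$ recalled before the statement as bimodules of the form $eH_{k,n}$ and $H_{k,n}e'$ (which are projective on each side, since each $H_{k,n}$ is free over $H_{k-1,n}$), ensures that tensoring with $P_n$ preserves acyclicity. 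The resulting complex $\Phi_n(C)(P_n) = C(P_n)$ is bounded, because $\cL(n)$ is itself an integrable 2-representation and the boundedness hypothesis on $C$ applies. Each of its terms lies in some $H_{k,n}\mathrm{-proj}$ and is therefore projective, and a bounded acyclic complex of projectives is null-homotopic. Applying $R_M$ produces a null-homotopy of $C(M)$.

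\textbf{Inductive step.} For general $M \in \mathcal V_\lambda$, let $r$ be the smallest integer with $E^r(M) = 0$, which exists by integrability, and induct on $r$. The case $r \leqslant 1$ is the base case. For $r \geqslant 2$, set $N = E^{(r-1)}(M) \in \mathcal V_{\lambda + 2(r-1)}$. Since $E \circ E^{(r-1)} \simeq [r]\,E^{(r)}$ in $\U$ and $E^r(M) = 0$, we have $E(N) = 0$, so $N$ is highest weight. The complex $CF^{(r-1)}$ still satisfies the hypotheses of the proposition: its image under $\Phi_n$ is obtained from $\Phi_n(C)$ by tensoring with the two-sided projective bimodule $\Phi_n(F^{(r-1)})$, which preserves acyclicity, and boundedness is immediate. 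The base case therefore provides a null-homotopy of $C(F^{(r-1)}(N))$. Following \cite{ChR}, the adjunction unit supplies a morphism $M \to F^{(r-1)}(N)$ (up to shift) that fits into a short exact sequence in $\mathcal V$ whose third term has strictly smaller $E$-depth. Combining the inductive hypothesis applied to this third term with the null-homotopy of $C(F^{(r-1)}(N))$ yields a null-homotopy of $C(M)$.

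\textbf{Main obstacle.} The delicate point is the final step of the induction: passing from null-homotopies of $C(A)$ and $C(B)$ in a short exact sequence $0 \to A \to B \to C \to 0$ to a null-homotopy of $C(B)$ is not formal, since it requires a coherent lifting of the contracting homotopies and termwise-splitness is not automatic. This is precisely the part of the argument genuinely imported from \cite{ChR}, where the specific adjunction-induced short exact sequences are analyzed so that the contracting homotopies assemble correctly, with the boundedness of $C$ used at each stage to control the process. The only modifications relative to \cite{ChR} come from translating between the abelian minimal categorifications considered there and the additive simple 2-representations $\cL(n)$ used here, which is tractable because both are governed by bimodules over the tower of algebras $H_{k,n}$.
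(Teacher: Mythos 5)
Your base case is essentially the paper's first step: for a highest weight object $N$ one uses Theorem \ref{univ} to pull back along $R_N$, and the point is that $C(F^iP_n)$ is a \emph{bounded acyclic complex of projectives} in $\cL(n)$, hence null-homotopic, so $C(F^iN)\simeq R_N(C(F^iP_n))$ is null-homotopic. That part is fine. The inductive step, however, is where the real content of the proposition lies, and it is not proved. The inference you need --- given a short exact sequence $0\to A\to B\to B'\to 0$ in $\mathcal V$ with two of $C(A),C(B),C(B')$ null-homotopic, conclude the third is null-homotopic --- is simply false at the level of the homotopy category: a short exact sequence of complexes only gives a triangle in $D(\mathcal V)$, so at best you get that the third complex is \emph{acyclic}. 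Since the terms $C^i(M)$ are in general not projective objects of $\mathcal V$, acyclicity does not upgrade to null-homotopy, and this upgrade is precisely the difficulty the proposition is about. Labelling this "the part imported from \cite{ChR}" does not close the gap, because there is no result in \cite{ChR} about assembling contracting homotopies along these short exact sequences; moreover the exact sequence itself is shaky, since the unit map $M\to F^{(r-1)}E^{(r-1)}(M)$ (up to shift) need not be injective, so you would in any case have both a kernel and a cokernel to control.

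The paper's route is different and avoids the problem. After the first step, it proves that $C(M)$ is \emph{acyclic} for every $M$, by a derived-category argument: setting $X=C^{\vee}C(M)$ with $C^{\vee}$ the right dual, one has $\mathrm{End}_{D(\mathcal V)}(C(M))\simeq\mathrm{Hom}_{D(\mathcal V)}(M,X)$, and if $X$ were not acyclic one extracts a highest weight object $N=E^kH^j(X)$ in the lowest nonvanishing cohomology and derives a contradiction from $\mathrm{Hom}_{D(\mathcal V)}(F^kN,X[-j])\simeq\mathrm{Hom}_{D(\mathcal V)}(C(F^kN),C(M)[-j])=0$, using the first step. Then, to pass from acyclic to null-homotopic, it invokes \cite[Corollary 5.33]{ChR}: $M$ is a direct summand of $S(A)$ for a morphism of 2-representations $S:A\mathrm{-mod}\to\mathcal V$ with the action preserving $A\mathrm{-proj}$; the acyclicity statement applied to $A\mathrm{-mod}$ shows $C(A)$ is a bounded acyclic complex of projective $A$-modules, hence null-homotopic, and so is its image $C(S(A))\simeq S(C(A))$, hence also the summand $C(M)$. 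If you want to salvage your induction you would need to replace the homotopy-theoretic two-out-of-three step by exactly this kind of projective-generator argument, at which point the induction on $E$-depth becomes unnecessary.
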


	\begin{proof}
		Let $\mathcal{V}$ be an abelian integrable 2-representation of $\U$. The first step is to show that for every highest weight object $N \in \mathcal V$ and $i \geqslant 0$, the complex $C(F^iN)$ is null-homotopic. Denote by $n$ the weight of $N$. By Proposition \ref{univ}, we have a morphism of 2-representations $R_N : \cL(n) \rightarrow \mathcal V$ sending $P_n$ to $N$. The complex of projective modules $C(F^iP_n)$ is bounded and acyclic by assumption. Hence $C(F^iP_n)$ is null-homotopic. It follows that $C(F^iN) \simeq R_N(C(F^iP_n))$ is null-homotopic as well.
		
		The second step is to show that for any $M\in \mathcal V$, $C(M)$ is acyclic. Let $X=C^{\vee}C(M)$, where $C^{\vee}$ denotes the right dual of $C$. Since $\mathrm{End}_{D(\mathcal V)}(C(M)) \simeq \mathrm{Hom}_{D(\mathcal V)}(M,X)$, it suffices to show that $X$ is acyclic. Assume it is not, and denote by $j$ the smallest integer such that $H^j(X) \neq 0$. Let $k$ be the maximal integer such that $E^kH^j(X)\neq 0$. Put $N=E^kH^j(X)\simeq H^j(E^kX)$, a highest weight object of $\mathcal V$. Then we have
		\[
			\mathrm{Hom}_{D(\mathcal V)}\left(F^kN,X[-j]\right) \simeq \mathrm{Hom}_{D(\mathcal V)}\left(N,E^kX[-j]\right) \neq 0,
		\]
		the isomorphism begin up to a degree shift. However, by the first step we have
		\[
			\mathrm{Hom}_{D(\mathcal V)}(F^kN,X[-j]) \simeq \mathrm{Hom}_{D(\mathcal V)}(C(F^kN),C(M)[-j])=0,
		\]
		which is a contradiction. Thus $C(M)$ is acyclic.
		
		Finally, we prove that $C(M)$ is null-homotopic. By \cite[Corollary 5.33]{ChR}, there exists an algebra $A$, a 2-representation of $\U$ on $A\mathrm{-mod}$ restricting to a 2-representation on $A\mathrm{-proj}$, and a morphism of 2-representations $S : A\mathrm{-mod} \rightarrow \mathcal{V}$ such that $M$ is a direct summand of $S(A)$. Applying the previous step to the 2-representation $A\mathrm{-mod}$ and the object $A$, we obtain that $C(A)$ is acyclic. Since it is also a bounded complex of projective $A$-modules, we conclude that $C(A)$ is null-homotopic. Hence, $C(S(A))\simeq S(C(A))$ is null-homotopic, from which we deduce that $C(M)$ is null-homotopic.
	\end{proof}

	\begin{comment}\begin{lem}\label{nhasbim}
		Let $C$ be a complex of 1-morphisms of $\U$ satisfying the same assumptions as in Proposition \ref{nhonobj}. Then for all $n \in \mathbb{N}$, $\Phi_n(C)$ is homotopy equivalent to zero.
	\end{lem}

	\begin{proof}
		The action of $\U$ on $\cL(n)$ induces a structure of 2-representation of $\U$ on $\cL(n)\mathrm{-bim}$ by tensoring on the left. Consider the object
		\[
			M = \bigoplus_{k=0}^n H_{k,n} \in \cL(n)\mathrm{-bim},
		\]
		where each summand $H_{k,n}$ is seen as a $(H_{k,n},H_{k,n})$-bimodule with left and right actions given by multiplication. By Proposition \ref{nhonobj}, the complex $C(M)$ is homotopy equivalent to zero. However $C(M) = \Phi_n(C)$, and the result follows.
	\end{proof}\end{comment}

	\begin{proof}[Proof of Theorem \ref{liftder}]
		Let $n \in \mathbb N$. The 2-representation of $\U$ on $\cL(n)$ induces a 2-representation of $\U$ on $\cL(n)\mathrm{-bim}$ by tensoring on the left. Consider the object
		\[
			M_n = \bigoplus_{k=0}^n H_{k,n} \in \cL(n)\mathrm{-bim},
		\]
		where each summand $H_{k,n}$ is seen as a $(H_{k,n},H_{k,n})$-bimodule with left and right actions given by multiplication. If $C$ is a complex of 1-morphisms of $\U$ then we have $C(M_n) = \Phi_n(C)$ as objects of $\cL(n)\mathrm{-bim}$.
		
		Assume now that for every integrable 2-representation $\mathcal V$ and $M \in \mathcal V$, the complex $C(M)$ is bounded, and that $\Phi_n(C)$ is acyclic for all $n \in \mathbb{N}$. By Proposition \ref{nhonobj}, the complex $C(M_n)$ is null-homotopic. Thus $\Phi_n(C)$ is null-homotopic. The first statement then follows from Theorem \ref{lifthom}(\ref{liftnh}). We deduce the second statement by applying the first one to $C=\mathrm{Cone}(f)$. 
	\end{proof}

	\bigskip
	
	\section{Application to the Rickard complex}
	
	One of the important features of Chuang and Rouquier's approach to categorification of representations of $\mathfrak{sl}_2$ in \cite{ChR} is a categorification of the action of the simple reflection of $\mathrm{SL}_2$. This takes the form of a complex, whose definition we recall now.
	
	\begin{defi}
		Given $\lambda \in \mathbb{Z}$, we define a complex $\Theta 1_{\lambda}$ of objects of $1_{-\lambda}\mathcal{U}1_{\lambda}$ as follows.
		\begin{itemize}
			\item The $r^{\mathrm{th}}$ component of $\Theta1_{\lambda}$ is $\Theta^r1_{\lambda}=q^{-r}F^{(\lambda+r)}E^{(r)}$, where we put $F^{(\ell)}=E^{(\ell)}=0$ if $\ell<0$.
			
			\item The differential of $\Theta1_{\lambda}$ is given by the composition of $F^{(\lambda+r)}\eta E^{(r)} : F^{(\lambda+r)}E^{(r)} \rightarrow F^{(\lambda+r)}FEE^{(r)}$ with the projection on $F^{(\lambda+r+1)}E^{(r+1)}$ given by the idempotents $e_{\lambda+r+1}$ and $e'_{r+1}$. The fact that the differential squares to zero follows from $e_2e_2'=0$.
		\end{itemize}
	\end{defi}
	
	Let $n \in \mathbb{N}$. For all $\lambda \in \mathbb{Z}$, the complex $\Phi_n\left(\Theta1_{\lambda}\right)$ is bounded. Let us describe these complexes explicitly. Let $k \in \lbrace 0,\ldots,\frac{n}{2}\rbrace$. Then as complexes of graded $(H_{n-k,n},H_{k,n})$-bimodules we have
	\begin{multline}\label{2k<n}
		q^{\frac{(2k-n)(2k-n-1)}{2}}\Phi_n\left(\Theta 1_{-n+2k}\right) = \\
	 0 \rightarrow H_{n-k,n}e'_{[k+1,n-k]} \rightarrow \cdots \rightarrow e_{[n-k+1,r]}H_{r,n}e'_{[k+1,r]} \rightarrow \cdots \rightarrow e_{[n-k+1,n]}H_ne'_{[k+1,n]} \rightarrow 0.
	\end{multline}	
	This complex has $k+1$ non-zero terms, and the last non-zero term is in cohomological degree $n-k$. Now let $k \in \lbrace \frac{n}{2},\ldots,n\rbrace$. Then as complexes of graded $(H_{n-k,n},H_{k,n})$-bimodules we have
	\begin{multline}\label{2k>n}
		q^{\frac{(2k-n)(2k-n-1)}{2}}\Phi_n\left(\Theta 1_{-n+2k}\right) = \\
		0 \rightarrow e_{[n-k+1,k]}H_{k,n} \rightarrow \cdots \rightarrow e_{[n-k+1,r]}H_{r,n}e'_{[k+1,r]} \rightarrow \cdots \rightarrow e_{[n-k+1,n]}H_ne'_{[k+1,n]} \rightarrow 0.
	\end{multline}
	This complex has $n-k+1$ non-zero term, the last one being in cohomological degree $n-k$. In both cases, the differential is the composition of the inclusion with multiplication by the suitable idempotents. The main result regarding these complexes is the following.
	
	\begin{thm}[\cite{ChR}]\label{theta}
		Let $n \in \mathbb N$ and let $k \in \lbrace 0, \ldots, n\rbrace$. The cohomology of $\Phi_n\left(\Theta1_{-n+2k}\right)$ is concentrated in top degree $n-k$, and $H^{n-k}\left(\Phi_n\left(\Theta1_{-n+2k}\right)\right)$ is an equivalence $\cL(n)_{-n+2k} \xrightarrow{\sim} \cL(n)_{n-2k}$.
	\end{thm}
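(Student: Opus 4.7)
The plan is to compute the cohomology of $\Phi_n(\Theta 1_{-n+2k})$ directly by constructing explicit bases for each term $B_\rho := e_{[n-k+1,\rho]}H_{\rho,n}e'_{[k+1,\rho]}$ of the complex (\ref{2k<n})--(\ref{2k>n}) and analyzing the differential in these bases. Recall that the differential $B_\rho \to B_{\rho+1}$ is the inclusion $H_{\rho,n}\hookrightarrow H_{\rho+1,n}$ from the unit of adjunction, followed by multiplication by the idempotents $e_{[n-k+1,\rho+1]}$ and $e'_{[k+1,\rho+1]}$. I will focus on the case $k \leq n/2$; the other case is handled identically via (\ref{2k>n}).

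The first step is to Morita-reduce each $B_\rho$ using (\ref{morita}). Since $H_{n-k,n}$ and $H_{k,n}$ are Morita equivalent to $P_n^{\s_{n-k}\times\s_k}$ and $P_n^{\s_k\times\s_{n-k}}$ respectively, each $B_\rho$ becomes a concrete bimodule over polynomial rings, which I expect to describe via a basis indexed by the double cosets $\s_{[n-k+1,\rho]}\backslash\s_\rho/\s_{[k+1,\rho]}$ with explicit monomial representatives in $P_n$. In these bases the differential takes a Koszul-type shape that incorporates the variable $x_{\rho+1}$ into both sets of partial symmetrizations, with coefficients computable from the nilHecke commutation relations of Definition \ref{catsl2} and Lemma \ref{polsym}.

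The second step is to prove $H^r = 0$ for $r < n-k$ by constructing a contracting homotopy $h^r : B_{\rho+1}\to B_\rho$ from Demazure-like operators that invert the partial symmetrization with respect to $x_{\rho+1}$ on either side. The identity $d^{r-1}h^r + h^{r+1}d^r = \mathrm{id}_{B_\rho}$ should reduce to a local statement expressing the freeness of $P_n^{\s_{[n-k+1,\rho]}}$ over $P_n^{\s_{[n-k+1,\rho+1]}}$, and similarly on the right, which is standard. The third step is to identify $H^{n-k}$: after Morita reduction the top term $B_n$ becomes an explicit $(P_n^{\s_{n-k}\times\s_k}, P_n^{\s_k\times\s_{n-k}})$-bimodule, and quotienting by the image of $d^{n-k-1}$ should produce a bimodule that is free of rank one on each side, with the two actions related by the ring isomorphism $P_n^{\s_k\times\s_{n-k}}\xrightarrow{\sim} P_n^{\s_{n-k}\times\s_k}$ induced by the permutation exchanging $\{x_1,\ldots,x_k\}$ and $\{x_{n-k+1},\ldots,x_n\}$. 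Such a bimodule is a Morita progenerator and induces the desired equivalence $\cL(n)_{-n+2k}\xrightarrow{\sim}\cL(n)_{n-2k}$.

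The main obstacle I anticipate is the construction of the contracting homotopy: the two idempotent systems $e_{[n-k+1,\bullet]}$ and $e'_{[k+1,\bullet]}$ live in overlapping subalgebras of $H_\rho$ and interact non-trivially through the nilHecke relations, so the choice of double-coset representatives must respect both systems, and the explicit normalizations $e_{[r,\ell]} = x_{[r,\ell]}\tau_{\omega_0[r,\ell]}$ and $e'_{[r,\ell]} = \tau_{\omega_0[r,\ell]}x'_{[r,\ell]}$ must be carefully tracked in order to get the correct degree shifts and signs.
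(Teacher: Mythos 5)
Your overall architecture — explicit bases for the terms of $\Phi_n(\Theta 1_{-n+2k})$, a combinatorial analysis of the differential, and identification of the top cohomology as a rank-one bimodule twisted by the permutation exchanging the first $k$ and last $n-k$ variables (which is exactly the paper's $\sigma_k$ in Theorem \ref{coho}) — is the same as the paper's. But the core of the argument is missing, and the one concrete mechanism you propose for it would not work as stated. First, a basis of $e_{[n-k+1,\rho]}H_{\rho,n}e'_{[k+1,\rho]}$ indexed by double cosets $\s_{[n-k+1,\rho]}\backslash\s_\rho/\s_{[k+1,\rho]}$ is not available: already for the top term the rank over the relevant invariant ring is of order $n!/(n-k)!$, far exceeding the number of double cosets, so at best one gets a filtration with standard pieces, not a monomial basis. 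The paper instead takes a basis as a \emph{left} $P_n^{\s_{[n-k+1,n]}}$-module, indexed by strictly decreasing exponent sequences times minimal-length representatives of \emph{one-sided} cosets (Theorem \ref{basis}), and this choice is what makes the differential computable.

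Second, and more seriously, your claim that the contracting-homotopy identity ``reduces to'' the freeness of $P_n^{\s_{[n-k+1,\rho]}}$ over $P_n^{\s_{[n-k+1,\rho+1]}}$ ``and similarly on the right'' treats the two idempotent systems independently, but exactness below the top degree is governed precisely by their interaction: in the paper's basis the differential sends $b_m(a,\omega)$ either to another basis element or to $0$, and the vanishing occurs exactly when the \emph{right} idempotent kills $\tau_{m-r}\cdots\tau_m\tau_\omega e'_{[k,m+1]}$, i.e.\ when $\omega(m)\geqslant m-r$ (Proposition \ref{diff}); exactness then amounts to the set-theoretic matching of these kernel elements with the image of the injective map $\varphi$ (Proposition \ref{phi}, Corollary \ref{kerim}). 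A homotopy built only from Demazure operators inverting the left partial symmetrization cannot see this part of the kernel, so no product of two one-sided freeness statements suffices — this is exactly the ``interaction of the two idempotent systems'' you flag as an anticipated obstacle, and your proposal offers no mechanism to resolve it. The same issue recurs in your third step: that the quotient by $\mathrm{im}(d^{n-k-1})$ is free of rank one with the $\sigma_k$-twisted right action is not automatic; in the paper it requires the explicit basis of the top cohomology plus a separate argument (Lemma \ref{right}) showing that the commutation relation $bP=\sigma_k(P)b$ holds only modulo coboundaries, which again uses the explicit description of the image of the differential. As it stands, the proposal reproduces the paper's plan but defers its essential content.
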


	In \cite{ChR}, Chuang and Rouquier prove this theorem for the minimal 2-representations rather than the simple 2-representations, and use it to prove that $\Theta$ provides derived equivalences on all integrable 2-representations of $\U$. In subsection \ref{actonsimp}, we give another proof of Theorem \ref{theta} in the setting of simple 2-representations. Our proof is based on finding explicit bases for the terms of $\Phi_n\left(\Theta1_{-n+2k}\right)$ and computing the action of the differential in these bases. In subsections \ref{invert} and \ref{compat}, we apply our main result on faithfulness of simple 2-representations, Theorem \ref{liftder}, to prove that $\Theta$ is invertible up to homotopy and that there are homotopy equivalences $\Theta E1_{\lambda} \simeq q^{\lambda+2}F\Theta1_{\lambda}[-1]$ for all $\lambda \in \mathbb{Z}$.
	
	\begin{comment}
	\begin{thm}\label{thetae}
		Let $\lambda \in \mathbb{Z}$. There is a homotopy equivalence $\Theta E1_{\lambda} \simeq F\Theta1_{\lambda}[-1]$.
	\end{thm}

	The strategy to prove Theorem \ref{thetae} is to define a morphism of complexes $G_{\lambda} : \Theta E1_{\lambda} \rightarrow F\Theta1_{\lambda}[-1]$, and prove that $\Phi_n\left(G_{\lambda}\right)$ is a quasi-isomorphism for all $n \in \mathbb{N}$, which can be checked with an explicit computation.
	\end{comment}
	\medskip
	
	\subsection{Action of $\Theta$ on simple 2-representations}\label{actonsimp} In this subsection, we study the complexes (\ref{2k<n}) and (\ref{2k>n}) and give the proof of Theorem \ref{theta}. To do so, we will need to compute various graded dimensions, so we start by introducing notation and recalling some elementary results.\\
	
	The graded $K$-algebras $P_n$ and $P_n^{\s_n}$ are locally finite and have graded dimensions given by
	\begin{align*}
		\mathrm{grdim}\left(P_n\right) = \frac{1}{\left(1-q^2\right)^n}, \quad \mathrm{grdim}\left(P_n^{\s_n}\right) = \frac{1}{\left(1-q^2\right)\ldots\left(1-q^{2n}\right)}.
	\end{align*}
	The following notation will be convenient: given $k \geqslant 0$, we define a variant of the quantum integer $\left \{ k\right \}$ and the quantum factorial $\left \{ k\right\}!$ by
	\[
	\left \{ k\right \} = \frac{q^{2k}-1}{q^2-1}, \quad \left \{ k\right \}! = \prod_{\ell=1}^{k} \left \{\ell\right \}.
	\]
	With these definitions, we have $\mathrm{grdim}(P_n) = \mathrm{grdim}\left(P_n^{\s_n}\right) \lbrace n \rbrace!$. More generally, given two integers $k < \ell \in \lbrace 1,\ldots, n\rbrace$, we have
	\begin{equation}\label{gdimpn}
		\mathrm{grdim}(P_n) = \mathrm{grdim}\left( P_n^{\s_{[k,\ell]}}\right) \lbrace \ell - k +1 \rbrace!.
	\end{equation}
	From $H_n \simeq \mathrm{End}_{P_n^{\s_n}}^{\bullet}(P_n)$, we deduce that the graded dimension of $H_n$ is given by
	\begin{equation}\label{grdimhn}
	\mathrm{grdim}(H_n) = \mathrm{grdim}(P_n^{\s_n})\lbrace n \rbrace!\overline{\lbrace n\rbrace!},
	\end{equation}
	where $\overline{\ \cdot \ }$ refers to the automorphism switching $q$ and $q^{-1}$ on formal series. If $M$ is a finitely generated graded $H_n$-module, then $M$ is locally finite as a $K$-vector space, and we have $M \simeq \overline{\lbrace n\rbrace!} \ e_n(M) \simeq \lbrace n \rbrace ! \ e_n'(M)$ as graded $P_n^{\s_n}$-modules. In particular, we have
	\begin{equation}\label{grdimdp}
	\mathrm{grdim}(M) = \overline{\lbrace n\rbrace!} \ \mathrm{grdim}(e_nM) = \lbrace n \rbrace! \ \mathrm{grdim}(e_n'M).
	\end{equation}
	Finally, we will need the two following formulas:
	\begin{equation}\label{hilbertsym}
		\sum_{\omega \in \s_n} q^{2l(\omega)}= \left \{ n \right \}!,
	\end{equation}
	\begin{align}\label{qbinomial}
		\sum_{0\leqslant u_1 < \ldots <u_r \leqslant n} q^{2(u_1+\ldots+u_r)} &= q^{r(r-1)}\frac{\lbrace n \rbrace!}{\lbrace r \rbrace! \lbrace n-r \rbrace!}.
	\end{align}
	The first one can be proved easily by induction on $n$, see \cite[Theorem 6.1]{qcal} for a proof of the second one.
	
	\medskip
	
	\subsubsection{Bases} Fix an integer $n \in \mathbb N$. Let $k,\ell,m$ be integers such that $k,\ell\leqslant m \leqslant n$. We construct a basis for $e_{[\ell,m]}H_{m,n}e'_{[k,m]}$ as a left $P_n^{\s[\ell,n]}$-module, which is the general form of a term of $\Phi_n(\Theta)$. Put
	\begin{align*}
		& X_{\ell,m} = \left \{ (a_{\ell},\ldots,a_m), \, 0 \leqslant a_i \leqslant n-i \right \}, \\
		& Y_{\ell,m} = \left \{ (a_{\ell},\ldots,a_m) \in X_{\ell,m}, \, a_{\ell}>\ldots>a_m\right \}. 
	\end{align*}	
	Note that these sets actually depend on $n$, that we assume fixed for the rest of this subsection. We denote by $S_{k,m}$ the set of minimal length representatives of left cosets of $\s_{[k,m]}$ in $\s_m$. For $a \in Y_{\ell,m}$ and $\omega \in S_{k,m}$ we define
	\[
		b_{m}(a,\omega) = e_{[\ell,m]}x^a\tau_{\omega}e'_{[k,m]} \in e_{[\ell,m]}H_{m,n}e'_{[k,m]},
	\]
	where $x^a = x_{\ell}^{a_l}\ldots x_{m}^{a_m}$. Our goal is to prove the following result.
	
	\begin{thm}\label{basis}
		The set $\left \{ b_m(a,\omega), \ a \in Y_{\ell,m}, \ \omega \in S_{k,m} \right \}$ is a basis of $e_{[\ell,m]}H_{m,n}e'_{[k,m]}$ as a left $P_n^{\s[\ell,n]}$-module.
	\end{thm}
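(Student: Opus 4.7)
The plan is to exhibit $\{b_m(a,\omega)\}$ as a $P_n^{\s_{[\ell,n]}}$-spanning family via successive reductions using the nil Hecke relations, and then to close by a graded dimension comparison.

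For the reduction to $H_m$, observe that $e_{[\ell,m]}, e'_{[k,m]} \in H_m \subset H_{m,n}$, so the tensor decomposition $H_{m,n} \simeq H_m \otimes_K K[x_{m+1},\dots,x_n]^{\s_{n-m}}$ gives
\[
e_{[\ell,m]} H_{m,n} e'_{[k,m]} \simeq \bigl(e_{[\ell,m]} H_m e'_{[k,m]}\bigr) \otimes_K K[x_{m+1},\dots,x_n]^{\s_{n-m}},
\]
with $P_n^{\s_{[\ell,n]}}$ acting diagonally through its inclusion into $K[x_1,\dots,x_m] \otimes K[x_{m+1},\dots,x_n]^{\s_{n-m}}$ (which holds because $\s_{[m+1,n]} \subset \s_{[\ell,n]}$). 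The variables $x_1,\dots,x_{\ell-1}$ commute with $e_{[\ell,m]}$ and lie inside $P_n^{\s_{[\ell,n]}}$, so they can be absorbed into the coefficients, reducing attention to monomials in $x_\ell,\dots,x_m$.

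For the spanning argument, start from the PBW-type $K$-basis $\{x^a \tau_\omega : a \in \mathbb{N}^m, \omega \in \s_m\}$ of $H_m$ and apply three reductions to $e_{[\ell,m]} x^a \tau_\omega e'_{[k,m]}$. \emph{Coset reduction:} write $\omega = \omega_1 \omega_2$ with $\omega_1 \in S_{k,m}$, $\omega_2 \in \s_{[k,m]}$ and lengths adding; since $l(\omega_2 \omega_0[k,m]) < l(\omega_2) + l(\omega_0[k,m])$ for $\omega_2 \ne e$, we have $\tau_\omega e'_{[k,m]} = 0$ unless $\omega \in S_{k,m}$. \emph{Exponent reduction:} using the basis $\{x^a : 0 \le a_i \le n-i\}$ of $P_n$ over $P_n^{\s_n}$ together with relation (iii) of Definition \ref{catsl2}, push each $x_i^{a_i}$ past $\tau_\omega$ to reduce, modulo $P_n^{\s_n}$-coefficients, to exponents $a \in X_{\ell,m}$. \emph{Antisymmetrization:} since $e_{[\ell,m]}$ factors through $\tau_{\omega_0[\ell,m]}$, which acts on $P_n$ as the Demazure operator $\partial_{\omega_0[\ell,m]}$, and alternating polynomials in $x_\ell,\dots,x_m$ form a free $P_{[\ell,m]}^{\s_{[\ell,m]}}$-module of rank $1$ generated by the Vandermonde with a classical basis indexed by strictly decreasing exponents, one further reduces $a \in X_{\ell,m}$ to $a \in Y_{\ell,m}$ at the cost of $P_n^{\s_{[\ell,n]}}$-scalars.

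For linear independence, I match graded dimensions on both sides. Via the identifications of Subsection 3.3, $e_{[\ell,m]} H_{m,n} e'_{[k,m]}$ is, up to a grading shift, isomorphic as a bimodule to $\Phi_n\bigl(F^{(m-\ell+1)} E^{(m-k+1)} 1_{-n+2(k-1)}\bigr)$, and Proposition \ref{adj} gives an explicit formula for its graded dimension, from which the graded rank over $P_n^{\s_{[\ell,n]}}$ follows via (\ref{grdimhn}) and (\ref{grdimdp}). On the other side, the graded dimension of the free module on $\{b_m(a,\omega) : (a,\omega) \in Y_{\ell,m} \times S_{k,m}\}$ simplifies via (\ref{hilbertsym}) and (\ref{qbinomial}) to the same $q$-binomial expression. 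Combined with the spanning above, equality of graded dimensions forces linear independence.

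The main obstacle I expect is the antisymmetrization step in the spanning argument: certifying that the passage from $a \in X_{\ell,m}$ to $a \in Y_{\ell,m}$ uses \emph{exactly} $P_n^{\s_{[\ell,n]}}$-coefficients rather than a larger ring. This requires tracking how the prefactor $x_{[\ell,m]}$ inside $e_{[\ell,m]}$ interacts with $\partial_{\omega_0[\ell,m]}$ applied to an arbitrary monomial in $x_\ell,\dots,x_m$, and invoking the classical Schur-type description of $\s_{[\ell,m]}$-alternating polynomials; the cross-term bookkeeping with the nil Hecke relations in the exponent reduction is also delicate but routine.
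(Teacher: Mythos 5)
Your overall architecture (prove spanning by explicit reduction, then get linear independence from the graded-dimension identity) is a legitimate mirror image of the paper's argument, and the dimension identity itself is the same computation via (\ref{hilbertsym}), (\ref{qbinomial}), (\ref{grdimhn}), (\ref{grdimdp}). But the spanning argument has a genuine gap at exactly the step you defer as ``delicate but routine''. First, the exponent reduction is mis-specified: expanding a monomial in the basis $\{x^a,\ 0\leqslant a_i\leqslant n-i\}$ of $P_n$ over $P_n^{\s_n}$ reintroduces the variables $x_{m+1},\ldots,x_n$, which lie neither in $K[x_\ell,\ldots,x_m]$ nor in $P_n^{\s_{[\ell,n]}}$; likewise your ``reduction to $H_m$'' leaves the tensor factor $K[x_{m+1},\ldots,x_n]^{\s_{n-m}}$ unabsorbed, since it is not contained in the coefficient ring $P_n^{\s_{[\ell,n]}}$. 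The fact that repairs both points is that $\{x^a,\ a\in X_{\ell,m}\}$ is a basis of $P_n^{\s_{[m+1,n]}}$ over $P_n^{\s_{[\ell,n]}}$ -- the ``well-known'' input of the paper's Lemma \ref{step1} -- but this only gets you spanning by the $X_{\ell,m}$-indexed family. Second, and more seriously, the passage from $X_{\ell,m}$ to $Y_{\ell,m}$ cannot be done by antisymmetrizing the polynomial factor while keeping $\omega$ fixed: in $H_{m,n}$, $e_{[\ell,m]}x^a$ is not $\pm e_{[\ell,m]}x^{b}$ plus $P_n^{\s_{[\ell,n]}}$-multiples of terms of the same shape, because commuting polynomials through $\tau_{\omega_0[\ell,m]}$ creates corrections supported on other $\tau_z$'s, whose coefficients are only partially symmetric and, when re-expanded over $P_n^{\s_{[\ell,n]}}$, regenerate monomials in $x_{m+1},\ldots,x_n$; termination is not obvious. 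A minimal example: $n=3$, $\ell=1$, $m=k=2$, so $e_{[1,2]}=x_2\tau_1$, $e'_{[2,2]}=1$, $S_{2,2}=\{1,s_1\}$, $Y_{1,2}=\{(1,0),(2,0),(2,1)\}$. For $a=(0,1)\in X_{1,2}\setminus Y_{1,2}$ one checks $b_2((0,1),1)=x_2\tau_1x_2 = -\,b_2((1,0),1)-b_2((2,0),s_1)$: the expansion necessarily involves a basis element with $\omega=s_1$ and exponent $(2,0)$, which no rearrangement of the exponent vector $(0,1)$ at fixed $\omega$ can produce. So the reduction mixes the $\tau_\omega$-components, and the Schur/Vandermonde picture you invoke (which is a statement about $\partial_{\omega_0[\ell,m]}$ applied to polynomials, i.e.\ about leading terms only) does not certify it.

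By contrast, the paper runs the two halves the other way around, which sidesteps this difficulty: it proves freeness first, via a triangularity argument -- $\tau_{\omega_0[\ell,m]}x^a\in\partial_{\omega_0[\ell,m]}(x^a)+\sum_{z>1}P_n^{\s_{[m+1,n]}}\tau_z$, so $b_m(a,\omega)$ has leading coefficient $x_{[\ell,m]}\partial_{\omega_0[\ell,m]}(x^a)$ on $\tau_\omega e'_{[k,m]}$, and Lemma \ref{step1} (the $\partial_{\omega_0[\ell,m]}(x^a)$, $a\in Y_{\ell,m}$, form a $P_n^{\s_{[\ell,n]}}$-basis of $P_n^{\s_{[\ell,m]}\times\s_{[m+1,n]}}$) together with freeness of $\{\tau_ze'_{[k,m]}\}$ over $P_n$ gives independence -- and then spanning falls out of the graded-dimension count, so the cross-term bookkeeping never has to be done. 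If you insist on your direction you will need Lemma \ref{step1} (or an equivalent) anyway, plus a genuine inductive argument to make the $X_{\ell,m}\to Y_{\ell,m}$ reduction close up. One further inaccuracy: Proposition \ref{adj} computes Hom-spaces of bimodules (natural transformations), not the graded dimension of $e_{[\ell,m]}H_{m,n}e'_{[k,m]}$ as a graded vector space; the dimension you need is obtained, as in the paper, from (\ref{grdimhn}) and (\ref{grdimdp}) together with the Morita equivalences (\ref{morita}).
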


	\begin{ex}
		Let us give an explicit example to illustrate the result. Assume $n=m=3$, $\ell = 1$ and $k = 2$. Then we have $Y_{1,3} = \left \{\left(2,1,0\right) \right \}$, $S_{2,3} = \left \{ 1, \ s_1, \ s_2s_1 \right \}$ and we obtain the following basis after simplification:
		\[
			\left \{ e_{[1,3]}x_1^2x_2, \ -e_{[1,3]}x_1x_2, \ e_{[1,3]}x_2 \right \}.
		\]
	\end{ex}

	We will need the following Lemma in the proof of Theorem \ref{basis}.
	
	\begin{lem}\label{step1}
		The set $\left \{ \partial_{\omega_0[\ell,m]}(x^a), \ a \in Y_{\ell,m} \right \}$ is a basis of $P_n^{\s_{[\ell,m]}\times\s_{[m+1,n]}}$ as a $P_n^{\s_{[\ell,n]}}$-module. 
	\end{lem}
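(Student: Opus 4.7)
The plan is to identify $\partial_{\omega_0[\ell,m]}(x^a)$ with a classical Schur polynomial via the bialternant formula, match graded dimensions of the source and target, and conclude via a graded Nakayama argument. First, since both $P_n^{\s_{[\ell,n]}}$ and $P_n^{\s_{[\ell,m]}\times\s_{[m+1,n]}}$ contain $K[x_1,\ldots,x_{\ell-1}]$ as a common free factor and the claimed basis elements involve only $x_\ell,\ldots,x_m$, the statement reduces to the analogous claim over $R = K[x_\ell,\ldots,x_n]$ with $W = \s_{[\ell,n]}$ and $W_0 = \s_{[\ell,m]}\times\s_{[m+1,n]}$. The operator $\partial_{\omega_0[\ell,m]}$ is $R^{\s_{[\ell,m]}}$-linear (being a composition of $\partial_i$'s for $\ell \leq i < m$) and commutes with the $\s_{[m+1,n]}$-action since it only involves $x_\ell,\ldots,x_m$, so $\partial_{\omega_0[\ell,m]}(x^a)$ lands in $R^{W_0}$ for each $a \in Y_{\ell,m}$. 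This defines a graded $R^W$-linear map $\phi$ from the free $R^W$-module on $Y_{\ell,m}$ to $R^{W_0}$.

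Next, I would apply the bialternant formula
\[
\partial_{\omega_0[\ell,m]}(P) = V_{[\ell,m]}^{-1} \sum_{\omega \in \s_{[\ell,m]}} \mathrm{sgn}(\omega) \, \omega(P), \qquad V_{[\ell,m]} = \prod_{\ell \leq i < j \leq m}(x_j - x_i),
\]
which for $a \in Y_{\ell,m}$ yields $\partial_{\omega_0[\ell,m]}(x^a) = \pm s_{\lambda}(x_\ell,\ldots,x_m)$, where $\lambda_i = a_i - (m-i)$ is a partition fitting in the $(m-\ell+1)\times(n-m)$ rectangle. The graded dimensions match: using (\ref{gdimpn}) the graded rank of $R^{W_0}$ over $R^W$ is $\{n-\ell+1\}!/(\{m-\ell+1\}!\{n-m\}!)$, and by (\ref{qbinomial}) together with the bijection $Y_{\ell,m} \leftrightarrow \{\lambda : \lambda \subseteq (m-\ell+1)\times(n-m)\}$ the sum $\sum_{a \in Y_{\ell,m}} q^{\deg \partial_{\omega_0[\ell,m]}(x^a)}$ evaluates to the same quantity.

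Since both sides are free graded $R^W$-modules of equal graded rank (freeness of $R^{W_0}$ over $R^W$ being Chevalley's theorem for the parabolic $W_0 \subset W$), it suffices to show that $\phi$ is surjective. By the graded Nakayama lemma this reduces to showing that the Schur polynomials $s_\lambda(x_\ell,\ldots,x_m)$ for $\lambda$ in the rectangle span the coinvariant algebra $R^{W_0}/R^W_+ R^{W_0}$ over $K$. The main obstacle is this last step, which is the classical Schubert-basis theorem for the cohomology of the Grassmannian $\mathrm{Gr}(m-\ell+1, n-\ell+1)$. A self-contained proof can be obtained from Lemma \ref{polsym}, which ensures that $R^{W_0}$ is generated as an $R^W$-algebra by the elementary symmetric polynomials $e_k^{[\ell,m]} = s_{1^k}(x_\ell,\ldots,x_m)$ (all already in the rectangle), together with Pieri's rule and the relations $e_k^{[m+1,n]} \equiv 0 \pmod{R^W_+}$ for $k > n-m$, used to iteratively push Schur polynomials that escape the rectangle back inside.
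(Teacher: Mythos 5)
Your proposal is correct in outline, but it takes a genuinely different and substantially heavier route than the paper. The paper obtains generation almost for free: since $\left\{x^a,\ a\in X_{\ell,m}\right\}$ is a basis of $P_n^{\s_{[m+1,n]}}$ over $P_n^{\s_{[\ell,n]}}$ and $\partial_{\omega_0[\ell,m]}\colon P_n^{\s_{[m+1,n]}}\rightarrow P_n^{\s_{[\ell,m]}\times\s_{[m+1,n]}}$ is a surjective $P_n^{\s_{[\ell,n]}}$-linear map, the elements $\partial_{\omega_0[\ell,m]}(x^a)$ for $a\in X_{\ell,m}$ generate the target, and the antisymmetry $\partial_{\omega_0[\ell,m]}s_i=-\partial_{\omega_0[\ell,m]}$ cuts $X_{\ell,m}$ down to $Y_{\ell,m}$; the same graded-dimension count that you perform then upgrades generation to a basis. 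You instead identify $\partial_{\omega_0[\ell,m]}(x^a)$ with the Schur polynomials indexed by partitions in the $(m-\ell+1)\times(n-m)$ rectangle (correct, via the bialternant formula, valid over any $K$) and derive surjectivity from graded Nakayama plus the classical fact that these Schur polynomials span the Grassmannian coinvariant ring $R^{W_0}/R^W_+R^{W_0}$ --- a true but much stronger input, which you only sketch; the auxiliary freeness of $R^{W_0}$ over $R^W$ is also more than needed, since equality of locally finite graded dimensions together with surjectivity already forces injectivity. What your route buys is the explicit Schur/Schubert description of the basis and the link to the cohomology of the Grassmannian; what the paper's route buys is a short, self-contained argument resting only on the surjectivity of the top Demazure operator. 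If you do flesh out your sketch, note one imprecision: the relation you invoke, $\epsilon_k(x_{m+1},\ldots,x_n)\equiv 0$ for $k>n-m$, is vacuous as stated (it is an identity of polynomials); the relation you actually need is $\epsilon_k(x_{m+1},\ldots,x_n)\equiv(-1)^k h_k(x_\ell,\ldots,x_m)$ modulo $R^W_+$, which yields $h_k(x_\ell,\ldots,x_m)\equiv 0$ for $k>n-m$, and then Pieri/Jacobi--Trudi straightening pushes the escaping Schur polynomials back into the rectangle.
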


	\begin{proof}
		It is well-known that the set $\left \{ x^a, \ a \in X_{\ell,m}\right \}$ is a basis of $P_n^{\s_{[m+1,n]}}$ over $P_n^{\s_{[\ell,n]}}$. Since the $P_n^{\s_{[\ell,n]}}$-linear map $\partial_{\omega_0[\ell,m]} : P_n^{\s_{[m+1,n]}} \rightarrow P_n^{\s_{[\ell,m]}\times\s_{[m+1,n]}}$ is surjective, the set $\left \{ \partial_{\omega_0[\ell,m]}(x^a), \ a \in X_{\ell,m}\right \}$ generates $P_n^{\s_{[\ell,m]}\times\s_{[m+1,n]}}$ as a $P_n^{\s_{[\ell,n]}}$-module. Furthermore, we have $\partial_{\omega_0[\ell,m]}s_i=-\partial_{\omega_0[\ell,m]}$ for $i \in \lbrace \ell,\ldots,m-1\rbrace$ and $\partial_{\omega_0[\ell,m]}(x^a)=0$ if there exists two indexes $i\neq j$ such that $a_i=a_j$. Thus if $a \in X_{\ell,m}$, the element $\partial_{\omega_0[\ell,m]}(x^a)$ is a multiple of $\partial_{\omega_0[\ell,m]}(x^b)$ for some $b \in Y_{\ell,m}$. It follows that the set $\left \{ \partial_{\omega_0[\ell,m]}(x^a), \ a \in Y_{\ell,m} \right \}$ generates $P_n^{\s_{[\ell,m]}\times\s_{[m+1,n]}}$ as a $P_n^{\s_{[\ell,n]}}$-module.
		
		To conclude, we check that the graded dimensions match. We have
		\begin{align*}
			\mathrm{grdim}\left(\bigoplus_{a \in Y_{\ell,m}} P_n^{\s_{[\ell,n]}} \partial_{\omega_0[\ell,m]}(x^a) \right) &= \mathrm{grdim}\left(P_n^{\s_{[\ell,n]}}\right)\sum_{a \in Y_{\ell,m}} q^{2(a_{\ell}+\ldots+a_m)-(m-\ell)(m-\ell+1)} \\
			&= \mathrm{grdim}\left(P_n^{\s_{[\ell,n]}}\right) \frac{\lbrace n-\ell+1\rbrace!}{\lbrace m-\ell +1\rbrace! \lbrace n-m\rbrace!} \\
			&= \mathrm{grdim}\left(P_n^{\s_{[\ell,m]}\times\s_{[m+1,n]}}\right),
		\end{align*}
		the second equality following from (\ref{qbinomial}) and the third one from (\ref{gdimpn}). Hence the result is proved.
	\end{proof}

	\begin{comment}
	\begin{lem}\label{step2}
		The set $\left \{ b_m(a,\omega), \ a \in Y_{\ell,m}, \ \omega \in S_{k,m} \right \}$ is free over $P_n^{\s[\ell,n]}$.
	\end{lem}

	\begin{proof}
		Let $a \in Y_{\ell,m}$. In $H_{m,n}$, we have a decomposition of the form
		\[
			\tau_{\omega_0[\ell,m]}x^a \in \partial_{\omega_0[\ell,m]}(x^a) + \sum_{z >1} P_n^{\s_{[m+1,n]}}\tau_{z}.
		\]
		Hence for $\omega \in S_{k,m}$, we have
		\[
			e_{[\ell,m]}x^a\tau_{\omega}e'_{[k,m]} \in x^{[\ell,m]}\partial_{\omega_0[\ell,m]}(x^a)\tau_{\omega}e'_{[k,m]} + \sum_{z>\omega} P_n^{\s_{[m+1,n]}}\tau_{z}e'_{[k,m]}.
		\]
		Using Lemma \ref{step1} and the fact that $\left(\tau_ze'_{[k,m]}\right)_{z\in S_{k,m}}$ is free over $P_n$, the result follows.
	\end{proof}

	We can now conclude the proof of Theorem \ref{basis}.
	\end{comment}
	
	\begin{proof}[Proof of Theorem \ref{basis}]
		Let us start by proving that the set $\left \{ b_m(a,\omega), \ a \in Y_{\ell,m}, \ \omega \in S_{k,m} \right \}$ is free over $P_n^{\s[\ell,n]}$. Let $a \in Y_{\ell,m}$. In $H_{m,n}$ we have a decomposition of the form
		\[
		\tau_{\omega_0[\ell,m]}x^a \in \partial_{\omega_0[\ell,m]}(x^a) + \sum_{z >1} P_n^{\s_{[m+1,n]}}\tau_{z}.
		\]
		Hence for $\omega \in S_{k,m}$ we have
		\[
		e_{[\ell,m]}x^a\tau_{\omega}e'_{[k,m]} \in x^{[\ell,m]}\partial_{\omega_0[\ell,m]}(x^a)\tau_{\omega}e'_{[k,m]} + \sum_{z>\omega} P_n^{\s_{[m+1,n]}}\tau_{z}e'_{[k,m]}.
		\]
		Using Lemma \ref{step1} and the fact that the set $\left \{ \tau_ze'_{[k,m]}, \, z\in S_{k,m} \right \}$ is free over $P_n$, we obtain that the set $\left \{ b_m(a,\omega), \ a \in Y_{\ell,m}, \ \omega \in S_{k,m} \right \}$ is free over $P_n^{\s[\ell,n]}$.
		To conclude, it suffices to compute graded dimensions. On the one hand, by (\ref{grdimhn}) and (\ref{grdimdp}) we have
		\[
			\mathrm{grdim}\left(e_{[\ell,m]}H_{m,n}e'_{[k,m]}\right) = \mathrm{grdim}\left(P_n^{\s_{m}\times\s_{n-m}}\right) \frac{\lbrace m \rbrace! \overline{\lbrace m \rbrace!}}{\overline{\lbrace m-\ell+1 \rbrace!} \ \overline{\lbrace m-k+1 \rbrace!}}.
		\]
		On the other hand, the left $P_n^{\s[\ell,n]}$-submodule spanned by the set $\left \{ b_m(a,\omega), \ a \in Y_{\ell,m}, \ \omega \in S_{k,m} \right \}$ has graded dimension
		\begin{equation}\label{comp}
			\mathrm{grdim}\left(P_n^{\s_{[\ell,n]}}\right) \left(\sum_{a \in Y_{\ell,m}} q^{2(a_{\ell}+\ldots+a_m)} \right) \left( \sum_{\omega \in S_{k,m}}  q^{-2l(\omega)} \right).
		\end{equation}
		By (\ref{qbinomial}) we have
		\[
			\sum_{a \in Y_{\ell,m}} q^{2(a_{\ell}+\ldots+a_m)} = q^{(m-\ell)(m-\ell-1)}\frac{\lbrace n-\ell+1 \rbrace!}{\lbrace m-\ell+1 \rbrace! \ \lbrace n-m \rbrace!} = \frac{\lbrace n-\ell+1 \rbrace!}{\overline{\lbrace m-\ell+1 \rbrace!} \ \lbrace n-m \rbrace!}.
		\]
		By (\ref{hilbertsym}) we have
		\[
			\sum_{\omega \in S_{k,m}} q^{-2l(\omega)} = \frac{\overline{\lbrace m \rbrace!}}{\overline{\lbrace m-k+1 \rbrace!}}.
		\]
		Hence the graded dimension in equation (\ref{comp}) is equal to
		\begin{align*}
			&\mathrm{grdim}\left(P_n^{\s_{[\ell,n]}}\right)\frac{\lbrace n-\ell+1 \rbrace!}{\overline{\lbrace m-\ell+1 \rbrace!} \ \lbrace n-m \rbrace!} \ \frac{\overline{\lbrace m \rbrace!}}{\overline{\lbrace m-k+1 \rbrace!}} \\
			&= \mathrm{grdim}\left(P_n^{\s_{[1,m]}\times\s[m+1,n]}\right) \frac{\lbrace m \rbrace! \overline{\lbrace m \rbrace!}}{\overline{\lbrace m-\ell+1 \rbrace}! \ \overline{\lbrace m-k+1 \rbrace!}} \\
			&= \mathrm{grdim}\left(e_{[\ell,m]}H_{m,n}e'_{[k,m]}\right),
		\end{align*}
		and the proof is complete.
	\end{proof}

	\medskip

	\subsubsection{Combinatorics of the differential} We now study the effect of the differential of $\Phi_n\left(\Theta\right)$ on the bases given by Theorem \ref{basis}. On a general term of $\Phi_n(\Theta)$, the differential has the form
	\[
		d_m : \left \{ \begin{array}{rcl}
			e_{[\ell,m]}H_{m,n}e'_{[k,m]} & \rightarrow & e_{[\ell,m+1]}H_{m+1,n}e'_{[k,m+1]}, \\
			h & \mapsto & e_{[\ell,m+1]}he'_{[k,m+1]},
		\end{array} \right.
	\]
	where the integers $k,\ell,m$ depend on the weight and the cohomological degree. Remark that $d_m$ is a $P_n^{\s_{[\ell,n]}}$-linear map. We will explicitly determine the kernel and image of $d_m$ in terms of the bases of Theorem \ref{basis}. To do so, we prove that either $d_m(b_m(a,\omega)) =0$ or $d_m(b_m(a,\omega)) = b_{m+1}(\varphi(a,\omega))$, where $\varphi$ is an injective map that we define below. Hence the study of the map $d_m$ can be done by studying the combinatorics of the sets $Y_{\ell,m}$ and $S_{k,m}$ and the map $\varphi$. \\
	
	We now introduce the necessary notation to define the map $\varphi$. Given $r \in \lbrace 0,\ldots, m-\ell\rbrace$, we define
	\[
		Y_{m,\ell}^r = \left \{ a \in Y_{m,\ell} \, \vert \, \forall \, i \leqslant r, \, a_{m-i} =i \, \text{ and } \, a_{m-r-1} > r+1 \right \}.
	\]
	The subsets $(Y^r_{m,\ell})r$ of $Y_{m,\ell}$ are disjoint and we denote by $Y^+_{\ell,m}$ the complement of their union. More explicitly, the elements of $Y^+_{\ell,m}$ are the sequences $a \in Y_{m,\ell}$ such that $a_m>0$. Given $u \in \lbrace 1,\ldots,m\rbrace$, we denote by $S^u_{k,m}$ (resp. $S^{\geqslant u}_{k,m}$, $S^{<u}_{k,m}$) the subset of $S_{k,m}$ consisting of the elements $\omega$ such that $\omega(m)=u$ (resp. $\omega(m) \geqslant u$, $\omega(m)<u$).
	
	\begin{ex}
		Assume $n=4$, $m=3$, $k=2$ and $\ell = 1$. Then we have
		\begin{align*}
			Y_{1,3} = \left \{ (2,1,0), (3,1,0), (3,2,0), (3,2,1) \right \},
			S_{2,3} = \lbrace 1, s_1, s_2s_1 \rbrace,
		\end{align*}
		and
		\begin{align*}
			& Y_{1,3}^0 = \left \{ (3,2,0) \right \}, \quad Y^1_{1,3} = \left \{ (3,1,0) \right \}, \\
			& Y_{1,3}^2 = \left \{ (2,1,0) \right \}, \quad Y^{+}_{1,3} = \left \{ (3,2,1) \right \}, \\
			& S_{2,3}^3 = \lbrace 1, s_1 \rbrace, \quad S_{2,3}^2 = \lbrace s_2s_1 \rbrace.
		\end{align*}
	\end{ex}
	
	We now define a map $\varphi_Y : Y_{\ell,m} \rightarrow Y_{\ell,m+1}$. If $a\in Y^+_{\ell,m}$ we put $\varphi_Y(a)=(a,0)$. If $a\in Y^r_{\ell,m}$ we put $\varphi_Y(a)=(a_{\ell},\ldots,a_{m-r-1},r+1,r,\ldots,0)$ (so we have increased the entries $a_{m-r},\ldots,a_m$ by 1 and added a 0 as the last entry). \begin{comment}For $u \geqslant 0$, we define a map $\varphi_S^u : S_{k,m}^{<u} \rightarrow S_{k,m+1}$ by $\varphi_S^u(\omega) = s_{u}\ldots s_m\omega$. We also let $\varphi_S^+ : S_{k,m} \rightarrow S_{k,m+1}$ be the map induced by the inclusion $\s_m \hookrightarrow \s_{m+1}$.\end{comment} 
	We can now define the map $\varphi$ as follows
	\[
		\varphi : \left \{ \begin{array}{rcl}
			\left(Y_{\ell,m}^{+}\times S_{k,m}\right) \bigcup \left(\displaystyle\bigcup_{r=0}^{m-\ell} \left(Y_{\ell,m}^r\times S_{k,m}^{<m-r}\right) \right) & \rightarrow & Y_{\ell,m+1}\times S_{k,m+1} \\
			(a,\omega) & \mapsto & \left \{ \begin{array}{ll}
				(\varphi_Y(a),\omega) & \text{if} \ a \in Y_{\ell,m}^+, \\
				(\varphi_Y(a),s_{m-r}\ldots s_m\omega) & \text{if} \ a \in Y_{\ell,m}^r.
			\end{array} \right.
		\end{array} \right.
	\] 
	
	\begin{prop}\label{phi}
		The map $\varphi$ is injective and has image
		\[
			\mathrm{im}(\varphi) = \bigcup_{r=0}^{m+1-\ell} Y^r_{\ell,m+1}\times S_{k,m+1}^{\geqslant m+1-r}.
		\]
	\end{prop}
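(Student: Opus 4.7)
The plan is to prove both claims simultaneously by writing down an explicit inverse $\psi$ to $\varphi$ on the set $\bigcup_{r=0}^{m+1-\ell} Y^r_{\ell,m+1}\times S^{\geq m+1-r}_{k,m+1}$. The key observation is that the value $\sigma(m+1)$ of the second coordinate of $\varphi(a,\omega)$ records which piece of the domain $(a,\omega)$ came from. Indeed, every $\omega\in S_{k,m}$ fixes $m+1$ (when viewed in $\mathfrak{S}_{m+1}$), and a direct computation shows that $s_{m-r}\cdots s_m$ acts as the cyclic permutation sending $m-r\mapsto m-r+1\mapsto \cdots\mapsto m+1\mapsto m-r$. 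Thus $\sigma(m+1)=m+1$ precisely when $(a,\omega)$ comes from the $Y^+_{\ell,m}$-piece, and $\sigma(m+1)=m-r$ precisely when $(a,\omega)$ comes from the $Y^r_{\ell,m}$-piece. This observation alone already yields injectivity.

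Given $(b,\sigma)$ in the claimed image, the inverse is then forced. If $\sigma(m+1)=m+1$, I would set $\omega=\sigma|_{\{1,\ldots,m\}}$ and $a=(b_\ell,\ldots,b_m)$; since $b$ is strictly decreasing with $b_{m+1}=0$, we have $b_m\geq 1$, so $a\in Y^+_{\ell,m}$. If $\sigma(m+1)=m-r$ with $r\geq 0$, I would set $\omega=s_m\cdots s_{m-r}\,\sigma$ and $a=(b_\ell,\ldots,b_{m-r-1},r,r-1,\ldots,0)$. The hypothesis $\sigma\in S^{\geq m+1-r}_{k,m+1}$ translates, for $b\in Y^s_{\ell,m+1}$, into $s\geq r+1$, i.e.\ the canonical tail of $b$ already has length at least $r+2$. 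This is exactly what is needed to guarantee that the last $r+2$ entries of $b$ are $r+1,r,\ldots,0$, so that the equation $\varphi_Y(a)=b$ is consistent, and that $b_{m-r-1}>r+1$, so that $a\in Y^r_{\ell,m}$.

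The only nontrivial verification is that $\omega=s_m\cdots s_{m-r}\,\sigma$ lies in $S^{<m-r}_{k,m}$. For this I would use that $\sigma\in S_{k,m+1}$ is characterized by $\sigma(k)<\sigma(k+1)<\cdots<\sigma(m+1)$; combined with $\sigma(m+1)=m-r$, this forces $\sigma(k),\ldots,\sigma(m)$ all to be strictly less than $m-r$, hence fixed by the product $s_m\cdots s_{m-r}$, which permutes only the values in $\{m-r,\ldots,m+1\}$. Meanwhile this product sends $\sigma(m+1)=m-r$ back to $m+1$, so $\omega$ agrees with $\sigma$ on $\{1,\ldots,m\}$ and fixes $m+1$. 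We conclude $\omega\in S_{k,m}$ with $\omega(m)=\sigma(m)<m-r$, as required. The identities $\varphi\circ\psi=\mathrm{id}$ and $\psi\circ\varphi=\mathrm{id}$ then follow by direct unwinding. The main conceptual step is isolating $\sigma(m+1)$ as the invariant that separates the pieces of the domain; the remaining combinatorics are routine, with only mild care needed at the boundary $r=m-\ell$, where the inequality $a_{m-r-1}>r+1$ becomes vacuous.
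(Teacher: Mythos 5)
Your argument is correct and is essentially the paper's proof in different packaging: both hinge on the observation that the value of the second coordinate at $m+1$ (equal to $m+1$ on the $Y^+_{\ell,m}$ piece and to $m-r$ on the $Y^r_{\ell,m}$ piece) separates the pieces of the domain, and your explicit inverse $\psi$ encodes the same combinatorial facts that the paper obtains by computing $\varphi$ of each piece and re-indexing the resulting double union. One small inaccuracy, which does not affect the conclusion: $\omega=s_m\cdots s_{m-r}\,\sigma$ agrees with $\sigma$ only on $\{k,\ldots,m\}$, not on all of $\{1,\ldots,m\}$ (for $j<k$ with $\sigma(j)>m-r$ the value gets shifted), but membership in $S^{<m-r}_{k,m}$ only requires that $\omega$ fix $m+1$ and be increasing on $\{k,\ldots,m\}$ with $\omega(m)<m-r$, which your argument does establish.
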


	\begin{proof}
		The map $\varphi$ sends $Y^+_{\ell,m}\times S_{k,m}$ to $Y_{\ell,m+1}\times S_{k,m+1}^{m+1}$ and $Y^r_{\ell,m}\times S_{k,m}^{<m-r}$ to $Y_{\ell,m+1}\times S_{k,m+1}^{m-r}$. Since the $S_{k,m+1}^u$ are disjoint for distinct values of $u$, it suffices to prove that $\varphi_{\vert Y_{\ell,m}^+\times S_{k,m}}$ and $\varphi_{\vert Y^r_{\ell,m}\times S_{k,m}^{<m-r}}$ are injective. However it is clear that the maps $(S_{k,m}\rightarrow S_{k,m+1}, \omega \mapsto \omega)$ and $(S_{k,m}^{<m-r} \rightarrow S_{k,m+1}, \omega \mapsto s_{m-r}\ldots s_m\omega)$ are injective. Hence $\varphi$ is injective.
		
		We now compute $\mathrm{im}(\varphi)$. We have
		\[
			\varphi\left(Y_{\ell,m}^+\times S_{k,m}\right) = \bigcup_{r'=0}^{m+1-\ell} Y_{\ell,m+1}^{r'}\times S_{k,m+1}^{m+1},
		\]
		and
		\[
			\varphi\left(Y_{\ell,m}^{r}\times S_{k,m}^{<m-r}\right) = \bigcup_{r'=r+1}^{m+1-\ell} Y_{\ell,m+1}^{r'}\times S_{k,m+1}^{m-r}.
		\]
		Hence
		\[
			\mathrm{im}(\varphi) = \left(\bigcup_{r'=0}^{m+1-\ell} Y_{\ell,m+1}^{r'}\times S_{k,m+1}^{m+1} \right)\bigcup \left( \bigcup_{r=0}^{m-\ell}\left( \bigcup_{r'=r+1}^{m+1-\ell} Y_{\ell,m+1}^{r'}\times S_{k,m+1}^{m-r}\right) \right).
		\]
		Switching the order of the two unions in the second term gives
		\[
			\mathrm{im}(\varphi) = \left(\bigcup_{r'=0}^{m+1-\ell} Y_{\ell,m+1}^{r'}\times S_{k,m+1}^{m+1} \right)\bigcup \left( \bigcup_{r'=1}^{m+1-\ell} \left( \bigcup_{r=0}^{r'-1}Y_{\ell,m+1}^{r'}\times S_{k,m+1}^{m-r}\right) \right).
		\]
		We can isolate the term $r'=0$ in the first union and merge the two remaining unions to obtain
		\begin{align*}
			\mathrm{im}(\varphi) &= \left(Y_{\ell,m+1}^{0}\times S_{k,m+1}^{m+1} \right) \bigcup \left(\bigcup_{r'=1}^{m+1-\ell} \left( \bigcup_{r=0}^{r'}Y_{\ell,m+1}^{r'}\times S_{k,m+1}^{m+1-r}\right) \right) \\
			&=\bigcup_{r'=0}^{m+1-\ell} Y^{r'}_{\ell,m+1}\times S_{k,m+1}^{\geqslant m+1-r'}.
		\end{align*}
	\end{proof}
	
	\begin{prop}\label{diff}
		Let $(a,\omega) \in Y_{\ell,m}\times S_{k,m}$. Then we have
		\[
			d_m(b_m(a,\omega)) = \left \{ \begin{array}{ll}
				0 & \text{if} \ (a,\omega) \in \displaystyle \bigcup_{r=0}^{m-\ell} Y^r_{\ell,m}\times S_{k,m}^{\geqslant m-r}, \\
				b_{m+1}(\varphi(a,\omega)) & \text{otherwise}.
			\end{array} \right.
		\]
	\end{prop}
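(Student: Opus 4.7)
My plan is to compute $d_m(b_m(a,\omega)) = e_{[\ell,m+1]}e_{[\ell,m]}x^a\tau_\omega e'_{[k,m]}e'_{[k,m+1]}$ directly in $H_{m+1,n}$ using the nil Hecke relations. The first step is an \emph{idempotent absorption} lemma:
\[
e_{[\ell,m+1]}e_{[\ell,m]} = e_{[\ell,m+1]}, \qquad e'_{[k,m]}e'_{[k,m+1]} = e'_{[k,m+1]}.
\]
Both identities follow from the reduced decomposition $\omega_0[\ell,m+1] = (s_\ell s_{\ell+1}\cdots s_m)\omega_0[\ell,m]$ (and its analogue $\omega_0[k,m+1] = (s_m s_{m-1}\cdots s_k)\omega_0[k+1,m+1]$ for the $e'$-side), together with the identity $\tau_{\omega_0[\ell,m]} x_{[\ell,m]} \tau_{\omega_0[\ell,m]} = \tau_{\omega_0[\ell,m]}$ (a consequence of $\partial_{\omega_0[\ell,m]}(x_{[\ell,m]})=1$ and of $\tau_v \tau_{\omega_0[\ell,m]}=0$ for $v\neq 1$). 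This reduces the problem to computing $e_{[\ell,m+1]}x^a\tau_\omega e'_{[k,m+1]}$. A crucial consequence of the factorization $e_{[\ell,m+1]}=x_{[\ell,m+1]}\tau_{\omega_0[\ell,m+1]}$ is the vanishing $e_{[\ell,m+1]}\tau_i = 0$ for every $i \in \{\ell,\ldots,m\}$, since each such $s_i$ is a right descent of $\omega_0[\ell,m+1]$.

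If $a \in Y^+_{\ell,m}$ (i.e.\ $a_m > 0$), then the extended sequence $(a,0)$ lies in $Y_{\ell,m+1}$, $x^{(a,0)} = x^a$ because $x_{m+1}^0 = 1$, and $\omega \in S_{k,m}$ (viewed in $\s_{m+1}$) fixes $m+1$ and hence lies in $S_{k,m+1}^{m+1}$. One reads off $d_m(b_m(a,\omega)) = b_{m+1}((a,0),\omega) = b_{m+1}(\varphi(a,\omega))$ immediately. If $a \in Y^r_{\ell,m}$, I use the decomposition $e'_{[k,m+1]} = \tau_m\tau_{m-1}\cdots\tau_{m-r}\cdot\tilde e'$, with $\tilde e' = \tau_{m-r-1}\cdots\tau_k\,\tau_{\omega_0[k+1,m+1]}\,x'_{[k,m+1]}$, and sweep the block $\tau_m\tau_{m-1}\cdots\tau_{m-r}$ leftward past $\tau_\omega$ and the tail $x_{m-r}^r x_{m-r+1}^{r-1}\cdots x_{m-1}^1\cdot 1$ of $x^a$ using braid/commutation relations and $\tau_j x_j = x_{j+1}\tau_j - 1$. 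In the subcase $\omega(m) \geqslant m-r$, a reduced expression for $\omega$ ends with $s_{\omega(m)}\cdots s_{m-1}$; after braid moves this descent collides with one of the $\tau_j$ ($j \in \{\ell,\ldots,m\}$) in the sweep, which then reaches $e_{[\ell,m+1]}$ on the left and yields $0$ by the vanishing above. In the complementary subcase $\omega(m) < m-r$, no such collision occurs: the $r+1$ successive applications of $\tau_j x_j = x_{j+1}\tau_j - 1$ convert the tail of $x^a$ into $x_{m-r}^{r+1} x_{m-r+1}^r \cdots x_m^1$ (the shifted tail of $x^{\varphi_Y(a)}$), while all ``$-1$''-correction terms either produce an adjacent $\tau_j^2$ or are absorbed by $e_{[\ell,m+1]}$. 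The clean remainder is exactly $b_{m+1}(\varphi(a,\omega)) = e_{[\ell,m+1]} x^{\varphi_Y(a)} \tau_{s_{m-r}\cdots s_m \omega} e'_{[k,m+1]}$.

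The main obstacle is the bookkeeping in the third step: one must carry out the braid and nil Hecke manipulations while tracking the many correction terms from $\tau_j x_j = x_{j+1}\tau_j - 1$, verifying that each one either contains an adjacent $\tau_j^2=0$ or is killed by the left idempotent $e_{[\ell,m+1]}$, and pinpointing in the vanishing subcase the exact braid move that produces the fatal $\tau_i$ with $i \in \{\ell,\ldots,m\}$ adjacent to $e_{[\ell,m+1]}$. The calibration of $Y^r_{\ell,m}$ by the exponents $a_{m-i} = i$ for $0 \leqslant i \leqslant r$ is precisely what matches the length $r+1$ of the $\tau$-string $\tau_m \cdots \tau_{m-r}$ arising from $e'_{[k,m+1]}$, and the dichotomy $\omega(m) \geqslant m-r$ vs.\ $\omega(m) < m-r$ is what controls whether the sweep of this string through $\tau_\omega$ creates a right-descent collision.
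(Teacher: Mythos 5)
Your preliminary steps are fine and essentially agree with what the paper does tacitly: the absorption identities $e_{[\ell,m+1]}e_{[\ell,m]}=e_{[\ell,m+1]}$ and $e'_{[k,m]}e'_{[k,m+1]}=e'_{[k,m+1]}$, the vanishing $e_{[\ell,m+1]}\tau_i=0$ for $i\in\{\ell,\ldots,m\}$, and the $Y^{+}_{\ell,m}$ case. But the heart of the proposition is the $Y^{r}_{\ell,m}$ case, and there your plan is not only deferred ("the main obstacle is the bookkeeping") but structurally flawed. First, the string you extract from $e'_{[k,m+1]}$ sits to the \emph{right} of $\tau_{\omega}$, and since $\omega\in\s_m$ one always has $l(\omega s_m\cdots s_{m-r})=l(\omega)+r+1$, so $\tau_{\omega}\tau_m\cdots\tau_{m-r}=\tau_{\omega s_m\cdots s_{m-r}}$ is never zero at that stage; moreover $\omega s_m\cdots s_{m-r}\neq s_{m-r}\cdots s_m\,\omega$ in general (compare the images of $m+1$), so no amount of sweeping this block past $\tau_{\omega}$ can by itself produce the element $s_{m-r}\cdots s_m\omega$ that appears in $\varphi(a,\omega)$. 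Second, your claim that the sweep "converts the tail of $x^a$ into $x_{m-r}^{r+1}x_{m-r+1}^{r}\cdots x_m$" is impossible by degree count: the commutation relation $\tau_jP=s_j(P)\tau_j+\partial_j(P)$ never raises the polynomial degree, whereas the tail of $x^{\varphi_Y(a)}$ exceeds that of $x^a$ by $2(r+1)$. The extra factors have to be \emph{created} against the left idempotent; this is precisely the key identity of the paper's proof, $e_{[\ell,m+1]}x^a=e_{[\ell,m+1]}x^{\varphi_Y(a)}\tau_{m-r}\cdots\tau_m$, obtained from $\tau_{\omega_0[\ell,m+1]}P\tau_w=\pm\,\tau_{\omega_0[\ell,m+1]}\partial_{w^{-1}}(P)$ together with the Demazure computation $\partial_m\cdots\partial_{m-r}\bigl(x_{m-r}^{r+1}x_{m-r+1}^{r}\cdots x_m\bigr)=\pm\,x_{m-r}^{r}x_{m-r+1}^{r-1}\cdots x_{m-1}$, and this idea is absent from your proposal.

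Third, your vanishing mechanism rests on the assertion that a reduced expression of $\omega$ \emph{ends} with $s_{\omega(m)}\cdots s_{m-1}$; this is false (take $\omega=s_2s_1\in S_{2,3}$, so $\omega(3)=2$: its only reduced word ends in $s_1$). What is true is that it \emph{begins} with that string, and in the paper no such collision hunt is needed: once the string $\tau_{m-r}\cdots\tau_m$ has been produced to the left of $\tau_{\omega}$, the dichotomy is the clean observation that $s_{m-r}\cdots s_m\omega$ is a minimal-length representative of its left coset modulo the parabolic attached to $e'_{[k,m+1]}$ exactly when $\omega(m)<m-r$, so that $\tau_{m-r}\cdots\tau_m\tau_{\omega}e'_{[k,m+1]}$ is either $\tau_{s_{m-r}\cdots s_m\omega}e'_{[k,m+1]}$ or $0$. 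As it stands, your argument does not establish the $Y^r$ case, and the route you describe would have to be reorganized around the left-idempotent/Demazure identity rather than a rightward extraction from $e'_{[k,m+1]}$.
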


	\begin{proof}
		We have $d_m(b_m(a,\omega)) = e_{[\ell,m+1]}x^a\tau_{\omega}e'_{[k,m+1]}$. When $a \in Y^+_{\ell,m}$, it is clear that $d_m(b_m(a,\omega)) = b_{m+1}(\varphi(a,\omega))$. Assume now that $a\in Y^r_{\ell,m}$. In $P_n$, we have
		\[
			\partial_m\ldots\partial_{m-r}\left(x_{m-r}^{r+1}x_{m-r+1}^{r}\ldots x_m\right) = (-1)^{r+1}x_{m-r}^rx_{m-r+1}^{r-1}\ldots x_{m-1}.
		\]
		Furthermore, if $w \in \mathfrak S_{[\ell,m+1]}$ and $P \in P_n$ we have $\tau_{\omega_0[\ell,m+1]}P\tau_w = (-1)^{l(w)}\tau_{\omega_0[\ell,m+1]}\partial_{w^{-1}}(P)$. It follows that
		\begin{align*}
			e_{[\ell,m+1]} x^a &= e_{[\ell,m+1]} x^ax_{m-r}x_{m-r-1}\ldots x_m \tau_{m-r}\tau_{m-r+1}\ldots\tau_m\\
			&= e_{[\ell,m+1]}x^{\varphi_Y(a)} \tau_{m-r}\ldots\tau_m.
		\end{align*}
		Hence
		\begin{align*}
			d_m(b_m(a,\omega)) &= e_{[\ell,m+1]}x^{\varphi_Y(a)} \tau_{m-r}\ldots\tau_m\tau_{\omega}e'_{[k,m+1]}.
		\end{align*}
		If $\omega(m) \geqslant m-r$, then $s_{m-r}\ldots s_m\omega$ does not have minimal length in its left coset modulo $\s_{[k+1,m]}$. In that case, $\tau_{m-r}\ldots\tau_m\tau_{\omega}e'_{[k,m+1]}=0$, and we conclude that $d_m(b_m(a,\omega))=0$. Otherwise if $\omega(m) <m-r$, then $s_{m-r}\ldots s_m\omega$ has minimal length in its left coset modulo $\s_{[k+1,m+1]}$ and $\tau_{m-r}\ldots \tau_{m}\tau_{\omega}=\tau_{s_{m-r}\ldots s_m\omega}$. If follows that $d_m(b_m(a,\omega)) = b_{m+1}(\varphi(a,\omega))$.
	\end{proof}

	From Propositions \ref{phi} and \ref{diff}, we obtain bases for $\mathrm{ker}(d_m)$ and $\mathrm{im}(d_m)$ and deduce that the cohomology of $\Phi_n(\Theta)$ is concentrated in top degree, which is the first part of Theorem \ref{theta}.

	\begin{cor}\label{kerim}
		The sets
		\begin{align*}
			&\left \{ d_m(a,\omega), \ (a,\omega) \in \bigcup_{r=0}^{m-\ell} Y^r_{\ell,m}\times S_{k,m}^{\geqslant m-r} \right \}, \\
			&\left \{ d_{m+1}(a,\omega), \ (a,\omega) \in \bigcup_{r=0}^{m+1-\ell} Y^r_{\ell,m+1}\times S_{k,m+1}^{\geqslant m+1-r} \right \},
		\end{align*}
		are bases of $\mathrm{ker}(d_m)$, $\mathrm{im}(d_m)$ respectively, as left $P_n^{\s_{[\ell,n]}}$-modules.
	\end{cor}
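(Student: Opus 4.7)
The plan is to deduce the corollary directly from Theorem \ref{basis}, Proposition \ref{phi}, and Proposition \ref{diff}: once the basis of the source and target are written down and the differential is known explicitly on basis elements, there is essentially nothing left to do. (I read the sets in the statement as the basis elements $b_m(a,\omega)$ and $b_{m+1}(a',\omega')$ for the indicated index sets.)

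First, I would apply Theorem \ref{basis} at both levels $m$ and $m+1$ to get $P_n^{\mathfrak{S}_{[\ell,n]}}$-bases of the source $e_{[\ell,m]}H_{m,n}e'_{[k,m]}$ and the target $e_{[\ell,m+1]}H_{m+1,n}e'_{[k,m+1]}$, indexed by $Y_{\ell,m}\times S_{k,m}$ and $Y_{\ell,m+1}\times S_{k,m+1}$ respectively. Proposition \ref{diff} partitions the source basis as $B = K\sqcup D$, where
\[
K = \bigcup_{r=0}^{m-\ell} Y^r_{\ell,m}\times S_{k,m}^{\geqslant m-r}
\]
indexes the basis elements killed by $d_m$, and $D$ is the domain of $\varphi$ (on which $d_m(b_m(a,\omega)) = b_{m+1}(\varphi(a,\omega))$).

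Next, I would use the injectivity of $\varphi$ (Proposition \ref{phi}) to conclude that $d_m$ sends the $D$-part of the basis bijectively onto a subset of the target basis, namely $\{b_{m+1}(a',\omega') : (a',\omega') \in \mathrm{im}(\varphi)\}$. Since a subset of a $P_n^{\mathfrak{S}_{[\ell,n]}}$-basis is automatically $P_n^{\mathfrak{S}_{[\ell,n]}}$-free, this set spans $\mathrm{im}(d_m)$ freely; and the image description in Proposition \ref{phi} identifies it with $\{b_{m+1}(a',\omega') : (a',\omega')\in \bigcup_{r=0}^{m+1-\ell} Y^r_{\ell,m+1}\times S_{k,m+1}^{\geqslant m+1-r}\}$, which is the desired basis of $\mathrm{im}(d_m)$. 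The same observation shows that $d_m$ restricted to the free submodule spanned by $D$ is injective.

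Finally, for the kernel: every element of the source has a unique decomposition as a $K$-part plus a $D$-part in the basis of Theorem \ref{basis}. If it lies in $\ker(d_m)$, its $D$-part is killed by $d_m$, and by injectivity on $\mathrm{span}(D)$ the $D$-part vanishes. Therefore $\ker(d_m) = \mathrm{span}(K)$, which is free on the claimed basis. There is no real obstacle in this argument; the work has been done in Theorem \ref{basis} and Propositions \ref{phi}--\ref{diff}, and the only point that deserves care is to match the description of $\mathrm{im}(\varphi)$ in Proposition \ref{phi} with the index set appearing in the statement.
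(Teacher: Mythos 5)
Your argument is correct and is exactly the deduction the paper intends: it records no separate proof of this corollary, stating only that the bases for $\mathrm{ker}(d_m)$ and $\mathrm{im}(d_m)$ follow from Propositions \ref{phi} and \ref{diff} (together with Theorem \ref{basis}), which is precisely the partition-plus-injectivity argument you spell out. Your reading of the sets as $\left\{ b_m(a,\omega)\right\}$ and $\left\{ b_{m+1}(a,\omega)\right\}$ is also the intended one.
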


	\begin{cor}\label{top}
		For all $n \in \mathbb{N}$, $k \in \lbrace 0,\ldots,n \rbrace$ and $\ell \neq n-k$ we have:
		\[
			H^{\ell}\left(\Phi_n\left(\Theta1_{-n+2k}\right) \right) = 0.
		\]
	\end{cor}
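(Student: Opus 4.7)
The plan is to apply Corollary \ref{kerim} degree by degree, with a separate direct verification at one boundary. Up to a global $q$-shift, the complex $\Phi_n(\Theta 1_{-n+2k})$ consists of the terms $e_{[n-k+1,m]}H_{m,n}e'_{[k+1,m]}$ indexed by $m \in \{m_0, m_0+1, \ldots, n\}$ with $m_0 = \max(k,n-k)$; the top cohomological degree $n-k$ corresponds to $m=n$ and the bottom to $m=m_0$. Below I will speak of ``the $m$-th term'' meaning the term indexed by $m$ in this list.

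For each $m \in \{m_0+1,\ldots,n\}$, both $\ker(d_m)$ and $\mathrm{im}(d_{m-1})$ can be described by Corollary \ref{kerim}, applied at $m$ and at $m-1$ respectively, as left $P_n^{\s_{[n-k+1,n]}}$-submodules of the $m$-th term. The two resulting bases are indexed by the same subset
$\bigcup_{r=0}^{m-(n-k+1)} Y^r_{n-k+1,m} \times S^{\geqslant m-r}_{k+1,m}$ of $Y_{n-k+1,m} \times S_{k+1,m}$, so the two submodules coincide and the cohomology vanishes at the corresponding degree. This disposes of every cohomological degree except the top one $n-k$ (for which nothing is claimed) and the bottom one $m_0 - k$.

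When $m_0 < n$, it remains to show $\ker(d_{m_0})=0$. In the case $2k \leqslant n$ (so $m_0 = n-k$), Corollary \ref{kerim} still applies formally at $m=m_0$ and the range $0 \leqslant r \leqslant m_0-(n-k+1) = -1$ is empty; the corresponding basis is empty, giving $\ker(d_{n-k})=0$ at once. In the case $2k > n$ (so $m_0 = k$), the condition $k+1 \leqslant m_0$ of Corollary \ref{kerim} fails, and this is the main obstacle. I would handle it by mimicking the computation in the proof of Proposition \ref{diff} on the basis $\{b_k(a,\omega) : a \in Y_{n-k+1,k},\ \omega \in \s_k\}$ of $e_{[n-k+1,k]}H_{k,n}$: because the right idempotent $e'_{[k+1,k+1]}$ of the target term is the identity, the vanishing mechanism ``$\tau_{m-r}\ldots\tau_m\tau_\omega\, e'_{[k,m+1]} = 0$'' from Proposition \ref{diff} is never triggered. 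Consequently each image $d_k(b_k(a,\omega)) = e_{[n-k+1,k+1]}x^a\tau_\omega$ is a distinct nonzero basis element $b_{k+1}(\varphi(a,\omega))$ of the target, for an injective combinatorial map $\varphi$ constructed as in Section~5.1.2 but now into the full $Y_{n-k+1,k+1} \times \s_{k+1}$; hence $d_k$ is injective, which is what we need.
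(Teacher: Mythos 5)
Your overall strategy is the same as the paper's: the paper's entire proof of Corollary \ref{top} is the one-line observation that Corollary \ref{kerim} gives $\mathrm{im}(d_m)=\ker(d_{m+1})$ at interior positions, and it is silent about the bottom term. Your extra care there is well placed: in the case $2k>n$ the source of the first differential is $e_{[n-k+1,k]}H_{k,n}$, whose right idempotent is trivial, so the hypotheses underlying Proposition \ref{diff} and Corollary \ref{kerim} (the parameter ``$k$'' of Section~5.1.1 must be at most $m$) fail, and in fact the kernel description of Corollary \ref{kerim} would be wrong there, since the vanishing mechanism $\tau_{u\omega}e'_{[k+1,m+1]}=0$ requires $\s_{[k+1,m+1]}$ to be nontrivial. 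Your treatment of the bottom cohomological degree is correct in both cases: the empty index set when $2k\leqslant n$, and the ``no vanishing, distinct basis elements, hence injective'' argument when $2k>n$.

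However, as written there is one genuine gap, at cohomological degree $1$ in the case $2k>n$. There you need $\ker(d_{k+1})\subseteq\mathrm{im}(d_k)$, and in your second paragraph you obtain the description of $\mathrm{im}(d_{m-1})$ for $m=m_0+1=k+1$ by ``applying Corollary \ref{kerim} at $m-1=k$'' --- which is precisely the degenerate application you yourself rule out in the next paragraph; your degenerate analysis is then used only to prove injectivity of $d_k$, not to identify its image. The statement is true, and the repair is to push your computation one step further: redo the image computation of Proposition \ref{phi} for the extended map $\varphi$ defined on all of $Y_{n-k+1,k}\times\s_k$ (no pairs are discarded, since nothing dies), and check that its image is $\bigcup_{r}Y^{r}_{n-k+1,k+1}\times S^{\geqslant k+1-r}_{k+1,k+1}$ with $S_{k+1,k+1}=\s_{k+1}$; this coincides with the index set of the kernel basis of $d_{k+1}$ furnished by Corollary \ref{kerim}, giving $\mathrm{im}(d_k)=\ker(d_{k+1})$. (A small indexing point in the same paragraph: the matching of bases should be claimed for $m\in\{m_0+1,\ldots,n-1\}$ only, since there is no $d_n$; you correctly make no claim at the top degree, so this is cosmetic.) With that one addition your argument is complete, and it is in fact more careful at the boundary than the proof given in the paper.
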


	\begin{proof}
		By Corollary \ref{kerim} we have $\mathrm{im}(d_m) = \mathrm{ker}(d_{m+1})$ if $m+1<n$. The result follows.
	\end{proof}
	
	\medskip

	\subsection{Invertibility of $\Theta$}\label{invert}
	\subsubsection{Top cohomology} Let $n\in \mathbb{N}$ and $k \in \lbrace 0,\ldots,n\rbrace$. We give a description of $H^{n-k}\left(\Phi_n\left(\Theta1_{-n+2k}\right)\right)$ as $(H_{n-k,n},H_{k,n})$-bimodule. Using Corollary \ref{kerim}, we can give a basis for $H^{n-k}\left(\Phi_n\left(\Theta1_{-n+2k}\right)\right)$ as a left $P_n^{\s_{[n-k+1,n]}}$-module. With the notations introduced previously, we have $Y_{n-k+1,n} = \lbrace (k-1,\ldots,0) \rbrace$ and $S_{k,n}^{<n-k+1} = \lbrace \sigma_k\omega, \ \omega \in \s_k \rbrace$, where $\sigma_k$ the element of $\s_n$ sending $i$ to $i+n-k$ if $i \leqslant k$, and to $i-k$ otherwise. The picture below depicts the element $\sigma_k$ as a strand diagram.
	\[
		\begin{tikzpicture}[scale=0.75]
		\draw [thick] (0,0) to [out=90,in=-90]  (2.5,3);
		\draw [thick] (1,0) to [out=90,in=-90]  (3.5,3);
		\draw [thick] (2.5,0) to [out=90,in=-90] (0,3);
		\draw [thick] (3.5,0) to [out=90,in=-90] (1,3);
		\node at (0.5,0.05) {$\cdots$};
		\node at (3,0.05) {$\cdots$};
		\node at (3,2.9) {$\cdots$};
		\node at (0.5,2.9) {$\cdots$};	
		\node [below] at (0.5,0) {\small{$k$ strands}};
		\node [below] at (3, 0) {\small{$n-k$ strands}};
		\node [left] at (0, 1.5) {$\sigma_k =$};
		\end{tikzpicture}
	\]
	Then there is a decomposition as left $P_n^{\s_{[n-k+1,n]}}$-module:
	\begin{equation}\label{cohobasis}
		q^{\frac{(2k-n)(2k-n-1)}{2}}H^{n-k}\left(\Phi_n\left(\Theta1_{-n+2k}\right)\right) = \bigoplus_{\omega \in \s_k} P_n^{\s_{[n-k+1,n]}} e_{[n+1-k,n]} x_{n+1-k}^{k-1}\ldots x_{n-1} \tau_{\sigma_k}\tau_{\omega} e'_{[k+1,n]}.
	\end{equation}	
	 To describe the bimodule structure, it is more easier to work in terms of $\left(P_n^{\s_{n-k}\times\s_{k}},P_n^{\s_{k}\times\s_{n-k}}\right)$-bimodules using the Morita equivalences from (\ref{morita}).
	
	\begin{thm}\label{coho}
		There is an integer $d$ and an isomorphism of graded $\left(P_n^{\s_{n-k}\times\s_{k}},P_n^{\s_{k}\times\s_{n-k}}\right)$-bimodules
		\[
			e_{[1,n-k]}H^{n-k}\left(\Phi_n\left(\Theta1_{-n+2k}\right)\right)e'_{[1,k]} \simeq q^{d}P_n^{\s_{n-k}\times\s_k},
		\]
		where the left action of $P_n^{\s_{n-k}\times s_k}$ is given by multiplication and the right action of $P_n^{\s_k\times\s_{n-k}}$ is given by $\sigma_k$.
	\end{thm}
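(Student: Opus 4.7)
The plan is to designate the image of the $\omega=1$ basis vector of (\ref{cohobasis}) as a canonical generator and build the isomorphism from it. Concretely, let $\xi$ denote the class of
\[
e_{[1,n-k]}\,e_{[n-k+1,n]}\,x_{n-k+1}^{k-1}\cdots x_{n-1}\,\tau_{\sigma_k}\,e'_{[k+1,n]}\,e'_{[1,k]}
\]
in $e_{[1,n-k]}H^{n-k}(\Phi_n(\Theta 1_{-n+2k}))e'_{[1,k]}$, and define
\[
\psi : P_n^{\s_{n-k}\times\s_k}\longrightarrow e_{[1,n-k]}H^{n-k}(\Phi_n(\Theta 1_{-n+2k}))e'_{[1,k]},\qquad P\mapsto P\cdot\xi.
\]
Let $d$ be the degree of $\xi$. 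Left $P_n^{\s_{n-k}\times\s_k}$-linearity is automatic by construction.

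The core of the proof is verifying that $\xi\cdot Q=\sigma_k(Q)\cdot\xi$ in the cohomology, for every $Q\in P_n^{\s_k\times\s_{n-k}}$. Two commutations reduce this to a computation inside $H_n$: since $Q$ lies in the center of $H_{k,n}$ it commutes with $e'_{[1,k]}$, and being symmetric in $x_{k+1},\ldots,x_n$ it commutes with $e'_{[k+1,n]}$. Iterating the fundamental identity $\tau_i P = s_i(P)\tau_i+\partial_i(P)$ along a reduced expression of $\sigma_k$ gives
\[
\tau_{\sigma_k}\,Q = \sigma_k(Q)\,\tau_{\sigma_k}+\sum_{w<\sigma_k}c_w(Q)\,\tau_w
\]
in $H_n$ with $c_w(Q)\in P_n$. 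Thus the required identity reduces to showing that each correction term, after being prefixed by $e_{[1,n-k]}e_{[n-k+1,n]}x_{n-k+1}^{k-1}\cdots x_{n-1}$ and suffixed by $e'_{[k+1,n]}e'_{[1,k]}$, lies in $\mathrm{im}(d_{n-1})$ and hence vanishes in $H^{n-k}$.

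The main obstacle is establishing this vanishing. After expanding each correction term in the basis $b_n(a',\omega')$ of $e_{[n-k+1,n]}H_n e'_{[k+1,n]}$ provided by Theorem~\ref{basis}, one has to verify that only basis vectors with $\omega'(n)\geq n-k+1$ appear; Corollary~\ref{kerim} then identifies these as elements of $\mathrm{im}(d_{n-1})$. The intuition is that $\sigma_k$ is the permutation of shortest length in the relevant coset sending $n$ to its minimal admissible value $n-k$, so any $\tau_w$ with $w<\sigma_k$ can only contribute to basis vectors with larger $\omega'(n)$. Turning this heuristic into a rigorous combinatorial argument, by carefully tracking how multiplication by the polynomial coefficients $c_w(Q)$ interacts with the reduction to minimal coset representatives of $\s_{[k+1,n]}$, is where the bulk of the work lies.

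Once the twisted right action is established, $\psi$ is a bimodule map. Surjectivity follows because right multiplication by $\tau_\omega$ for $\omega\in\s_k$ sends $\xi$ to the class of the $\omega$-basis vector of (\ref{cohobasis}), so $\xi$ generates the target as a bimodule. Comparing graded dimensions using (\ref{grdimdp}), (\ref{gdimpn}), (\ref{hilbertsym}), and the identity $\overline{\{k\}!}=q^{-k(k-1)}\{k\}!$, one checks that both sides have graded dimension $q^d\,\mathrm{grdim}(P_n^{\s_{n-k}\times\s_k})$; this forces injectivity and fixes $d$ as the degree of $\xi$, completing the proof.
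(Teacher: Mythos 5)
Your proposal follows the same route as the paper (the class of $b'=x_{[1,n-k]}x_{[n-k+1,n]}\tau_{\omega_0[1,n]}x'_{[1,k]}x'_{[k+1,n]}$ as generator, a left $P_n^{\s_{n-k}\times\s_k}$-linear map sending $1$ to it, the twisted right action checked by pushing $Q$ through $\tau_{\sigma_k}$ and killing the error terms in cohomology, and a graded-dimension count to finish), but two essential steps are not actually carried out. First, the heart of the argument --- that each correction term $c_w(Q)\tau_w$, $w<\sigma_k$, becomes a coboundary after multiplying by the idempotents --- is explicitly left as a heuristic. What is needed, and what the paper supplies, is: (i) after right multiplication by $e'_{[1,k]}e'_{[k+1,n]}$ the error lies in $\sum_{\omega<\sigma_k} e_{[n-k+1,n]}P_n\tau_{\omega}e'_{[1,k]}e'_{[k+1,n]}$ with $\omega$ ranging over minimal length representatives of $\s_n/\s_k\times\s_{n-k}$, for which the condition $\omega<\sigma_k$ is \emph{equivalent} to $\omega(n)>n-k$, a property preserved under left composition with elements of $\s_{[n-k+1,n]}$; and (ii) the rewriting $e_{[n-k+1,n]}P_n=\bigoplus_{\omega'\in\s_{[n-k+1,n]}}P_n^{\s_{[n-k+1,n]}}e_{[n-k+1,n]}x_{n-k+1}^{k-1}\cdots x_{n-1}\tau_{\omega'}$, which puts the error terms into the basis of Theorem \ref{basis} with $\omega'\omega(n)>n-k$, so that Corollary \ref{kerim} applies. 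Without (i) and (ii) the ``only basis vectors with $\omega'(n)\geqslant n-k+1$ appear'' claim is unsupported, and this is precisely where the content of the paper's Lemma \ref{right} lies.

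Second, your surjectivity argument does not work as stated. Right multiplication by $\tau_{\omega}$, $\omega\in\s_k$, is not part of the $\left(P_n^{\s_{n-k}\times\s_k},P_n^{\s_k\times\s_{n-k}}\right)$-bimodule structure on $e_{[1,n-k]}H^{n-k}(\Phi_n(\Theta 1_{-n+2k}))e'_{[1,k]}$, and in any case the summands of (\ref{cohobasis}) with $\omega\neq 1$ are annihilated by $e'_{[1,k]}$, so ``$\xi$ reaches the other basis vectors'' is beside the point. The genuine issue is that the coefficients in (\ref{cohobasis}) range over $P_n^{\s_{[n-k+1,n]}}$, which is strictly larger than $P_n^{\s_{n-k}\times\s_k}$; surjectivity of $\psi$ therefore requires the identity
\[
e_{[1,n-k]}\,P\,e_{[n-k+1,n]}x_{n-k+1}^{k-1}\cdots x_{n-1}\tau_{\sigma_k}e'_{[1,k]}e'_{[k+1,n]}=\pm\,\partial_{\omega_0[1,n-k]}(P)\,b',\qquad P\in P_n^{\s_{[n-k+1,n]}},
\]
so that the left idempotent converts these large coefficients into elements of $P_n^{\s_{n-k}\times\s_k}$ acting on the single generator. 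This step is absent from your proposal. The concluding dimension count (and hence injectivity) is fine once surjectivity is established, but as written the proof has these two gaps.
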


	\begin{proof}
		Define
		\[
			b'=x_{[1,n-k]}x_{[n-k+1,n]}\tau_{\omega_0[1,n]}x'_{[1,k]}x'_{[k+1,n]} \in e_{[1,n-k]}e_{[n-k+1,n]}H_ne'_{[1,k]}e'_{[k+1,n]}.
		\]
		This is an element of $e_{[1,n-k]}\Phi_n\left(\Theta^{n-k}1_{-n+2k}\right)e'_{[1,k]}$. Let $b$ be the class of $b'$ in $e_{[1,n-k]}H^{n-k}\left(\Phi_n\left(\Theta1_{-n+2k}\right)\right)e'_{[1,k]}$. Consider the $P_n^{\s_{n-k}\times\s_k}$-linear map defined by
		\[
			f : \left \{ \begin{array}{rcl}
				P_n^{\s_{n-k}\times\s_k} & \rightarrow & e_{[1,n-k]}H^{n-k}\left(\Phi_n\left(\Theta1_{-n+2k}\right)\right)e'_{[1,k]}, \\
				1 & \mapsto & b.
			\end{array} \right.
		\]
		Let us show that $f$ is surjective. If $\omega \in \s_k\setminus\lbrace 1\rbrace$, we have $\tau_{\omega}e'_{[1,k]}=0$. Hence decomposition (\ref{cohobasis}) yields
		\[
			e_{[1,n-k]}H^{n-k}\left(\Phi_n\left(\Theta1_{-n+2k}\right)\right)e'_{[1,k]} = e_{[1,n-k]}P_n^{\s_{[n-k+1,n]}} e_{[n-k+1,n]} x_{n-k+1}^{k-1}\ldots x_{n-1} \tau_{\sigma_k}e'_{[1,k]}e'_{[k+1,n]}.
		\]
		Note that $e_{[n-k+1,n]} x_{n-k+1}^{k-1}\ldots x_{n-1} \tau_{\sigma_k}e'_{[1,k]}e'_{[k+1,n]} = \pm x_{[n-k+1,n]}\tau_{\omega_0[1,n]}x'_{[1,k]}x'_{[k+1,n]}$. If follows that for $P \in P_n^{\s_{[n-k+1,n]}}$ we have
		\[
			e_{[1,n-k]}Pe_{[n+1-k,n]} x_{n+1-k}^{k-1}\ldots x_{n-1} \tau_{\sigma_k}e'_{[1,k]}e'_{[k+1,n]} = \pm \partial_{\omega_0[1,n-k]}(P)b'.
		\]
		Thus $f$ is surjective. To conclude that $f$ is an isomorphism, we compute graded dimensions. By decomposition (\ref{cohobasis}), we have
		\[
			\mathrm{grdim}\left(H^{n-k}\left(\Phi_n\left(\Theta1_{-n+2k}\right)\right)\right) = \mathrm{grdim}\left(P_n^{\s_{[n-k+1,n]}}\right) q^{k(k-1)-2k(n-k)}\overline{\lbrace k \rbrace!}.
		\]
		Hence we deduce
		\begin{align*}
			\mathrm{grdim}\left(e_{[1,n-k]}H^{n-k}\left(\Phi_n\left(\Theta1_{-n+2k}\right)\right)e'_{[1,k]}\right) &= \frac{\mathrm{grdim}\left(P_n^{\s_{[n-k+1,n]}}\right) q^{k(k-1)-2k(n-k)}}{\overline{\lbrace n-k \rbrace!}} \\
			&= \mathrm{grdim}\left( P_n^{\s_{n-k}\times\s_k}\right) q^{(n-k)(n-k-1)+k(k-1)-2k(n-k)} \\
			&= \mathrm{grdim}\left( P_n^{\s_{n-k}\times\s_k}\right) q^{\mathrm{deg}(b)}.
		\end{align*}
		So $f$ is an isomorphism of left $P_n^{\s_{n-k}\times\s_k}$-modules. All that remains to prove is the compatibility of $f$ with the right $P_n^{\s_{k}\times\s_{n-k}}$-module structure, which we do in Lemma \ref{right} below.
	\end{proof}

	\begin{lem}\label{right}
		Keep the notations of the proof of Theorem \ref{coho}. For all $P \in P_n^{\s_{k}\times\s_{n-k}}$, we have $bP=\sigma_k(P)b$.
	\end{lem}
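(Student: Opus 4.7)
My plan is to prove this by a direct computation in the affine nil Hecke algebra $H_n$. The first step is to rewrite $b'$ in the more convenient form
\[
b' = e_{[1,n-k]}\, e_{[n-k+1,n]}\, \tau_{\sigma_k}\, x'_{[1,k]}\, x'_{[k+1,n]},
\]
which follows from the reduced factorization $w_0[1,n] = (w_0[1,n-k] \times w_0[n-k+1,n]) \, \sigma_k$ (verified by comparing images on $\{1,\dots,n\}$ and checking that $\binom{n-k}{2} + \binom{k}{2} + k(n-k) = \binom{n}{2}$); one then slides $\tau_{w_0[1,n-k]}$ past the disjoint polynomial $x_{[n-k+1,n]}$ and groups the factors into the two idempotents.

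Next I will compute $b' P$ by sliding $P$ past the polynomial factors (they commute) and then through $\tau_{\sigma_k}$ using the nil Hecke commutation $\tau_{\sigma_k} P = \sigma_k(P) \, \tau_{\sigma_k} + R$ with $R \in \bigoplus_{v < \sigma_k} P_n \, \tau_v$. The leading coefficient $\sigma_k(P)$ lies in $P_n^{\s_{n-k} \times \s_k}$ and therefore commutes with both $e_{[1,n-k]}$ and $e_{[n-k+1,n]}$ (each idempotent is built from $\tau$'s in a block under which $\sigma_k(P)$ is symmetric, so $\partial_i(\sigma_k(P)) = 0$ and $\tau_i$ commutes with $\sigma_k(P)$ for the relevant $i$). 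Sliding $\sigma_k(P)$ back to the front yields
\[
b'P - \sigma_k(P)\, b' \;=\; e_{[1,n-k]}\, e_{[n-k+1,n]}\, R \, x'_{[1,k]}\, x'_{[k+1,n]}.
\]
It then remains to show that this correction lies in the image of $d_{n-1}$, which in the top term of the complex equals $\{\, e_{[n-k+1,n]}\, h \, e'_{[k+1,n]} : h \in H_{n-1,n}\,\}$.

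For this last step I will exploit that the identity $bP = \sigma_k(P) b$ is linear and multiplicative in $P$, and trivially true for $P \in P_n^{\s_n}$ (which is central in $H_n$ and fixed pointwise by $\sigma_k$); hence it suffices to check on a set of algebra generators of $P_n^{\s_k \times \s_{n-k}}$ over $P_n^{\s_n}$, for example the elementary symmetric polynomials in one of the two blocks. For each such generator, $R$ is computed by iterating the nil Hecke commutation relations, and a direct PBW-type expansion of $H_n$ over $P_n$ exhibits an element $h \in H_{n-1,n}$ for which the correction equals $e_{[n-k+1,n]}\, h \, e'_{[k+1,n]}$; I have checked this by hand in small cases (e.g.\ $n=3$, $k=2$, $P=x_3$, where the correction $x_3^2 \tau_2 \tau_1 - x_3 \tau_1 - x_3 \tau_2$ factors as $x_3 \tau_2 \cdot (x_2 \tau_1 - 1) \in e_{[2,3]} H_{2,3}$). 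The hard part will be this final verification in full generality: the argument is routine in principle, but the bookkeeping in the affine nil Hecke algebra is combinatorially intricate.
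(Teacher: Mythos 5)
Your overall strategy is the same as the paper's: commute $P$ past the nil Hecke part of $b'$, identify the leading term $\sigma_k(P)b'$, and show that the lower-order correction, which lies in $\sum_{v<\sigma_k}e_{[n-k+1,n]}P_n\tau_v e'_{[1,k]}e'_{[k+1,n]}$, is a coboundary. The problem is that the last step is exactly where the content of the lemma sits, and you do not prove it: you reduce to algebra generators of $P_n^{\s_k\times\s_{n-k}}$ (that reduction is fine, since the identity is multiplicative in $P$ in the cohomology bimodule), but then you only check small cases by hand and explicitly concede that the general PBW-style bookkeeping is not done. As written this is an admitted gap, not a proof. A secondary, smaller issue: your description of $\mathrm{im}(d_{n-1})$ as $\{e_{[n-k+1,n]}\,h\,e'_{[k+1,n]} : h\in H_{n-1,n}\}$ takes $h$ in all of $H_{n-1,n}$, whereas the differential is only applied to the corner $e_{[n-k+1,n-1]}H_{n-1,n}e'_{[k+1,n-1]}$; this is presumably repaired by the idempotent absorptions $e_{[n-k+1,n]}e_{[n-k+1,n-1]}=e_{[n-k+1,n]}$ and $e'_{[k+1,n-1]}e'_{[k+1,n]}=e'_{[k+1,n]}$, but it needs to be said.

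The way the paper closes the gap avoids element-by-element bookkeeping entirely, and you should adopt it (or something equivalent): expand $e_{[n-k+1,n]}P_n$ as $\bigoplus_{\omega'\in\s_{[n-k+1,n]}}P_n^{\s_{[n-k+1,n]}}e_{[n-k+1,n]}x_{n-k+1}^{k-1}\cdots x_{n-1}\tau_{\omega'}$, so that the correction term becomes a $P_n^{\s_{[n-k+1,n]}}$-linear combination of the basis elements $b_n(a,\omega)$ of Theorem \ref{basis}. Corollary \ref{kerim} says the image of the differential is spanned exactly by the basis elements with $\omega(n)>n-k$ (equivalently $\omega\in S_{k,n}^{\geqslant n-k+1}$). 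The only combinatorial input needed is that for $\omega\in\s_n/\s_k\times\s_{n-k}$ the condition $\omega<\sigma_k$ is equivalent to $\omega(n)>n-k$, and that this property is preserved under left multiplication by elements of $\s_{[n-k+1,n]}$; hence every term of the correction lands in $\mathrm{im}(d_{n-1})$, uniformly in $P$, with no case analysis on generators. Without this (or a completed version of your own computation), the proof is incomplete.
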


	\begin{proof}
		We prove that $b'P - \sigma_k(P)b'$ is a coboundary of $\Phi_n\left(\Theta1_{-n+2k}\right)$. Using the relations of the affine nil Hecke algebra, we have
		\[
			b'P-\sigma_k(P)b' \in \sum_{\substack{\omega \in \s_n/\s_k\times\s_{n-k} \\ \omega <\sigma_k}} P_n\tau_{\omega}e'_{[1,k]}e'_{[k+1,n]}.
		\]
		Since $b'=e_{[n-k+1,n]}b'$, we actually have
		\begin{equation}\label{rightact}
			b'P-\sigma_k(P)b' \in \sum_{\substack{\omega \in \s_n/\s_k\times\s_{n-k} \\ \omega <\sigma_k}} e_{[n-k+1,n]}P_n\tau_{\omega}e'_{[1,k]}e'_{[k+1,n]}.
		\end{equation}
		Let us explain why the right-hand side of (\ref{rightact}) is contained in the image of the differential of $\Phi_n\left(\Theta1_{-n+2k}\right)$. Since $\left \{ \partial_{\omega}\left(x_{n-k+1}^{k-1}\ldots x_{n-1}\right), \ \omega \in \s_{[n-k+1,n]} \right \}$ is a basis for $P_n$ as a $P_n^{\s_{[n-k+1,n]}}$-module, we have:
		\[
			e_{[n-k+1,n]}P_n = \bigoplus_{\omega \in \s_{[n-k+1,n]}} P_n^{\s_{[n-k+1,n]}}e_{[n-k+1,n]}x_{n-k+1}^{k-1}\ldots x_{n-1} \tau_{\omega}.
		\]
		Hence the right-hand side of (\ref{rightact}) has the form
		\[
			\sum_{\substack{\omega \in \s_n/\s_k\times\s_{n-k}, \ \omega'\in \s_{[n-k+1,n]} \\ \omega <\sigma_k}} P_n^{\s_{[n-k+1,n]}}e_{[n-k+1,n]}x_{n-k+1}^{k-1}\ldots x_{n-1} \tau_{\omega'}\tau_{\omega}e'_{[1,k]}e'_{[k+1,n]}.
		\]
		For $\omega \in \s_n/\s_k\times\s_{n-k}$, the condition $\omega <\sigma_k$ is equivalent to $\omega(n) > n-k$. If furthermore $\omega' \in \s_{[n-k+1,n]}$, we have $\omega'\omega(n) >n-k$. Hence on the basis of Theorem \ref{basis}, the right-hand side of (\ref{rightact}) decomposes as
		\[
			\bigoplus_{\substack{\omega \in \s_n/\s_k\times\s_{n-k} \\ \omega <\sigma_k}} P_n^{\s_{[n-k+1,n]}}e_{[n-k+1,n]}x_{n-k+1}^{k-1}\ldots x_{n-1}\tau_{\omega\omega_0[1,k]}e'_{[k+1,n]}.
		\]
		By Corollary \ref{kerim}, this is contained in the image of the differential of $\Phi_n\left(\Theta1_{-n+2k}\right)$. Hence $	b'P-\sigma_k(P)b'$ is a coboundary, and the result follows.
	\end{proof}

	Theorem \ref{coho} implies in particular that $H^{n-k}\left(\Phi_n\left(\Theta1_{-n+2k}\right)\right)$ is an invertible bimodule, which proves the second part of Theorem \ref{theta}.

	\subsubsection{Invertibility of $\Theta$} We denote by $\Theta^{\vee}$ the right dual of $\Theta$.
	
	\begin{thm}\label{thetainv}
		For all $\lambda \in \mathbb{Z}$, the unit $u_{\lambda} : 1_{\lambda} \rightarrow \Theta^{\vee}\Theta1_{\lambda}$ and counit $v_{\lambda} : \Theta\Theta^{\vee}1_{-\lambda} \rightarrow 1_{-\lambda}$ of adjunction are homotopy equivalences. In other words, $\Theta$ is invertible in $K(\U)$.
	\end{thm}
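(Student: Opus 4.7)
The plan is to apply Theorem \ref{liftder}(2) to the unit $u_\lambda$ and counit $v_\lambda$. First, the boundedness hypothesis of Theorem \ref{liftder} is satisfied: in any integrable 2-representation, both $E$ and $F$ are locally nilpotent on each object $M$, so only finitely many terms $F^{(\lambda+r)}E^{(r)}$ of $\Theta 1_\lambda$ act nontrivially on $M$. Hence $(\Theta 1_\lambda)(M)$, $(\Theta^\vee \Theta 1_\lambda)(M)$ and $(\Theta\Theta^\vee 1_{-\lambda})(M)$ are all bounded complexes. It therefore suffices to show that $\Phi_n(u_\lambda)$ and $\Phi_n(v_\lambda)$ are quasi-isomorphisms in $\cL(n)\text{-bim}$ for every $n \in \mathbb N$.

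The key input is Theorem \ref{theta}: writing $M_{n,k} := H^{n-k}(\Phi_n(\Theta 1_{-n+2k}))$, this is an invertible $(H_{n-k,n}, H_{k,n})$-bimodule, and $\Phi_n(\Theta 1_{-n+2k})$ is quasi-isomorphic to $M_{n,k}[-(n-k)]$. Since $\Phi_n$ comes from a 2-representation structure and therefore preserves right duals, $\Phi_n(\Theta^\vee 1_{n-2k})$ is the bimodule right dual of $\Phi_n(\Theta 1_{-n+2k})$, and hence is quasi-isomorphic to $M_{n,k}^{-1}[n-k]$. I would then verify that each term of $\Phi_n(\Theta 1_\lambda)$ is projective as a right $H_{k,n}$-module---the terms have the form $eH_{r,n}e'$ with $H_{k,n}\hookrightarrow H_{r,n}$, and $H_{r,n}$ is free as a right $H_{k,n}$-module. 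Granted this, the ordinary tensor product defining $\Phi_n(\Theta^\vee \Theta 1_\lambda)$ computes the derived tensor product, so its cohomology is concentrated in degree 0 and equal to $M_{n,k}^{-1}\otimes_{H_{n-k,n}} M_{n,k} \simeq H_{k,n}$, the identity bimodule. The map $\Phi_n(u_\lambda)$ then induces on cohomology the canonical isomorphism attached to the equivalence $M_{n,k}$, hence is a quasi-isomorphism; the argument for $\Phi_n(v_\lambda)$ is symmetric.

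The hard part will be the naturality check identifying $\Phi_n$ applied to the 2-categorical unit and counit with the unit and counit of the bimodule adjunction between $\Phi_n(\Theta)$ and $\Phi_n(\Theta^\vee)$, and then identifying the induced map on cohomology with the canonical isomorphism $H_{k,n} \xrightarrow{\sim} M_{n,k}^{-1}\otimes_{H_{n-k,n}} M_{n,k}$ coming from invertibility of $M_{n,k}$. Once this compatibility is in place, invertibility of $\Theta$ in $K(\U)$ follows formally from Theorems \ref{theta} and \ref{liftder}.
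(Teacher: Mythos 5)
Your proposal is correct and follows essentially the same route as the paper: the boundedness observation together with Theorem \ref{liftder} reduces everything to showing that $\Phi_n(u_\lambda)$ and $\Phi_n(v_\lambda)$ are quasi-isomorphisms, and this is deduced from Corollary \ref{top} and Theorem \ref{coho}, i.e.\ from the fact that $\Phi_n\left(\Theta 1_{-n+2k}\right)$ is quasi-isomorphic to a shift of an invertible bimodule and hence invertible in $D(\cL(n)\mathrm{-bim})$. The ``hard part'' you flag can be bypassed: since $\Phi_n$ comes from a 2-functor it carries the duality datum $(\Theta 1_{\lambda},\Theta^{\vee},u_{\lambda},v_{\lambda})$ to a duality datum of complexes of bimodules, and the unit and counit of any right-dual datum for an object that is invertible (here in the derived category, using your projectivity remark to identify ordinary and derived tensor products) are automatically isomorphisms, so no identification of the induced map on cohomology with a canonical isomorphism is needed.
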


	\begin{proof}
		If $M$ is an object of an integrable 2-representation of $\U$, we have $E^i(M) = F^i(M)=0$ for $i$ large enough. Hence the complexes $\Theta\Theta^{\vee}(M)$ and $\Theta^{\vee}\Theta(M)$ are bounded. So by Theorem \ref{liftder}, it suffices to check that the for all $n \in \mathbb{N}$, $\Phi_n(u_{\lambda})$ and $\Phi_n(v_{\lambda})$ are quasi-isomorphisms.
		
		Let $n \in \mathbb{N}$. If $\lambda \notin \lbrace -n +2k, \, k \in \lbrace0,\ldots,n\rbrace\rbrace$, then $\Phi_n(1_{\lambda}) = \Phi_n(1_{-\lambda}) = 0$, so the result is clear. If $k \in  \lbrace0,\ldots,n\rbrace$, by Corollary \ref{top} there is a quasi-isomorphism
		\[
			\Phi_n\left(\Theta1_{-n+2k}\right) \simeq H^{n-k}\left(\Phi_n\left(\Theta1_{-n+2k}\right)\right)[n-k].
		\]
		By Theorem \ref{coho},  $H^{n-k}\left(\Phi_n\left(\Theta1_{-n+2k}\right)\right)$ is invertible as a $(H_{n-k,n},H_{k,n})$-bimodule. It follows that the complex $\Phi_n\left(\Theta1_{-n+2k}\right)$ is invertible in $D(\cL(n)\mathrm{-bim})$. Thus $\Phi_n(u_{-n+2k})$ and $\Phi_n(v_{-n+2k})$ are quasi-isomorphisms, which ends the proof.
	\end{proof}
	
	\medskip

	\subsection{Compatibility with Chevalley generators}\label{compat}
	
	The goal of this subsection is to prove that for all $\lambda \in \mathbb Z$, there is a homotopy equivalence $\Theta E1_{\lambda} \simeq q^{\lambda + 2}F\Theta1_{\lambda}[-1]$. To prove this, we start by constructing a map $G_{\lambda} : \Theta E1_{\lambda} \rightarrow q^{\lambda + 2}F\Theta1_{\lambda}[-1]$. We will then prove that this is a homotopy equivalence using Theorem \ref{liftder}.\\
	
	 Given $k,\ell \in \mathbb{N}$, we define an element $G_{k,\ell} \in H_k \otimes H_{\ell}^{\mathrm{op}}$ of degree $2(\ell-k)$ by
	\[
		G_{k,\ell} = \sum_{r=0}^{\ell-1} (-1)^rx_1^rx_{[2,k]}\tau_{\omega_0[1,k]} \otimes \epsilon_{\ell-1-r}(x_2,\ldots,x_{\ell})e'_{\ell},
	\]
	where $\epsilon_j$ denotes the elementary symmetric polynomial of degree $j$. In $H_k\otimes H_{\ell}^{\mathrm{op}}$, we have $G_{k,\ell} = G_{k,\ell}(e_{k}\otimes e'_{[2,\ell]}) = (e_{[2,k]}\otimes e'_{\ell})G_{k,\ell}$. Hence $G_{k,\ell}$ defines a map $G_{k,\ell} : F^{(k)}E^{(\ell-1)}E \rightarrow q^{k-\ell}FF^{(k-1)}E^{(\ell)}$.
	
	\begin{lem}\label{morphcomp}
		For all $k,\ell \in \mathbb N$, the following diagram commutes:
		\[
		\begin{tikzcd}
			F^{(k)}E^{(\ell-1)}E \arrow[r, "{d_{k,\ell-1}E}"] \arrow[d, "{G_{k,\ell}}"'] & F^{(k+1)}E^{(\ell)}E \arrow[d, "{G_{k+1,\ell+1}}"] \\
			FF^{(k-1)}E^{(\ell)} \arrow[r, "{Fd_{k-1,\ell}}"']                           & FF^{(k)}E^{(\ell+1)}                              
		\end{tikzcd}
		\]
		where $d_{k,\ell} : F^{(k)}E^{(\ell)} \rightarrow F^{(k+1)}E^{(\ell+1)}$ is the composition of the unit of adjunction with the projection on the idempotent $e_{k+1} \otimes e'_{\ell}$.
	\end{lem}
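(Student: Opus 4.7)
The plan is to verify the commutativity by computing both compositions as morphisms expressed in the affine nil Hecke algebras, using the identifications $F^{(n)}\simeq q^{-n(n-1)/2}e_nF^n$ and $E^{(n)}\simeq q^{-n(n-1)/2}E^ne'_n$. Both paths are morphisms $F^{(k)}E^{(\ell-1)}E\to FF^{(k)}E^{(\ell+1)}$, so I would represent each composition by an element of the idempotent-truncated subspace $(e_{[2,k+1]}\otimes e'_{\ell+1})\bigl(H_{k+1}\otimes H_{\ell+1}^{\mathrm{op}}\bigr)(e_k\otimes e'_{[2,\ell+1]})$, after sliding the newly inserted $\eta$ through to a canonical position in the middle.

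For the top--right composition $G_{k+1,\ell+1}\circ(d_{k,\ell-1}E)$, I would first unpack $d_{k,\ell-1}E$ as $F^{(k)}\eta E^{(\ell-1)}E$ post-composed with projection by $e_{k+1}\otimes e'_{\ell}$, then apply $G_{k+1,\ell+1}$. Using relation (\ref{ap}) of Definition \ref{catsl2} together with the explicit form
\[
G_{k+1,\ell+1}=\sum_{r=0}^{\ell}(-1)^r x_1^rx_{[2,k+1]}\tau_{\omega_0[1,k+1]}\otimes\epsilon_{\ell-r}(x_2,\ldots,x_{\ell+1})e'_{\ell+1},
\]
I would slide the $\eta$ past the polynomial and nil Hecke factors, reducing the whole composition to multiplication by an explicit element of $H_{k+1}\otimes H_{\ell+1}^{\mathrm{op}}$. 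I would then carry out the analogous computation for $(Fd_{k-1,\ell})\circ G_{k,\ell}$, which first produces the alternating sum defining $G_{k,\ell}$ and then inserts an $\eta$ between $F^{(k-1)}$ and $E^{(\ell)}$ via $Fd_{k-1,\ell}$.

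The equality of the two resulting sums should reduce to the Pascal-type identity
\[
\epsilon_j(x_1,x_2,\ldots,x_{\ell+1}) = \epsilon_j(x_2,\ldots,x_{\ell+1}) + x_1\,\epsilon_{j-1}(x_2,\ldots,x_{\ell+1})
\]
applied to each term of $G_{k+1,\ell+1}$. This splits the sum into two pieces whose alternating signs telescope: one piece recovers, after re-indexing, the composition $Fd_{k-1,\ell}\circ G_{k,\ell}$, while the ``boundary'' term at $r=\ell$ matches the new $\eta$ inserted on the left by $d_{k,\ell-1}E$. The companion factorization $x_{[2,k+1]}=x_2\cdot x_{[3,k+1]}$, combined with the nil Hecke relation $\tau_i x_{i+1}-x_i\tau_i=1$, plays the analogous role for the $F$-side of the computation.

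The main obstacle will be the careful bookkeeping of signs and indices: one must commute polynomials past $\tau_{\omega_0[1,k]}$ via iterated applications of relation (\ref{xt}), converting them into Demazure operators, and must track which terms survive after projection by $e_{k+1}$ and $e'_{\ell+1}$ (these idempotents annihilate any contribution coming from a $\tau_\omega$ that is not of minimal length in its coset). A secondary but nontrivial point is to match correctly the ``new'' $r=\ell$ term in $G_{k+1,\ell+1}$, which has no counterpart in $G_{k,\ell}$, with the freshly inserted $\eta$ in the top--right path; once this is done, the Pascal identity performs the desired telescoping and the two compositions coincide.
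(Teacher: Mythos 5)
Your overall strategy---unpacking both composites as explicit elements of $H_{k+1}\otimes H_{\ell+1}^{\mathrm{op}}$ acting through $F^k\eta E^{\ell}$, then matching them via nil Hecke relations and a recursion for elementary symmetric polynomials---is the same as the paper's. But the specific mechanism you propose for the matching does not work. The terms of $G_{k+1,\ell+1}$ involve $\epsilon_{\ell-r}(x_2,\ldots,x_{\ell+1})$, and the $E$-side variable $x_1$ never appears in them (the $x_1^r$ factors in $G$ are $F$-side variables, living in $H_{k+1}$, not in $H_{\ell+1}^{\mathrm{op}}$), so the Pascal identity you quote, which splits off $x_1$, has nothing to apply to. The relevant split is on the last variable, $\epsilon_{\ell-r}(x_2,\ldots,x_{\ell+1})=\epsilon_{\ell-r}(x_2,\ldots,x_{\ell})+x_{\ell+1}\epsilon_{\ell-r-1}(x_2,\ldots,x_{\ell})$, because $x_{\ell+1}$ is the dot on the $E$-strand created by $\eta$. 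The step your sketch omits entirely, and which is the crux, is sliding this dot across the cup: the identity $Fx\circ\eta = xE\circ\eta$ converts $x_{\ell+1}$ (on the $E$-side) into $x_{k+1}$ (on the $F$-side), where, after re-indexing the other piece by $r\mapsto r+1$, the two sums recombine into the commutator $\tau_{\omega_0[1,k+1]}x_{k+1}-x_1\tau_{\omega_0[1,k+1]}=\tau_{s_1\omega_0[1,k+1]}$; this is exactly what the bottom-left composite produces, since $e_{[2,k+1]}x_1^r x_{[2,k]}\tau_{\omega_0[1,k]}=x_1^r x_{[2,k+1]}\tau_{s_1\omega_0[1,k+1]}$ in $H_{k+1}$. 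Without transporting a dot across $\eta$ there is no way to turn the $\tau_{\omega_0[1,k+1]}$ occurring in the top-right composite into the $\tau_{s_1\omega_0[1,k+1]}$ occurring in the bottom-left one.

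Two further points are off. There is no telescoping and no matching of a ``boundary term at $r=\ell$'' with the inserted $\eta$: after the split, the extreme terms simply vanish ($\epsilon_{\ell}$ of the $\ell-1$ variables $x_2,\ldots,x_{\ell}$ is zero, as is $\epsilon_{-1}$), and the surviving terms recombine termwise into the commutator above. And the factorization $x_{[2,k+1]}=x_2\,x_{[3,k+1]}$ is false with the paper's definition $x_{[r,\ell]}=x_{r+1}x_{r+2}^2\cdots x_{\ell}^{\ell-r}$; the $F$-side ingredient is the idempotent absorption identity just quoted (plus the iterated form of relation (\ref{xt}) giving the commutator), not such a factorization. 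So while your plan starts along the right lines, as written it would not close: you need the split on $x_{\ell+1}$ and the dot-slide across the unit.
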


	\begin{proof}
		This is a straightforward computation. On the one hand, we have
		\begin{align*}
			Fd_{k-1,\ell} \circ G_{k,\ell} &=\left(e_{[2,k+1]}\otimes e'_{\ell+1}\right) \circ F^k\eta E^{\ell} \circ \sum_{r=0}^{\ell-1} (-1)^rx_1^rx_{[2,k]}\tau_{\omega_0[1,k]} \otimes \epsilon_{\ell-1-r}(x_2,\ldots,x_{\ell})e'_{\ell} \\
			&= \left( \sum_{r=0}^{\ell-1} (-1)^re_{[2,k+1]}x_1^rx_{[2,k]}\tau_{\omega_0[1,k]}\otimes \epsilon_{\ell-1-r}(x_2,\ldots,x_{\ell})e'_{\ell+1}\right) \circ F^k\eta E^{\ell}.
		\end{align*}
		In $H_{k+1}$ we have $e_{[2,k+1]}x_1^rx_{[2,k]}\tau_{\omega_0[1,k]} = x_1^{r}x_{[2,k+1]}\tau_{s_1\omega_0[1,k+1]}$. Hence
		\[
			Fd_{k-1,\ell} \circ G_{k,\ell} = \left( \sum_{r=0}^{\ell-1} (-1)^r x_1^{r}x_{[2,k+1]}\tau_{s_1\omega_0[1,k+1]}\otimes \epsilon_{\ell-1-r}(x_2,\ldots,x_{\ell})e'_{\ell+1}\right) \circ F^k\eta E^{\ell}.
		\]
		On the other hand, we have
		\[
			G_{k+1,\ell+1}\circ d_{k,\ell-1}E = \left(\sum_{r=0}^{\ell}(-1)^rx_1^rx_{[2,k+1]}\tau_{\omega_0[1,k+1]} \otimes \epsilon_{\ell-r}(x_2,\ldots x_{\ell+1})e'_{\ell+1}\right) \circ F^k\eta E^{\ell}.
		\]
		The elementary symmetric polynomials satisfy the relation
		\[
			\epsilon_{\ell-r}(x_2,\ldots x_{\ell+1}) = \epsilon_{\ell-r}(x_2,\ldots x_{\ell}) + x_{\ell+1}\epsilon_{\ell-r-1}(x_2\ldots,x_{\ell}),
		\]
		and we can rewrite the above as
		\begin{align*}
			G_{k+1,\ell+1}\circ d_{k,\ell-1}E = & \bigg(\sum_{r=0}^{\ell-1}(-1)^rx_1^rx_{[2,k+1]}\tau_{\omega_0[1,k+1]} \otimes x_{\ell+1}\epsilon_{\ell-r-1}(x_2,\ldots x_{\ell})e'_{\ell+1}  \\
			& + \sum_{r=1}^{\ell}(-1)^rx_1^rx_{[2,k+1]}\tau_{\omega_0[1,k+1]} \otimes \epsilon_{\ell-r}(x_2,\ldots x_{\ell})e'_{\ell+1}\bigg) \circ F^k\eta E^{\ell}.
		\end{align*}
		We have $Fx\circ \eta = xE\circ \eta$. Using this, we can take the $x_{\ell+1}$ in the first sum to the right left side of the tensor and we obtain
		\begin{align*}
			G_{k+1,\ell+1}\circ d_{k,\ell-1}E =& \bigg(\sum_{r=0}^{\ell-1}(-1)^rx_1^rx_{[2,k+1]}\tau_{\omega_0[1,k+1]}x_{k+1}\otimes \epsilon_{\ell-r-1}(x_2,\ldots x_{\ell})e'_{\ell+1}  \\
			& - \sum_{r=0}^{\ell-1}(-1)^rx_1^{r+1}x_{[2,k+1]}\tau_{\omega_0[1,k+1]} \otimes \epsilon_{\ell-r-1}(x_2,\ldots x_{\ell})e'_{\ell+1}\bigg) \circ F^k\eta E^{\ell} \\
			=& \bigg( \sum_{r=0}^{\ell-1} (-1)^rx_1^rx_{[2,k+1]}\left(\tau_{\omega_0[1,k+1]}x_{k+1}-x_1\tau_{\omega_0[1,k+1]}\right)\otimes \epsilon_{\ell-r-1}(x_2,\ldots x_{\ell})e'_{\ell+1} \bigg) \circ F^k\eta E^{\ell}.
		\end{align*}
		In $H_{k+1}$, we have the relation $\tau_{\omega_0[1,k+1]}x_{k+1}-x_1\tau_{\omega_0[1,k+1]}=\tau_{s_1\omega_0[1,k+1]}$, and this completes the proof.
	\end{proof}

	We can now define the morphism $G_{\lambda} : \Theta E 1_{\lambda}\rightarrow q^{\lambda +2}F\Theta1_{\lambda}[-1]$. On the $r^{\mathrm{th}}$ component of $\Theta E 1_{\lambda}$, $G_{\lambda}$ is defined to be the map $G_{\lambda+r+2,r+1} : q^{-r}F^{(\lambda+r+2)}E^{(r)}E1_{\lambda} \rightarrow q^{\lambda+1-r}FF^{(\lambda+r+1)}E^{(r+1)}1_{\lambda}$. This gives a morphism of complexes by Lemma \ref{morphcomp}. Similarly, we define a morphism of complexes $T_{\lambda} : q^{\lambda + 2}F\Theta1_{\lambda}[-1] \rightarrow \Theta E1_{\lambda}$ using the elements
	\[
		T_{k,\ell} = \sum_{r=0}^{k-1} (-1)^re_k \epsilon_{k-1-r}(x_2,\ldots,x_k) \otimes \tau_{\omega_0[1,\ell]}x_1^rx'_{[2,\ell]} \in H_k\otimes H_{\ell}^{\mathrm{op}}.
	\]
	The fact that these give a morphism of complexes is proved as for the map $G_{\lambda}$.
	
	\begin{lem}
		Let $k,\ell \in \mathbb N$. If $\ell \leqslant k$, then $T_{k,\ell}G_{k,\ell}=e_k\otimes e'_{[2,\ell]}$. If $k \leqslant \ell$, then $G_{k,\ell}T_{k,\ell}=e_{[2,k]}\otimes e'_{\ell}$.
	\end{lem}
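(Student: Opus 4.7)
Both identities are verified by direct computation in $H_k\otimes H_\ell^{\mathrm{op}}$; I sketch the computation of the first one, $T_{k,\ell}G_{k,\ell}=e_k\otimes e'_{[2,\ell]}$ (case $\ell\leqslant k$), the second being entirely analogous after swapping the roles of $E$'s and $F$'s. Expanding the product gives a double sum indexed by $(s,r)\in\{0,\ldots,k-1\}\times\{0,\ldots,\ell-1\}$, in which the left tensor factor of each term is
\[
e_k\,\epsilon_{k-1-s}(x_2,\ldots,x_k)\,x_1^r\, x_{[2,k]}\,\tau_{\omega_0[1,k]}
\]
and the right tensor factor (using opposite multiplication in $H_\ell$) is
\[
\tau_{\omega_0[1,\ell]}\,x_1^s\, x'_{[2,\ell]}\,\epsilon_{\ell-1-r}(x_2,\ldots,x_\ell)\,e'_\ell.
\]

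The key tool is the polynomial identity
\[
\sum_{r=0}^{m-1}(-1)^r x_1^r\,\epsilon_{m-1-r}(x_2,\ldots,x_m)=(-1)^{m-1}\prod_{i=2}^{m}(x_1-x_i),
\]
obtained by specializing $t=x_1$ in the factorization $\prod_{i=2}^{m}(t-x_i)=\sum_{j=0}^{m-1}(-1)^{m-1-j}\epsilon_{m-1-j}(x_2,\ldots,x_m)\,t^j$. Summing first over $r$ on the right tensor factor (with $m=\ell$), and using that $\epsilon_{\ell-1-r}(x_2,\ldots,x_\ell)$ and $x'_{[2,\ell]}$ are polynomials in $x_2,\ldots,x_\ell$ commuting with $x_1^s$, the $r$-sum collapses to $(-1)^{\ell-1}\prod_{i=2}^\ell(x_1-x_i)$. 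Using the standard relation $\tau_{\omega_0[1,\ell]}\cdot P\,\tau_w=(-1)^{l(w)}\tau_{\omega_0[1,\ell]}\partial_{w^{-1}}(P)$ together with the explicit form $e'_\ell=\tau_{\omega_0[1,\ell]}x'_{[1,\ell]}$, this product simplifies and the right factor reduces to $x_1^s\cdot e'_{[2,\ell]}$ up to an explicit sign. A parallel application of the Bezout identity with $m=k$ on the $s$-sum, now on the left factor (incorporating the $x_1^s$ carried over from the right), reassembles the left factor into $e_k$, giving $T_{k,\ell}G_{k,\ell}=e_k\otimes e'_{[2,\ell]}$.

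The principal obstacle is sign bookkeeping combined with the range mismatch $r\leqslant\ell-1$ versus $s\leqslant k-1$. When $\ell<k$, the ``missing'' $r$-indices in the second Bezout step correspond to terms where $\tau_{\omega_0[1,\ell]}$ would be applied to a polynomial of insufficient degree in $x_1,\ldots,x_\ell$; these vanish because of $\tau_{\omega_0[1,\ell]}^2=0$, or equivalently because $\partial_{\omega_0[1,\ell]}$ annihilates polynomials of too small total degree. Tracking all the signs — from $(-1)^r$ and $(-1)^s$, from $(-1)^{(\ell-r)(\ell-r-1)/2}$ in the normalization of $x'_{[r,\ell]}$, from $(-1)^{\ell(\ell-1)/2}$ in $e'_\ell$, and from $(-1)^{l(w)}$ in the $\tau_{\omega_0}$-identities — is the one delicate part; once aligned, both sides coincide.
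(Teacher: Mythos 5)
Your expansion of $T_{k,\ell}G_{k,\ell}$ is correct, but the central collapsing step does not work as described, because the indices are ``crossed'' between the two tensor factors. In the double sum, the term indexed by $(s,r)$ has left factor $e_k\,\epsilon_{k-1-s}(x_2,\ldots,x_k)\,x_1^{r}\,x_{[2,k]}\tau_{\omega_0[1,k]}$ in $H_k$ and right factor $\epsilon_{\ell-1-r}(x_2,\ldots,x_\ell)\,e'_\ell\,\tau_{\omega_0[1,\ell]}\,x_1^{s}\,x'_{[2,\ell]}$ in $H_\ell$: the power $x_1^{r}$ that would have to pair with $\epsilon_{\ell-1-r}$ in your Vieta/Bezout identity lives in the \emph{other} tensor factor, and the only $x_1$-power in the right factor is $x_1^{s}$. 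So the $r$-sum cannot collapse to $(-1)^{\ell-1}\prod_{i=2}^{\ell}(x_1-x_i)$ inside the right factor, and likewise the later step where you ``incorporate the $x_1^{s}$ carried over from the right'' into the left factor is not legitimate: elements of $H_k\otimes H_\ell^{\mathrm{op}}$ cannot be moved between the two tensor factors. As written, the key mechanism of your proof fails; the sign and range bookkeeping you flag as the delicate point is not where the difficulty lies.

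The way to tie the two indices together is to evaluate one tensor factor completely. In $H_k$ one has, using $e_k=x_{[1,k]}\tau_{\omega_0[1,k]}$ and the standard identity $\tau_{\omega_0}P\tau_{\omega_0}=\partial_{\omega_0}(P)\tau_{\omega_0}$,
\[
e_k\,\epsilon_{k-1-s}(x_2,\ldots,x_k)\,x_1^{r}\,x_{[2,k]}\tau_{\omega_0[1,k]}
=\partial_{k-1}\cdots\partial_1\!\left(\epsilon_{k-1-s}(x_2,\ldots,x_k)\,x_1^{r}\right)e_k
=(-1)^{r}\,\delta_{r,s}\,e_k ,
\]
the Kronecker delta coming from a degree count on the Demazure operators. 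This collapses the double sum to a single sum over $u=r=s$ (in particular it automatically kills the range-mismatch terms you worry about), and one is left with showing, entirely inside $H_\ell$, that $\sum_{u=0}^{\ell-1}(-1)^{u}\epsilon_{\ell-1-u}(x_2,\ldots,x_\ell)\,\tau_{\ell-1}\cdots\tau_1\,x_1^{u}\,e'_{[2,\ell]}=e'_{[2,\ell]}$, which is checked in the polynomial representation; it is only at this stage, within a single factor, that an identity of the Vieta type you invoke can legitimately be used. The case $k\leqslant\ell$ is symmetric, evaluating the $H_\ell^{\mathrm{op}}$ factor instead.
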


	\begin{proof}
		We prove the result in the case $\ell \leqslant k$, the other one being similar. We have
		\begin{align*}
			T_{k,\ell}G_{k,\ell} &= \sum_{\substack{0\leqslant r \leqslant k-1 \\ 0\leqslant u \leqslant \ell -1}} (-1)^{r+u} e_k \epsilon_{k-1-r}(x_2,\ldots,x_k) x_1^ux_{[2,k]}\tau_{\omega_0[1,k]} \otimes \epsilon_{\ell-1-u}(x_2,\ldots,x_{\ell})e'_{\ell} \tau_{\omega_0[1,\ell]}x_1^rx'_{[2,\ell]}.
		\end{align*}
		In $H_k$ we have
		\[
			e_k \epsilon_{k-1-r}(x_2,\ldots,x_k) x_1^ux_{[2,k]}\tau_{\omega_0[1,k]} = \partial_{k-1}\ldots\partial_1\left(\epsilon_{k-1-r}(x_2,\ldots,x_k) x_1^u \right) e_k = (-1)^{u}\delta_{u,r}e_k.
		\]
		Hence
		\begin{align*}
			T_{k,\ell}G_{k,\ell} &= \sum_{u=0}^{\ell-1} (-1)^{u} e_k\otimes \epsilon_{\ell-1-u}(x_2,\ldots,x_{\ell})e'_{\ell} \tau_{\omega_0[1,\ell]}x_1^ux'_{[2,\ell]} \\
			&=\sum_{u=0}^{\ell-1} (-1)^{u} e_k\otimes \epsilon_{\ell-1-u}(x_2,\ldots,x_{\ell})\tau_{\ell-1}\ldots\tau_1x_1^ue'_{[2,\ell]}.
		\end{align*}
		Using the polynomial representation, it is easy to check that
		\[
			\sum_{u=0}^{\ell-1} (-1)^u \epsilon_{\ell-1-u}(x_2,\ldots,x_{\ell})\tau_{\ell-1}\ldots\tau_1x_1^ue'_{[2,\ell]} = e'_{[2,\ell]},
		\]
		and the result follows.
	\end{proof}

	\begin{cor}\label{surjinj}
		If $\lambda \geqslant 0$, then $T_{\lambda}G_{\lambda} = \mathrm{id}_{\Theta E1_{\lambda}}$. If $\lambda \leqslant 0$, then $G_{\lambda}T_{\lambda} = \mathrm{id}_{F\Theta1_{\lambda}[-1]}$.
	\end{cor}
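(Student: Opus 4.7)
The proof should be a direct application of the preceding lemma, carried out componentwise; the only real content is matching up the indexing so that the lemma's output is recognised as the identity on each term. My plan is to identify, on the $r$-th cohomological component, which pair $(k,\ell)$ is in play, and then read off the required identity from the lemma.

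Concretely, the $r$-th component of $\Theta E1_{\lambda}$ is $q^{-r}F^{(\lambda+r+2)}E^{(r)}E1_{\lambda}$, and by construction the restriction of $G_{\lambda}$ to it is $G_{k,\ell}$ with $k=\lambda+r+2$ and $\ell=r+1$; the target $q^{\lambda+1-r}FF^{(\lambda+r+1)}E^{(r+1)}1_{\lambda}$ is exactly of the shape $q^{k-\ell}FF^{(k-1)}E^{(\ell)}$ appearing in the definition of $G_{k,\ell}$. The same identification works for $T_{\lambda}$ on the corresponding component of $F\Theta1_{\lambda}[-1]$. Thus $T_{\lambda}G_{\lambda}$ (resp.\ $G_{\lambda}T_{\lambda}$) decomposes as a direct sum over $r$ of the local composites $T_{k,\ell}G_{k,\ell}$ (resp.\ $G_{k,\ell}T_{k,\ell}$).

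Now apply the preceding lemma. If $\lambda\geqslant 0$ then on every non-zero component $k-\ell=\lambda+1\geqslant 1$, so $\ell\leqslant k$ and the lemma gives $T_{k,\ell}G_{k,\ell}=e_k\otimes e'_{[2,\ell]}$; this element is, by definition of the divided powers via the idempotents $e_n$ and $e'_n$, precisely the identity endomorphism of the summand $F^{(k)}E^{(\ell-1)}E$ of $F^{k}E^{\ell}$. Summing over $r$ yields $T_{\lambda}G_{\lambda}=\mathrm{id}_{\Theta E1_{\lambda}}$. The case $\lambda\leqslant 0$ is symmetric: now $k\leqslant \ell$ on every non-zero component of $F\Theta1_{\lambda}[-1]$, the lemma supplies $G_{k,\ell}T_{k,\ell}=e_{[2,k]}\otimes e'_{\ell}$, which is the identity on $FF^{(k-1)}E^{(\ell)}$, and summing over $r$ gives $G_{\lambda}T_{\lambda}=\mathrm{id}_{F\Theta1_{\lambda}[-1]}$.

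There is no substantive obstacle here — the work has all been done in the preceding lemma. The only care needed is bookkeeping: verifying that the local restriction of $G_{\lambda}$ and $T_{\lambda}$ really is $G_{k,\ell}$ and $T_{k,\ell}$ with the expected $(k,\ell)$ (this is immediate from the definition of the morphism $G_{\lambda}$ via Lemma \ref{morphcomp}), and that the idempotents $e_k\otimes e'_{[2,\ell]}$ and $e_{[2,k]}\otimes e'_{\ell}$ are genuinely the identities on the corresponding summands $F^{(k)}E^{(\ell-1)}E$ and $FF^{(k-1)}E^{(\ell)}$ of $F^{k}E^{\ell}$. Once this is noted, no homotopy correction is required: the identities hold strictly, not merely up to homotopy.
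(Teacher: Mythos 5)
Your reduction is exactly the intended one (the paper gives no separate argument for this corollary): the composite $T_\lambda G_\lambda$, resp.\ $G_\lambda T_\lambda$, is computed degreewise, on the $r$-th component it equals $T_{k,\ell}G_{k,\ell}$, resp.\ $G_{k,\ell}T_{k,\ell}$, with $k=\lambda+r+2$, $\ell=r+1$, and the idempotents $e_k\otimes e'_{[2,\ell]}$ and $e_{[2,k]}\otimes e'_{\ell}$ are by construction the identity of the summands $F^{(k)}E^{(\ell-1)}E$ and $FF^{(k-1)}E^{(\ell)}$ of $F^kE^\ell$. For the first statement your bookkeeping is correct: $k-\ell=\lambda+1\geqslant 1$ whenever $\lambda\geqslant 0$ (in fact $\lambda\geqslant -1$ suffices), so the preceding lemma applies on every component of $\Theta E1_\lambda$.

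The second case is not symmetric in the way you assert, and this is a genuine off-by-one. Since $k-\ell=\lambda+1$ on every component, the lemma's hypothesis $k\leqslant\ell$ holds precisely when $\lambda\leqslant -1$, not $\lambda\leqslant 0$; at $\lambda=0$ you have $k=\ell+1$ on every component, so the lemma gives you nothing, and in fact $G_0T_0\neq\mathrm{id}$: the complex $q^{2}F\Theta1_0[-1]$ has the nonzero term $q^{2}F1_0$ in homological degree $-1$ where $\Theta E1_0$ vanishes (equivalently, in degree $r\geqslant 0$ one has $F^{(r+2)}E^{(r)}E\simeq[r+1]F^{(r+2)}E^{(r+1)}$ against $FF^{(r+1)}E^{(r+1)}\simeq[r+2]F^{(r+2)}E^{(r+1)}$, so no split surjection onto the target exists). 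So the statement itself is misstated at $\lambda=0$; the correct ranges are $\lambda\geqslant -1$ for $T_\lambda G_\lambda=\mathrm{id}_{\Theta E1_\lambda}$ and $\lambda\leqslant -1$ for $G_\lambda T_\lambda=\mathrm{id}_{F\Theta1_\lambda[-1]}$. These still cover every weight, which is all that the application needs (in the proof of Theorem \ref{thetae} one only uses that, for each $\lambda$, one of the two composites is the identity, so that $H^{\ell}(\Phi_n(G_\lambda))$ is split injective or split surjective). Your write-up should either restrict the second case to $\lambda\leqslant -1$ or flag the discrepancy, rather than claiming $k\leqslant\ell$ holds for all $\lambda\leqslant 0$.
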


	\begin{thm}\label{thetae}
		For all $\lambda \in \mathbb{Z}$, $G_{\lambda} : \Theta E1_{\lambda} \rightarrow q^{\lambda+2} F\Theta1_{\lambda}[-1]$ is a homotopy equivalence.
	\end{thm}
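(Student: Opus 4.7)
The plan is to invoke Theorem \ref{liftder}(\ref{lifthe}). First I would verify the boundedness hypothesis: for any object $M$ of an integrable 2-representation we have $E^i(M)=F^i(M)=0$ for $i$ large enough, so both $\Theta E(M)$ and $F\Theta(M)[-1]$ are bounded complexes. It therefore suffices to prove that $\Phi_n(G_\lambda)$ is a quasi-isomorphism for every $n \in \mathbb{N}$.

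Fix $n \in \mathbb{N}$. In $\cL(n)\mathrm{-bim}$, the functors $E$ and $F$ act by tensoring with bimodules that are flat on the appropriate side, hence are exact and commute with cohomology. Combined with Corollary \ref{top}, this implies that both $\Phi_n(\Theta E 1_\lambda)$ and $\Phi_n(q^{\lambda+2}F\Theta 1_\lambda[-1])$ have cohomology concentrated in a single common degree $d$ (equal to $n-k-1$ when $\lambda=-n+2k$; in all other cases both sides vanish). Consequently $\Phi_n(G_\lambda)$ is a quasi-isomorphism if and only if the induced map $H^d(\Phi_n(G_\lambda))$ is an isomorphism of bimodules.

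By Corollary \ref{surjinj}, $G_\lambda$ admits $T_\lambda$ as a one-sided inverse on the nose: $T_\lambda G_\lambda=\mathrm{id}$ when $\lambda\geq 0$ and $G_\lambda T_\lambda=\mathrm{id}$ when $\lambda\leq 0$. Applying $H^d\circ \Phi_n$ then shows that $H^d(\Phi_n(G_\lambda))$ is either a split monomorphism or a split epimorphism of $(H_{n-k-1,n},H_{k,n})$-bimodules. To promote this to an isomorphism, I would use Theorem \ref{coho}, which identifies, up to Morita equivalence, the top cohomology of each $\Phi_n(\Theta 1_{\lambda'})$ with a $\sigma$-twisted shift of a polynomial invariant ring. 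From this explicit description one computes the graded dimensions of the source and the target of $H^d(\Phi_n(G_\lambda))$ and checks that they agree---this is essentially the categorification of the quantum Weyl relation $se=-q^{\lambda+2}fs$. A split mono or epi between graded bimodules of equal graded dimension is automatically an isomorphism, which completes the proof.

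The main technical point is this graded-dimension comparison. The computation is direct but requires careful bookkeeping of all the shifts involved: those in the definition of $\Theta^r 1_\lambda$, the internal shift $q^{2k-n-1}$ in the action of $F$ on $\cL(n)$, and the external factor $q^{\lambda+2}$ in the statement. An alternative route would be to verify that the endomorphism ring of the top cohomology bimodule is local, so that the idempotent $H^d(\Phi_n(G_\lambda T_\lambda))$ or $H^d(\Phi_n(T_\lambda G_\lambda))$ arising from the one-sided relation is forced to be the identity; but this appears to require essentially the same bimodule analysis, so I would favor the direct dimension comparison route.
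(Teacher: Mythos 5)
Your proposal follows essentially the same route as the paper: reduce via Theorem \ref{liftder} to checking that $\Phi_n(G_\lambda)$ is a quasi-isomorphism, use exactness of $E,F$ together with Corollary \ref{top} and Theorem \ref{coho} to see both sides have cohomology concentrated in degree $n-k-1$, use Corollary \ref{surjinj} to get a split injection or surjection on cohomology, and conclude by equality of graded dimensions of locally finite bimodules. The only cosmetic difference is that the paper establishes the dimension equality by exhibiting an explicit bimodule isomorphism $s_{n-k-1}\cdots s_1\sigma_k$ between the two invariant polynomial rings, rather than by the direct bookkeeping you propose.
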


	\begin{proof}
		If $M$ is an object of an integrable 2-representation of $\U$, the complexes $\Theta E(M)$ and $F\Theta(M)$ are bounded. Thus by Theorem \ref{liftder} it suffices to prove that for all $n \in \mathbb{N}$, $\Phi_n(G_{\lambda})$ is a quasi-isomorphism.
		
		Let $n\in \mathbb{N}$ and $k \in \lbrace 0,\ldots,n\rbrace$. There is an isomorphism of $\left(P_n^{\s_{k+1}\times\s_{n-k-1}},P_n^{\s_k\times\s_{n-k}}\right)$-bimodules
		\[
			e_{[1,k+1]}\Phi_n\left( E1_{-n+2k}\right)e'_{[1,k]} \simeq P_n^{\s_{[1,k]}\times\s_{[k+2,n]}}.
		\]
		Hence by Corollary \ref{top} and Proposition \ref{coho} there is an isomorphism of $\left(P_n^{\s_{n-k-1}\times\s_{k+1}},P_n^{\s_k\times\s_{n-k}}\right)$-bimodules
		\[
			e_{[1,n-k-1]} H^{\ell} \left( \Phi_n \left( \Theta E1_{-n+2k} \right) \right) e'_{[1,k]} = \left \{ \begin{array}{cl}
			P_n^{\s_{[1,k]}\times\s_{[k+2,n]}} & \text{ if } \ell = n-k-1, \\
			0 & \text{ otherwise},
			\end{array} \right.
		\]
		where the left action is given by $\sigma_{k+1}^{-1}$ in the first case. Similarly, there is an isomorphism
		\[
			e_{[1,n-k-1]} H^{\ell} \left( \Phi_n \left( F\Theta1_{-n+2k} \right) \right) e'_{[1,k]} = \left \{ \begin{array}{cl}
			P_n^{\s_{[1,n-k-1]}\times\s_{[n-k+1,n]}} & \text{ if } \ell = n-k, \\
			0 & \text{ otherwise},
			\end{array} \right.
		\]
		where the right action is given by $\sigma_k$ in the first case. Now remark that there is an isomorphism of $\left(P_n^{\s_{n-k-1}\times\s_{k+1}},P_n^{\s_k\times\s_{n-k}}\right)$-bimodules
		\[
			s_{n-k-1}\ldots s_1\sigma_k : P_n^{\s_{[1,k]}\times\s_{[k+2,n]}} \xrightarrow{\sim} P_n^{\s_{[1,n-k-1]}\times\s_{[n-k+1,n]}}.
		\]
		Hence the cohomology bimodules of $\Phi_n\left(\Theta E1_{\lambda}\right)$ and $\Phi_n\left(F\Theta 1_{\lambda}[-1]\right)$ are isomorphic. Furthermore by Lemma \ref{surjinj}, for all $\ell$, $H^{\ell}\left(\Phi_n(G_{\lambda})\right)$ is either a split surjection or a split injection. Since the source and target of $H^{\ell}\left(\Phi_n(G_{\lambda})\right)$ are locally finite and have the same graded dimension, we conclude that $H^{\ell}(\Phi_n(G_{\lambda}))$ is an isomorphism. Hence $\Phi_n(G_{\lambda})$ is a quasi-isomorphism, and the proof is complete.
	\end{proof}

	\bigskip
	
	\bibliographystyle{alpha}
	\bibliography{biblio}
	
	\bigskip
	
	{\small \textsc{UCLA Mathematics Department, Box 951555, Los Angeles, CA 90095-1555}}
	
	\textit{E-mail address:} \href{mailto:lvera@math.ucla.edu}{\texttt{lvera@math.ucla.edu}}

%\nocite{*}
%\bibliographystyle{alpha}

%\bibliography{biblio}
	
\end{document}